\documentclass[10pt,reqno]{amsart}

\usepackage{amsmath}
\usepackage{amsthm}
\usepackage{amssymb}
\usepackage{amsfonts}
\usepackage{amsxtra}
\usepackage{fullpage}
\usepackage{amscd}
\usepackage{color} 
\usepackage{bm} 
\usepackage{mathrsfs}
\usepackage{subfigure}
\usepackage{tikz}
\usepackage{tikz-cd}
\usetikzlibrary{cd}
\usepackage{enumerate}
\usepackage[all]{xy}
\usepackage[mathscr]{eucal}
\usepackage{verbatim}
 \usetikzlibrary{shapes.geometric, calc}
 \usepackage{enumitem}
\usetikzlibrary{arrows,decorations.pathmorphing,decorations.pathreplacing,positioning,shapes.geometric,shapes.misc,decorations.markings,decorations.fractals,calc,patterns}

\usepackage{mathtools} 
\usepackage{microtype} 
\usepackage{hyperref}
 \hypersetup{colorlinks,linkcolor=[rgb]{0,0,1},citecolor=[rgb]{1,0,0}}

\usetikzlibrary{chains}

 \tikzset{>=stealth',
        cvertex/.style={circle,draw=black,inner sep=1pt,outer sep=3pt},
        vertex/.style={circle,fill=black,inner sep=1pt,outer sep=3pt},
        star/.style={circle,fill=yellow,inner sep=0.75pt,outer sep=0.75pt},
        tvertex/.style={inner sep=1pt,font=\scriptsize},
        gap/.style={inner sep=0.5pt,fill=white}}
        
\tikzset{node distance=2em, ch/.style={circle,draw,on chain,inner sep=2pt},chj/.style={ch,join},every path/.style={shorten >=4pt,shorten <=4pt},line width=1pt,baseline=-1ex}

\newcommand{\dlabel}[1]{%
	\({{\scriptstyle{ #1}}}\)
}
\newcommand{\dnode}[2][chj]{%
	\node[#1,label={below:\dlabel{ #2}}] {};
}
\newcommand{\dydots}{%
	\node[chj,draw=none,inner sep=1pt] {\dots};
}
\newcommand{\updots}{%
	\node[chj,draw=none,inner sep=1pt] {\vdots};
}
\newcommand{\dnodebr}[1]{%
	\node[chj,label={below right:\dlabel{#1}}] {};
}
\newcommand{\dnodeu}[1]{%
	\node[chj,label={above:\dlabel{#1}}] {};
}

\newtheorem{thm}{Theorem}
\newtheorem{prop}[thm]{Proposition}
\newtheorem{cor}[thm]{Corollary}
\newtheorem{lem}[thm]{Lemma}
\theoremstyle{definition}

\newtheorem{example}[thm]{Example}

\newtheorem{remark}[thm]{Remark}
\newtheorem{remarks}[thm]{Remarks}
\numberwithin{thm}{section}

 \numberwithin{equation}{section} 

\newcommand{\CC}{\mathbb{C}}
\newcommand{\ZZ}{\mathbb{Z}}

\DeclareMathOperator{\End}{\mathrm{End}}
\newcommand{\git}{\ensuremath{/\!\!/\!}}
\DeclareMathOperator{\GL}{\mathrm{GL}}
\DeclareMathOperator{\head}{\mathrm{h}}
\DeclareMathOperator{\Hilb}{{\mathrm{Hilb}}}
\DeclareMathOperator{\Hom}{\mathrm{Hom}}
\DeclareMathOperator{\im}{\mathrm{Im}}
\DeclareMathOperator{\rank}{\mathrm{rank}}
\DeclareMathOperator{\module}{\mathrm{-mod}}
\DeclareMathOperator{\Proj}{\mathrm{Proj}}
\DeclareMathOperator{\Rep}{\mathrm{Rep}}
\DeclareMathOperator{\SL}{\mathrm{SL}}
\DeclareMathOperator{\Sp}{\mathrm{Sp}}
\DeclareMathOperator{\Spec}{\mathrm{Spec}}
\DeclareMathOperator{\Sym}{\mathrm{Sym}}
\DeclareMathOperator{\tail}{\mathrm{t}}

\newcommand{\one}{\ensuremath{(\mathrm{i})}}
\newcommand{\two}{\ensuremath{(\mathrm{ii})}}
\newcommand{\three}{\ensuremath{(\mathrm{iii})}}

\setcounter{tocdepth}{1}

\title{Punctual Hilbert schemes for Kleinian singularities \\ as quiver varieties}

\author{Alastair Craw}
\address{Department of Mathematical Sciences, 
University of Bath, 
Claverton Down, 
Bath BA2 7AY, 
United Kingdom}
\email{a.craw@bath.ac.uk}

\author{S{\o}ren Gammelgaard} 
\address{Mathematical Institute, 
University of Oxford, 
Oxford OX2 6GG, 
United Kingdom}
\email{gammelgaard@maths.ox.ac.uk /  szendroi@maths.ox.ac.uk}

\author{\'Ad\'am Gyenge}
\address{Alfr\'ed R\'enyi Institute of Mathematics, Re\'altanoda utca 13-15, 1053, Budapest, Hungary}
\email{Gyenge.Adam@renyi.hu}

\author{Bal\'{a}zs Szendr\H{o}i}

\subjclass[2010]{Primary 16G20; Secondary 14C05, 14E16, 14B05}
\keywords{Hilbert scheme of points, quiver variety, Kleinian singularity, preprojective algebra, cornering}

\begin{document}
\begin{abstract}
 For a finite subgroup $\Gamma\subset \SL(2,\CC)$ and $n\geq 1$, we construct the (reduced scheme underlying the) Hilbert scheme of~$n$  points on the Kleinian singularity $\CC^2\!/\Gamma$ as a Nakajima quiver variety for the framed McKay quiver of $\Gamma$, taken at a specific non-generic stability parameter. We deduce that 
 this Hilbert scheme is irreducible (a result previously due to Zheng), normal, and admits a unique symplectic resolution. More generally, we introduce a class of algebras obtained from the preprojective algebra of the framed McKay quiver by removing an arrow and then `cornering',
 and we show that fine moduli spaces of cyclic modules over these new algebras are isomorphic to quiver varieties for the framed McKay quiver and certain non-generic choices of stability parameter.  
\end{abstract}
\maketitle
\tableofcontents
\section{Introduction}
Let $\Gamma\subset \SL(2,\CC)$ be a finite subgroup. One can associate various Hilbert schemes to the action of $\Gamma$ on the affine plane $\CC^2$ and the Kleinian singularity $\CC^2/\Gamma$. 
For $N\coloneqq \vert \Gamma\vert$ and any natural number $n$, the action of $\Gamma$ on $\CC^2$ induces an action of $\Gamma$ on the Hilbert scheme $\Hilb^{[nN]}(\CC^2)$ of $nN$ points on the affine plane. The scheme $n\Gamma\text{-}\Hilb(\CC^2)$, parametrising $\Gamma$-invariant ideals $I$ in
$\CC[x,y]$
such that the quotient $\CC[x,y]/I$ is isomorphic to the direct sum of $n$ copies of the regular representation of $\Gamma$, is a union of components of the fixed point set of the $\Gamma$-action on $\Hilb^{[nN]}(\CC^2)$. It is thus nonsingular and quasi-projective. 
One may also consider 
the Hilbert scheme of $n$ points $\Hilb^{[n]}(\CC^2/\Gamma)$ on the singular surface $\CC^2/\Gamma$, parametrising ideals 
in the invariant ring $\CC[x,y]^\Gamma$ that have codimension $n$.
This Hilbert scheme is quasi-projective, 
 and in this introduction we endow it with the {\em reduced} scheme structure.

These two kinds of Hilbert schemes are related by the morphism
\begin{equation}\label{eg:equivariant_HC}
 n\Gamma\text{-}\Hilb(\CC^2)\longrightarrow \Hilb^{[n]}(\mathbb{C}^2/\Gamma)
 \end{equation} 
 sending a $\Gamma$-invariant ideal $I$ in $\CC[x,y]$ to the ideal $I\cap \CC[x,y]^\Gamma$; this set-theoretic map is indeed a morphism of schemes by Brion~\cite[3.4]{Brion13}. By composing with the Hilbert-Chow morphism of the surface $\CC^2/\Gamma$, we see that~\eqref{eg:equivariant_HC} is in fact a morphism of schemes over the affine scheme $\Sym^n(\CC^2/\Gamma)$.

Until recently, not much was known about the schemes $\Hilb^{[n]}(\mathbb{C}^2/\Gamma)$ for $n>1$. Gyenge, N\'emethi and Szendr\H oi~\cite{GNS} computed the generating function of their Euler characteristics for $\Gamma$ of type A and D (the cyclic and dihedral cases), giving an answer with modular properties. They also conjectured an analogous formula for type E. Zheng~\cite{Zheng17} proved that $\Hilb^{[n]}(\CC^2/\Gamma)$ is always irreducible, and gave a homological characterisation of its smooth points through a detailed 
analysis of Cohen-Macaulay modules over $\CC^2/\Gamma$. Yamagishi~\cite{Yamagishi17} studied symplectic resolutions of the Hilbert squares $\Hilb^{[2]}(\CC^2/\Gamma)$, and described completely the central fibres of these resolutions, from which he deduced that $\Hilb^{[2]}(\CC^2/\Gamma)$ admits a unique symplectic resolution.

 The aim of our paper is to study the spaces appearing in~\eqref{eg:equivariant_HC}, and
all possible ways in which the morphism from \eqref{eg:equivariant_HC} can be decomposed, 
using quiver-theoretic techniques in a uniform way. 
The starting point is the McKay correspondence, which associates a quiver (oriented graph) to the subgroup $\Gamma\subset \SL(2,\CC)$. Representation spaces of a framed variant of the McKay quiver, each depending on a stability parameter, were introduced in Kronheimer and Nakajima~\cite{KN90} and studied further by Nakajima~\cite{Nakajima94}.
Subsequently, for any $n\geq 1$ and for a special choice of framing, Kuznetsov~\cite{Kuznetsov07} determined a pair of cones $C_\pm$ in the space of stability parameters for which the corresponding representation space $\mathfrak{M}_\theta$ is isomorphic to the punctual Hilbert scheme $\Hilb^{[n]}(S)$ of the minimal resolution $S$ of $\CC^2/\Gamma$ for $\theta\in C_-$, and to the scheme $n\Gamma\text{-}\Hilb(\CC^2)$ from \eqref{eg:equivariant_HC} for $\theta\in C_+$, respectively. Much more recently, Bellamy and Craw~\cite{BC18} gave a complete description of the wall-and-chamber structure on the space of stability parameters in this situation, and identified a simplicial cone $F$ containing $C_\pm$ that is isomorphic as a fan to the movable cone of $n\Gamma\text{-}\Hilb(\CC^2)$ for $n>1$; in particular, chambers in this simplicial cone correspond one-to-one with projective, symplectic resolutions of $\Sym^n(\CC^2/\Gamma)$ (see Figure~\ref{fig:A2n=3} below for an example). 

The main result of our paper reconstructs the morphism from \eqref{eg:equivariant_HC} by variation of GIT quotient. Explicitly, we vary a generic stability parameter $\theta\in C_+$ to a parameter $\theta_0$ in a particular extremal ray of the closure of $C_+$; the induced morphism $\mathfrak{M}_\theta\to \mathfrak{M}_{\theta_0}$ coincides with the morphism \eqref{eg:equivariant_HC}. As a corollary, we obtain the following result.

\begin{thm}
\label{thm:mainintro}
Let $\Gamma\subset \SL(2,\mathbb{C})$ be a finite subgroup and let $n\geq 1$. The (reduced) Hilbert scheme $\Hilb^{[n]}(\CC^2/\Gamma)_{\rm red}$ is an irreducible, normal scheme 
with symplectic, hence rational Gorenstein, singularities. 
Furthermore, it admits a unique projective, symplectic resolution given by \eqref{eg:equivariant_HC}.
\end{thm}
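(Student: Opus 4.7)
The plan is to leverage the main theorem of the paper, which identifies the non-generic Nakajima quiver variety $\mathfrak{M}_{\theta_0}$ with $\Hilb^{[n]}(\CC^2/\Gamma)_{\mathrm{red}}$ and exhibits the morphism \eqref{eg:equivariant_HC} as the variation-of-GIT morphism $\mathfrak{M}_\theta \to \mathfrak{M}_{\theta_0}$ for any generic $\theta\in C_+$. Since $n\Gamma\text{-}\Hilb(\CC^2)\cong \mathfrak{M}_\theta$ is a smooth, connected Nakajima quiver variety at a generic stability parameter, and since \eqref{eg:equivariant_HC} is projective, each of the four claimed properties can be read off in turn from standard results on variation of GIT together with the wall-and-chamber structure of~\cite{BC18}.

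Irreducibility is essentially immediate: $\mathfrak{M}_\theta$ is smooth and connected, hence irreducible, and the GIT morphism to $\mathfrak{M}_{\theta_0}$ is surjective. For normality, I would invoke the standard criterion that if $\pi\colon X\to Y$ is a projective surjection with $X$ normal and $\pi_\ast \mathcal{O}_X = \mathcal{O}_Y$, then $Y$ is normal; here $X=\mathfrak{M}_\theta$ is smooth, and the required equality of structure sheaves is built into the variation-of-GIT formalism, since both source and target arise as $\Proj$ of semi-invariants on the common affine quotient $\mathfrak{M}_0$. To deduce symplectic singularities, one uses that the holomorphic symplectic form on the smooth quiver variety $\mathfrak{M}_\theta$ arising from the hyperk\"ahler construction extends across the exceptional locus of \eqref{eg:equivariant_HC}; by Beauville's criterion, this is exactly what is needed for $\mathfrak{M}_{\theta_0}$ to have symplectic singularities, which are automatically rational Gorenstein.

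Finally, for uniqueness of the projective symplectic resolution: by~\cite{BC18}, the chambers inside the simplicial cone $F$ biject with projective symplectic resolutions of $\Sym^n(\CC^2/\Gamma)$, and every projective symplectic resolution of $\mathfrak{M}_{\theta_0}$ must factor through one of these. The key step is to verify that, among all chambers in $F$ whose closures contain the extremal ray spanned by $\theta_0$, only $C_+$ produces $\Hilb^{[n]}(\CC^2/\Gamma)_{\mathrm{red}}$ as its GIT quotient at $\theta_0$: crossing any other wall of $\overline{C_+}$ would realise $\theta_0$ as a limit of parameters in a different chamber, yielding a resolution of a distinct partial contraction of $\Sym^n(\CC^2/\Gamma)$ rather than of the Hilbert scheme.

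The main obstacle in the whole argument is precisely this last wall-crossing analysis: one must pin down which walls of $F$ pass through the ray spanned by $\theta_0$, and verify that $C_+$ is the unique chamber of $F$ whose closure contains this ray and whose associated GIT degeneration at $\theta_0$ is $\Hilb^{[n]}(\CC^2/\Gamma)_{\mathrm{red}}$. By contrast, irreducibility, normality and the existence of symplectic singularities follow almost formally once the VGIT identification of \eqref{eg:equivariant_HC} with $\mathfrak{M}_\theta \to \mathfrak{M}_{\theta_0}$ is in hand.
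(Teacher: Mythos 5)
Your high-level strategy matches the paper's: identify $\Hilb^{[n]}(\CC^2/\Gamma)_{\rm red}$ with the quiver variety $\mathfrak{M}_{\theta_0}$, recognise \eqref{eg:equivariant_HC} as the VGIT morphism $\mathfrak{M}_\theta\to\mathfrak{M}_{\theta_0}$ for $\theta\in C_+$, and read off uniqueness of the resolution from the wall-and-chamber structure of~\cite{BC18}. Where you diverge is in how you obtain irreducibility, normality and symplectic singularities. The paper does not prove these by hand from the resolution $\pi_J\colon\mathfrak{M}_\theta\to\mathfrak{M}_{\theta_0}$; it simply invokes the blanket result of Bellamy--Schedler~\cite{BS16} (recorded here as Lemma~\ref{lem:Mthetageometry}), which says that for \emph{every} $\theta\in\Theta_v$ the reduced quiver variety $\mathfrak{M}_\theta$ is irreducible, normal, with symplectic singularities. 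Your route is genuinely different: you want to deduce these properties from the smooth, connected resolution $\mathfrak{M}_\theta$ via $\pi_*\mathcal{O}=\mathcal{O}$ and Beauville's extension criterion. That buys some self-containedness, but it is not free of content. The claim that $\pi_*\mathcal{O}_{\mathfrak{M}_\theta}=\mathcal{O}_{\mathfrak{M}_{\theta_0}}$ is ``built into the variation-of-GIT formalism'' glosses over exactly the point at issue: it requires knowing that the contraction $\pi_J$ coincides with the morphism to $\Proj\bigoplus_k H^0(\mathfrak{M}_\theta,L_J^k)$, which is the content of Lemma~\ref{lem:ampleVGIT}, itself resting on~\cite[Theorem~1.2]{BC18}. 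Without this input, $\pi_*\mathcal{O}=\mathcal{O}$ is precisely equivalent to normality of the target, so the reasoning would be circular. Likewise, for symplectic singularities you need the form to descend to the whole smooth locus of $\mathfrak{M}_{\theta_0}$ (not just the open stable locus) before Beauville's criterion applies; that gap can be closed by a Hartogs argument if the singular locus has codimension at least two, but it is another nontrivial check that the citation of~\cite{BS16} simply absorbs.

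For uniqueness, your argument is the right one, and you correctly identify the crux: checking that the ray $\langle\theta_0\rangle$ lies in the closure of exactly one chamber of $F$. The paper settles this quickly by observing that this ray is $\rho_1^\perp\cap\dots\cap\rho_r^\perp$, which bounds only $C_+$. Your framing in terms of ``crossing a wall yielding a resolution of a distinct partial contraction'' is a looser version of the same idea; the cleaner statement is simply that a chamber $C$ contributes a symplectic resolution of $\mathfrak{M}_{\theta_0}$ if and only if $\langle\theta_0\rangle\subset\overline{C}$, and this holds only for $C=C_+$.
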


We reiterate that irreducibility is due originally to Zheng~\cite{Zheng17}. The existence of a nowhere-vanishing $2n$-form in the type $A$ case, which follows from having symplectic singularities, was shown in the same paper \cite[Theorem~D]{Zheng17}, while the existence and uniqueness of the symplectic resolution for $n=2$ is due to Yamagishi~\cite{Yamagishi17}.

 Our main tool is to furnish $\Hilb^{[n]}(\CC^2/\Gamma)$ with a quiver-theoretic interpretation as a fine quiver moduli space by the process of cornering~\cite{CIK18}.
 More generally, we provide a fine moduli space description of the quiver varieties $\mathcal{M}_\theta$ for all non-generic stability parameters that lie in the closure of the cone $C_+$. Our methods give conceptual proofs of the geometric properties of $\Hilb^{[n]}(\CC^2/\Gamma)$ listed in Theorem~\ref{thm:mainintro}, and allow us to obtain all possible projective factorisations of the morphism \eqref{eg:equivariant_HC} by universal properties of the resulting fine moduli spaces. Our proofs for all statements concerning $\Hilb^{[n]}(\CC^2/\Gamma)$ avoid case-by-case analysis with respect to the ADE classification of $\Gamma$. We use only one such case-by-case argument for our more general quiver varieties $\mathcal{M}_\theta$ to establish a bound on the dimension vector for quiver representations that are stable with respect to a non-generic stability condition; see Appendix~\ref{sec:the ugly proof}, sections~\ref{sec:remainingCases}--\ref{sec:generalcase}.

Quiver varieties with degenerate stability conditions identical to ours were considered before in~\cite{Nak_branching}. In very recent work, Nakajima~\cite{Nakajima20} uses the main result of our paper and some results from the representation theory of quantum affine algebras to prove the conjecture of~\cite{GNS}.

\smallskip

\noindent {\bf Acknowledgements.} While working on this project, S.G. was supported by an Aker Scholarship, whereas \'A.Gy. and B.Sz. were supported by 
EPSRC grant EP/R045038/1. We thank Gwyn Bellamy, Ben Davison and Hiraku Nakajima for helpful discussions. We are also grateful to the anonymous referee whose helpful comments enabled us to bypass a case-by-case analysis of ADE diagrams in the situation of primary interest where our quiver variety admits a morphism to $\Hilb^{[n]}(\CC^2/\Gamma)$, and who identified a gap in an earlier, more complicated construction of our fine moduli spaces, leading to a substantial simplification.

\smallskip

\noindent {\bf Notation. } Let $\pi\colon X\to Y$ be a projective morphism of schemes over an affine base $Y$. For a globally generated line bundle $L$ on $X$, write $\vert L\vert \coloneqq \Proj_Y\bigoplus_{k\geq 0} H^0(X,L^k)$
for the (relative) linear series of $L$, and 
$\varphi_{\vert L\vert}\colon X\to \vert L\vert$ for the induced morphism over $Y$.

\section{Variation of GIT quotient for quiver varieties}
 Let $\Gamma\subset \SL(2,\CC)$ be a finite subgroup. Let $V$ denote its given two-dimensional representation, defined by this inclusion. Write $\rho_0,\rho_1,\dots,\rho_r$ for the irreducible representations of $\Gamma$, with $\rho_0$ the trivial one. The \emph{McKay graph} of $\Gamma$ has vertex set $\{0, 1, \dots, r\}$ where vertex $i$ corresponds to the representation $\rho_i$ of $\Gamma$, and there are $\dim\Hom_\Gamma(\rho_j,\rho_i\otimes V)$ edges between vertices $i$ and $j$. By the McKay correspondence \cite{McKay80}, the McKay graph is an extended Dynkin diagram of $ADE$ type. Add a framing vertex $\infty$, together with an edge between vertices $\infty$ and $0$, and let $Q_1$ be the set of pairs consisting of an edge in this graph and an orientation of the edge. If $a$ is an edge with orientation, we write $ {a}^*$ for the same edge with the opposite orientation. The \emph{framed McKay quiver} $Q$ has vertex set $Q_0 = \{\infty, 0, 1, \dots, r\}$ and arrow set $Q_1$, where for each oriented edge $a\in Q_1$ we write $ \tail(a), \head(a) $ for the tail and head of $a$ respectively.
  
  Let $\CC Q$ denote the path algebra of $Q$. 
Let $\epsilon\colon Q_1\to \{\pm 1\}$ be any map such that $\epsilon(a) \neq \epsilon(a^*)$ for all $a\in Q_1$. The preprojective algebra $\Pi$ is the quotient of $\CC Q$ by the ideal generated by the relation 
   \[
   \sum_{a\in Q_1} \epsilon(a) aa^*.
   \]
    Equivalently, multiplying both sides of this relation by the vertex idempotent at vertex~$i$ shows that~$\Pi$ can be presented as the quotient of $\CC Q$ by the ideal
    \begin{equation}
        \label{eqn:Pirelations}
    \left( \sum_{\head(a) = i} \epsilon(a) aa^* \mid i\in Q_0\right).
     \end{equation}
     The preprojective algebra $\Pi$ does not depend on the choice of the map~$\epsilon$ \cite[Lemma 2.2]{CBH98}. 
 Let $R(\Gamma)$ denote the representation ring of $\Gamma$. Introduce a formal symbol $\rho_\infty$ so that  $\{\rho_i \mid i\in Q_0\}$ provides a $\ZZ$-basis for $\ZZ^{Q_0}\cong\ZZ\oplus R(\Gamma)$ considered as $\ZZ$-modules. 
 
 For a natural number $n \geq 1$ that we fix for the rest of the paper, consider the dimension vector 
 \[
 v\coloneqq(v_i)_{i\in Q_0} \coloneqq\rho_\infty + \sum_{i\geq 0} n\dim(\rho_i)\rho_i 
 \in \ZZ^{Q_0}.
 \]
 The group $G(v)\coloneqq \mathbb{C}^\times \times \prod_{0\leq i\leq r} \GL\big(n\dim(\rho_i),\CC\big)$ acts on the space $\Rep(Q,v)\coloneqq\bigoplus_{a\in Q_1} \Hom\big(\mathbb{C}^{v_{\tail(a)}},\mathbb{C}^{v_{\head(a)}}\big)$ of representations of the quiver $Q$ of dimension vector $v$ by conjugation. The diagonal scalar subgroup acts trivially, and the action of the quotient $G\coloneqq G(v)/\CC^\times$ induces a moment map $\mu\colon \Rep(Q,v)\to \mathfrak{g}^*$ such that a closed point lies in $\mu^{-1}(0)$ if and only if the corresponding $\CC Q$-module satisfies the relations \eqref{eqn:Pirelations} of the preprojective algebra $\Pi$. If we write 
 \[
 \Theta_v = \{\theta\colon \mathbb{Z}^{Q_0}\to \mathbb{Q} \mid \theta(v)=0\},
 \]
 then each character of $G$ is $\chi_\theta\colon G\to \CC^\times$ for some integer-valued $\theta\in \Theta_v$, where $\chi_\theta(g) = \prod_{i\in Q_0} \det(g_i)^{\theta_i}$ for $g\in G(v)$. 
 
Given a stability parameter $\theta\in \Theta_v$, recall that a $\Pi$-module $M$ is $\theta$-stable (respectively semistable) if $\theta(\dim M) = 0$ and 
for every proper, nonzero submodule $N\subset M$, we have $\theta(\dim N) > 0$ (respectively $\theta(\dim N) \ge 0$).  
Two $\theta$-semistable $\Pi$-modules $M, M'$ are said to be $S$-equivalent, if they admit filtrations
 \[0=M_0\subset M_1\subset \dots \subset M_{s_1} = M\qquad \textrm{and}\qquad 0=M_0'\subset M_1'\subset \dots\subset M_{s_2}' = M'\]
 such that each $M_i$ and each $M_j'$ is $\theta$-semistable, and 
 \[\bigoplus_{i=1}^{s_1} M_i/M_{i-1} \cong \bigoplus_{i=1}^{s_2} M_i'/M_{i-1}'.\]
 Every $S$-equivalence class has a representative unique up to isomorphism that is a direct sum of $\theta$-stable modules, the so-called \emph{polystable} module.

 Given $\theta\in \Theta_v$, the quiver variety 
 \[
 \mathfrak{M}_\theta\coloneqq (\mu^{-1}(0)\git_\theta G)_{\rm red}
 \]
 is the categorical quotient of the locus of $\chi_\theta$-semistable points of $\mu^{-1}(0)$ by the action of $G$. It is the coarse moduli space of S-equivalence classes of $\theta$-semistable $\Pi$-modules of dimension vector $v$. As indicated, we consider these GIT quotients with their reduced scheme structure everywhere below.
  
 \begin{lem}
 \label{lem:Mthetageometry}
 For all $\theta\in \Theta_v$, the scheme $\mathfrak{M}_\theta$ is irreducible and normal, with symplectic singularities.
 \end{lem}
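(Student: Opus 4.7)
The plan is to identify $\mathfrak{M}_\theta$ as a Nakajima quiver variety in standard form, with framing vector $w = \rho_\infty$ (equal to $1$ at vertex $\infty$ and $0$ elsewhere) and interior dimension vector $n\delta$, where $\delta = \sum_{i\geq 0}\dim(\rho_i)\rho_i$ is the minimal positive imaginary root of the affine Dynkin diagram underlying the McKay graph. All three assertions then follow from well-established structural results about Nakajima quiver varieties for framed McKay quivers.

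For irreducibility, I would appeal to Crawley-Boevey: for the framed McKay quiver with dimension vector $v = \rho_\infty + n\delta$, the vector lies in the appropriate fundamental region of the Kac-Moody root system of the doubled quiver, so the scheme-theoretic fibre $\mu^{-1}(0)\subseteq \Rep(Q,v)$ is an irreducible complete intersection of the expected dimension. Since $\mathfrak{M}_\theta$ is a categorical GIT quotient of an open subscheme of $\mu^{-1}(0)$ by the reductive group $G$, irreducibility descends, giving irreducibility of $\mathfrak{M}_\theta$ for every $\theta$.

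For the symplectic singularity assertion (which subsumes normality and the rational Gorenstein property), I would invoke the theorem of Bellamy-Schedler applied first to the affine quotient $\mathfrak{M}_0$: the numerical hypotheses of their theorem are satisfied for $(w, n\delta)$, so $\mathfrak{M}_0$ has symplectic singularities. For general $\theta$, the variation-of-GIT morphism $\mathfrak{M}_\theta\to \mathfrak{M}_0$ is a projective Poisson morphism, and Namikawa's theory ensures that a projective Poisson partial resolution of a variety with symplectic singularities itself has symplectic singularities. Normality then follows from the definition of symplectic singularities.

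The main obstacle is verifying that the specific numerical hypotheses of the cited results (the fundamental-region condition for Crawley-Boevey, and the conditions of Bellamy-Schedler) are satisfied for $(1, n\delta)$ in our setting. This is a standard check since the combination of a multiple of the minimal imaginary root with a single-vertex framing lies squarely within the scope of both results; the arguments are essentially independent of the specific ADE type of $\Gamma$.
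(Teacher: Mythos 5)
Your instinct to lean on structural results for Nakajima quiver varieties is the right one, and the paper does the same — but much more directly. The paper's proof is a one-line citation of Bellamy--Schedler \cite[Theorem~1.2, Proposition~3.21]{BS16}, which already establishes irreducibility, normality, and symplectic singularities of $\mathfrak{M}_\theta$ for \emph{all} stability parameters $\theta$, not just $\theta=0$. Your detour through $\mathfrak{M}_0$ followed by a Namikawa-style lifting along $\mathfrak{M}_\theta\to\mathfrak{M}_0$ is therefore unnecessary, and it introduces two genuine problems.

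First, there is a circularity in the symplectic-singularities step. The statement you want from Namikawa's work — that a projective crepant birational morphism $Y\to X$ onto a variety $X$ with symplectic singularities forces $Y$ to have symplectic singularities — already requires $Y$ to be normal as a hypothesis (normality is built into the definition of a symplectic singularity). So you cannot then claim ``normality follows from the definition of symplectic singularities''; you need an independent argument that $\mathfrak{M}_\theta$ is normal before you can even state that it has symplectic singularities. You also need to know the morphism is crepant, which you assert as Poisson-ness but do not justify.

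Second, the fundamental-region argument for irreducibility is not correct for $n=1$. With $v=\rho_\infty+n\delta$, the symmetrised pairing at the framing vertex is $(v,e_\infty)=2v_\infty-v_0=2-n$, which is strictly positive when $n=1$, so $v$ lies outside the fundamental region in that case. Since the paper treats all $n\geq 1$, this leaves a gap. Crawley-Boevey's irreducibility criterion can still be made to apply with a separate check, but it is not the ``standard'' verification you describe. All of this is avoided by citing \cite{BS16} for general $\theta$ as the paper does.
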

 \begin{proof}
See Bellamy and Schedler~\cite[Therorem 1.2, Proposition~3.21]{BS16}.
 \end{proof}
 
 The set of stability conditions $\Theta_v$ admits a preorder $\ge$,
 where $\theta\geq \theta'$ iff every $\theta$-semistable $\Pi$-module is $\theta'$-semistable. It is well known \cite{DH98, Thaddeus96} that we obtain a wall-and-chamber structure on $\Theta_v$, where $\theta, \theta'\in \Theta_v$ lie in the relative interior of the same cone if and only if
 both $\theta\ge\theta^\prime$ and $\theta^\prime\ge \theta$ hold in this preorder,
 in which case $\mathfrak{M}_\theta\cong \mathfrak{M}_{\theta'}$. The interiors of the top-dimensional cones in $\Theta_v$ are \emph{GIT chambers}, while the codimension-one faces of the closure of each GIT chamber are \emph{GIT walls}. We say that $\theta\in \Theta_v$ is \emph{generic} with respect to $v$, if it lies in some GIT chamber; equivalently, $\theta$ is generic if every $\theta$-semistable $\Pi$-module is $\theta$-stable. Since $v$ is indivisible,  King~\cite[Proposition~5.3]{King94} proves that for generic $\theta\in \Theta_v$, the quiver variety $\mathfrak{M}_\theta$ is the fine moduli space of isomorphism classes of $\theta$-stable $\Pi$-modules of dimension vector~$v$. In this case, the universal family on $\mathfrak{M}_\theta$ is a tautological locally-free sheaf
 \[
 \mathcal{R}\coloneqq\bigoplus_{i\in Q_0} \mathcal{R}_i
 \]
 together with a $\CC$-algebra homomorphism $\phi\colon \Pi\to \End(\mathcal{R})$,  where $\mathcal{R}_\infty$ is the trivial bundle on $\mathfrak{M}_\theta$ and where $\rank(\mathcal{R}_i)=n\dim(\rho_i)$ for $i\geq 0$, 
 
  Variation of GIT quotient for the quiver varieties $\mathfrak{M}_\theta$ was investigated recently by the first author with Bellamy~\cite{BC18}. The following result records a surjectivity statement that will be useful later on.
 
  \begin{lem}
  \label{lem:VGIT}
  Let $\theta, \theta'\in \Theta_v$ satisfy $\theta\geq \theta^\prime$. Then the morphism $\pi\colon \mathfrak{M}_\theta\to \mathfrak{M}_{\theta'}$ obtained by variation of GIT quotient is a surjective, projective and birational morphism of varieties over $\Sym^n(\mathbb{C}^{2}/\Gamma)$. 
  \end{lem}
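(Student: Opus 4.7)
My plan is to handle the three properties in turn, with birationality being the main content via reduction to a generic auxiliary parameter.

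For projectivity, the hypothesis $\theta\geq \theta^\prime$ yields the inclusion $\mu^{-1}(0)^{\chi_\theta\text{-ss}}\subseteq \mu^{-1}(0)^{\chi_{\theta^\prime}\text{-ss}}$ of semistable loci, which descends by variation of GIT~\cite{DH98,Thaddeus96} to $\pi\colon \mathfrak{M}_\theta\to \mathfrak{M}_{\theta^\prime}$. The affine GIT quotient $\mathfrak{M}_0\coloneqq\mu^{-1}(0)\git_0 G$ is identified classically with $\Sym^n(\CC^2/\Gamma)$ for the framed McKay quiver and our dimension vector (see~\cite{Kuznetsov07}). Both $\mathfrak{M}_\theta$ and $\mathfrak{M}_{\theta^\prime}$ are $\Proj$ of finitely generated graded rings of semi-invariants over this affine base, and $\pi$ commutes with the structure morphisms to $\mathfrak{M}_0$; this gives projectivity of $\pi$ over $\Sym^n(\CC^2/\Gamma)$.

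For birationality I would choose a GIT chamber $C\subseteq\Theta_v$ whose closure contains $\theta$, and fix any generic $\eta\in C$; moving from $\eta$ interior to $\theta\in\bar{C}$ can only relax the stability condition, so $\eta\geq \theta\geq \theta^\prime$, and VGIT produces the tower
\[
\mathfrak{M}_\eta \longrightarrow \mathfrak{M}_\theta \xrightarrow{\pi} \mathfrak{M}_{\theta^\prime} \longrightarrow \mathfrak{M}_0
\]
of projective morphisms between irreducible varieties (Lemma~\ref{lem:Mthetageometry}). By~\cite{BC18} the composite $\mathfrak{M}_\eta\to \mathfrak{M}_0$ is a projective symplectic resolution, so in particular is birational of degree one. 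All four varieties have the same dimension $2n$: since $v$ is indivisible, the $G$-action on $\mu^{-1}(0)$ has trivial stabiliser on a dense open, giving $\dim \mathfrak{M}_\theta = \dim \mu^{-1}(0) - \dim G$ independently of $\theta$. Each factor in the tower is then a projective, generically finite morphism between irreducible varieties of equal dimension, and multiplicativity of the generic degree along the composition forces each factor, in particular $\pi$, to have degree one. Hence $\pi$ is birational.

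For surjectivity, $\pi$ is projective (hence closed) and birational (hence dominant), so its image is a closed dense subset of the irreducible variety $\mathfrak{M}_{\theta^\prime}$, i.e., equals $\mathfrak{M}_{\theta^\prime}$. The main obstacle I anticipate is the uniform dimension claim: it rests on the existence of a simple $\Pi$-module of dimension vector $v$ (giving the dense free orbit), which follows from Crawley-Boevey's theorem since $v=\rho_\infty+\sum_{i\geq 0} n\dim(\rho_i)\rho_i$ is a positive root in the root system attached to the framed McKay quiver.
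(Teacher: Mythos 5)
Your strategy is essentially the paper's: deform to a generic $\eta$ in the closure of a chamber containing $\theta$, use that $\mathfrak{M}_\eta\to\mathfrak{M}_0$ is a symplectic resolution by~\cite{BC18}, observe that all four varieties in the tower have dimension $2n$, and conclude birationality by multiplicativity of degree and surjectivity from projectivity plus irreducibility. The one place you diverge is in how the uniform dimension claim is established: the paper simply cites~\cite[Lemma~3.22]{BS16} together with~\cite[Lemma~4.4]{BC18}, whereas you argue it directly from a dense free orbit. Your argument is sound, but it compresses several ingredients that are worth spelling out: indivisibility of $v$ only gives a trivial stabiliser \emph{at} a $\theta$-stable point, not density of such points in $\mu^{-1}(0)$. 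What makes the argument work is that Crawley-Boevey's theorem supplies a \emph{simple} $\Pi$-module of dimension vector $v$, that the simple locus is a nonempty open subset of the (irreducible) scheme $\mu^{-1}(0)$, and — the key connecting observation you leave implicit — that a simple module of dimension $v$ is $\theta$-stable for \emph{every} $\theta\in\Theta_v$, so the $\theta$-semistable locus contains this dense open for all $\theta$, which is what lets you write $\dim\mathfrak{M}_\theta=\dim\mu^{-1}(0)-\dim G$ uniformly. With those points made explicit, your self-contained argument buys independence from the particular lemmas of~\cite{BS16} and~\cite{BC18}, at the cost of invoking Crawley-Boevey's root-theoretic machinery; the paper's citation-based route is shorter given the references it already relies on.
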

  \begin{proof}
  If $\theta$ is generic and $\theta'=0$, then the morphism $\mathfrak{M}_\theta\to\mathfrak{M}_0\cong \Sym^n(\mathbb{C}^{2}/\Gamma)$ is a projective symplectic resolution \cite[Theorem~4.5]{BC18} and the result holds. For the general case, 
  combining \cite[Lemma~3.22]{BS16} and \cite[Lemma~4.4]{BC18}, we get $\dim \mathfrak{M}_{\theta} = 2n$. This holds for any $\theta\in \Theta_v$, so $\dim\mathfrak{M}_{\theta'}=2n$.  The morphism $\pi\colon \mathfrak{M}_\theta\to \mathfrak{M}_{\theta'}$ is projective, so the image $Z\coloneqq \pi(\mathfrak{M}_\theta)$ is closed in $\mathfrak{M}_{\theta'}$. Deform $\theta$ if necessary to a generic parameter $\eta$ such that $\eta\geq \theta$. Then the resolution $\mathfrak{M}_\eta\to\mathfrak{M}_0\cong \Sym^n(\mathbb{C}^{2}/\Gamma)$ factors through $\pi$ by variation of GIT quotient, so $\dim(Z) = 2n$ and hence $\pi$ is birational onto its image. It follows that $Z$ is an irreducible component of $\mathfrak{M}_{\theta'}$. However, $\mathfrak{M}_{\theta'}$ is irreducible \cite[Proposition~3.21]{BS16}; so $\pi$ is surjective.
  \end{proof}

  The GIT wall-and-chamber structure on $\Theta_v$ was computed explicitly in \cite[Theorem~4.6]{BC18}. In this paper, we focus on the distinguished GIT chamber
   \begin{equation}
       \label{eqn:C_+}
   C_+\coloneqq \big\{\theta\in \Theta_v \mid \theta(\rho_i) > 0 \text{ for }i\geq 0\big\}.
   \end{equation}
  It is well known that the quiver variety $\mathfrak{M}_\theta$ for $\theta\in C_+$ admits a description as an equivariant Hilbert scheme. Recall from the Introduction that $n\Gamma\text{-}\Hilb(\CC^2)$ is the scheme parametrising $\Gamma$-invariant ideals $I\lhd\CC[x,y]$ with quotient isomorphic as a representation of $\Gamma$ to the direct sum of $n$ copies of the regular representation of $\Gamma$.

 \begin{thm}[\cite{VV99, Wang99, Kuznetsov07}]
\label{thm:VVW}
 Let $\Gamma_n\coloneqq\Gamma^n\rtimes \mathfrak{S}_n\subset \Sp(2n,\CC)$ denote the wreath product of $\Gamma$ with the symmetric group $\mathfrak{S}_n$. For $\theta\in C_+$, there is a commutative diagram
  \begin{equation*}
\begin{tikzcd}
 n\Gamma\text{-}\Hilb(\CC^2) \ar[r,"\sim"]\ar[d] & \mathfrak{M}_\theta \ar[d,"{\pi}"] \\
 \CC^{2n}/\Gamma_n\cong \Sym^n(\CC^{2}/\Gamma)\ar[r,"\sim"] &  \mathfrak{M}_0
 \end{tikzcd}
\end{equation*}
 in which the horizontal arrows are isomorphisms and the vertical arrows are symplectic resolutions. 
 \end{thm}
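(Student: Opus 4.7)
The plan is to identify, for $\theta\in C_+$, the $\theta$-stable $\Pi$-modules of dimension vector $v$ with the points of $n\Gamma\text{-}\Hilb(\CC^2)$, and then to deduce the commutative square from the universal property of the affinisation. The first step is the stability analysis. For $\theta\in C_+$ we have $\theta_i>0$ for all $i\geq 0$, and $\theta(v)=0$ forces $\theta_\infty=-\sum_{i\geq 0}n\dim(\rho_i)\theta_i<0$. A proper nonzero submodule $N\subset M$ either satisfies $N_\infty=0$, in which case $\theta(\dim N)=\sum_{i\geq 0}\theta_i\dim N_i>0$ is automatic, or $N_\infty=1$, in which case
\[
\theta(\dim N)=\sum_{i\geq 0}\theta_i(\dim N_i-n\dim\rho_i)\leq 0,
\]
with equality only when $N=M$. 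Applying this to $N$ equal to the $\Pi$-submodule generated by $M_\infty$ forces $N=M$, so every $\theta$-stable $M$ is cyclic with generator at $\infty$; the converse direction holds by the first case.

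The second step translates a cyclic $\Pi$-module $M$ with generator $v_\infty\in M_\infty$ into a point of $n\Gamma\text{-}\Hilb(\CC^2)$ via the standard dictionary relating the preprojective algebra of the McKay quiver to the skew group algebra $\CC[x,y]\#\Gamma$. Writing $V\coloneqq\bigoplus_{i\geq 0}M_i$, the preprojective relations at the vertices $i\geq 0$ assemble the doubled arrows of the unframed McKay quiver into a pair of commuting $\Gamma$-covariant operators $B_1,B_2$ on $V$ transforming like the basis of the given $2$-dimensional representation; the dimension-vector conditions force $V\cong R^{\oplus n}$ as a $\Gamma$-module; and the framing arrow provides a $\Gamma$-invariant cyclic vector. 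Sending $p\in\CC[x,y]$ to $p(B_1,B_2)v_\infty$ is then a surjective $\Gamma$-equivariant algebra map $\CC[x,y]\twoheadrightarrow V$ whose kernel is a $\Gamma$-invariant ideal with the correct character, i.e.\ a closed point of $n\Gamma\text{-}\Hilb(\CC^2)$. The construction is reversible at the level of closed points, and promoting it to an isomorphism of reduced schemes uses the fine moduli description of $\mathfrak{M}_\theta$ recalled above together with Kuznetsov's GIT presentation \cite{Kuznetsov07} of the equivariant Hilbert scheme, by matching universal families.

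For the commutative square, $\pi\colon\mathfrak{M}_\theta\to\mathfrak{M}_0$ is the affinisation morphism of $\mathfrak{M}_\theta$, while composing \eqref{eg:equivariant_HC} with the Hilbert--Chow morphism realises $\Sym^n(\CC^2/\Gamma)\cong\CC^{2n}/\Gamma_n$ as the affinisation of $n\Gamma\text{-}\Hilb(\CC^2)$; since affinisation is functorial and the top isomorphism identifies the universal families, the square commutes and the bottom isomorphism is the induced one. That both vertical maps are symplectic resolutions follows from the arguments recalled in the proof of Lemma~\ref{lem:VGIT}. The hardest step in this plan is the scheme-theoretic refinement in step two: while the closed-point bijection is essentially bookkeeping, upgrading it to an isomorphism of reduced schemes requires a careful comparison of the tautological bundle $\mathcal{R}$ on $\mathfrak{M}_\theta$ with the pullback of the universal quotient sheaf on $n\Gamma\text{-}\Hilb(\CC^2)$.
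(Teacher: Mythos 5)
The paper does not prove Theorem~\ref{thm:VVW}; it is cited to \cite{VV99, Wang99, Kuznetsov07} and used as an input. So the question is whether your sketch gives a viable route to the known result.

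Your first step (the equivalence between $\theta$-stability for $\theta\in C_+$ and cyclicity from the framing vertex) is correct and matches the standard argument. Your affinisation argument for the commutative square is also fine: $\mathfrak{M}_0$ is the affine GIT quotient, $\Sym^n(\CC^2/\Gamma)$ has rational singularities, and both vertical maps are projective to affine targets, so functoriality of affinisation identifies the bottom row once the top row is established.

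The genuine gap is in your second step. You assert that ``the preprojective relations at the vertices $i\geq 0$ assemble the doubled arrows of the unframed McKay quiver into a pair of \emph{commuting} $\Gamma$-covariant operators $B_1,B_2$''. This is not what the moment map equation gives you. The preprojective relation at the vertex $0$ contains the extra term $\epsilon(b)\,bb^*$ coming from the framing and coframing arrows, so the relation is a \emph{deformed} commutator $[B_1,B_2]=\pm bb^*$ (in the appropriate covariant packaging), not genuine commutativity. The relation at $\infty$ gives only $b^*b=0$, a scalar condition, not $b^*=0$. To reduce to an honest ideal in $\CC[x,y]$ you must first prove $b^*=0$, and this does not follow from cyclicity alone; it requires the ADHM-type trace computation (showing $b^*\,p(B_1,B_2)\,b=0$ for all polynomials $p$ by successive use of the moment map relation and the trace-zero condition, and then invoking cyclicity to conclude $b^*=0$), or the analogous argument in the $\Gamma$-equivariant setting carried out in \cite{Kuznetsov07}. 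As written, your proof silently uses $b^*=0$ without deriving it, and the claimed surjection $\CC[x,y]\twoheadrightarrow V$ being an algebra map with kernel an ideal is only valid after this step. If you add the vanishing-of-coframing lemma, the rest of your dictionary and the scheme-theoretic upgrade via universal families go through as you describe.
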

 
 We now study partial resolutions of $\Sym^n(\CC^{2}/\Gamma)$ through which the resolution from Theorem~\ref{thm:VVW} factors. The result of \cite[Proposition~6.1]{BC18} implies that for $n>1$, the nef cone of $n\Gamma\text{-}\Hilb(\CC^2)$ over $\Sym^n(\CC^{2}/\Gamma)$ is isomorphic to the closure $\overline{C_+}$ of the chamber from \eqref{eqn:C_+}. For $n=1$, the relation between these two cones is described in \cite[Proposition~7.11]{BC18} (see Remark~\ref{rem:n=1} for more on the case $n=1$). In any case, for $n\geq 1$, the partial resolutions of interest can all be obtained as follows: choose a face of $\overline{C_+}$ and any GIT parameter from the relative interior of that face; then perform variation of GIT quotient as the parameter moves to the origin in $\Theta_v$. 
 
 Every face of $\overline{C_+}$ is of the form 
   \[
   \sigma_J\coloneqq\left\{\theta\in \overline{C_+} \mid \theta(\rho_j)>0 \text{ iff }j\in J\right\}
   \]
   for some (possibly empty) subset $J\subseteq \{0,1,\dots,r\}$. The parameter $\theta_J\in \overline{C_+}$ defined by setting 
   \[
   \theta_J(\rho_i) = \left\{\begin{array}{cr} -\sum_{j\in J}n\dim(\rho_j) & \text{ for } i=\infty \\ 1 & \text{ if }i\in J \\ 0 & \text{ if }i\in \{0,1,\dots,r\}\setminus J\end{array}\right.\]
   lies in the relative interior of 
   $\sigma_J$. To simplify notation, in the case $J=\{0\}$ we occasionally denote $\theta_0\coloneqq\theta_{\{0\}}$.
   
   \begin{prop}
   \label{prop:poset}
   The face poset of the cone $\overline{C_+}$ can be identified with the poset on the set of quiver varieties $\mathfrak{M}_{\theta_J}$ for subsets $J\subseteq \{0,1,\dots,r\}$, where edges in the Hasse diagram of the poset are realised by the surjective, projective and birational morphisms $\pi_{J,J'}\colon \mathfrak{M}_{\theta_J}\to \mathfrak{M}_{\theta_{J'}}$. 
\end{prop}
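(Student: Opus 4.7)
The plan is to establish the GIT preorder $\theta_J \ge \theta_{J'}$ for every inclusion $J' \subseteq J$, apply Lemma~\ref{lem:VGIT} to produce the desired morphisms $\pi_{J,J'}$, and finally argue that distinct subsets $J$ give non-isomorphic quiver varieties over the affine base so that the assignment $J\mapsto \mathfrak{M}_{\theta_J}$ is an isomorphism of posets.

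The core calculation is the preorder relation, which I would verify directly. Let $M$ be a $\theta_J$-semistable $\Pi$-module of dimension vector $v$, and let $N\subseteq M$ be a $\Pi$-submodule. Since $N$ is automatically graded by the vertex idempotents, the inclusion $N_i \subseteq M_i$ gives the bound $(\dim N)_i \le v_i$ for every $i \in Q_0$. Because $v_\infty = 1$, there are two cases. If $(\dim N)_\infty = 0$, then
\[
\theta_{J'}(\dim N) = \sum_{i \in J'} (\dim N)_i \ge 0
\]
automatically. If instead $(\dim N)_\infty = 1$, then $\theta_J(\dim N)\ge 0$ together with $\theta_J(\rho_\infty) = -\sum_{j\in J} v_j$ forces $\sum_{i\in J}(\dim N)_i \ge \sum_{j\in J} v_j$; combined with the bound $(\dim N)_i \le v_i$, this forces the equality $(\dim N)_i = v_i$ for every $i\in J$, and in particular for every $i\in J'$. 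Therefore $\theta_{J'}(\dim N) = -\sum_{j\in J'}v_j + \sum_{i\in J'}v_i = 0$, so $M$ is $\theta_{J'}$-semistable as required.

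Having verified $\theta_J \ge \theta_{J'}$, Lemma~\ref{lem:VGIT} immediately supplies the surjective, projective, and birational morphism $\pi_{J,J'}\colon \mathfrak{M}_{\theta_J}\to \mathfrak{M}_{\theta_{J'}}$ over $\Sym^n(\CC^2/\Gamma)$, and functoriality of variation of GIT quotient renders the compositions $\pi_{J',J''}\circ \pi_{J,J'} = \pi_{J,J''}$ compatible. To conclude that these morphisms realise the Hasse diagram of the face poset of $\overline{C_+}$, I would argue that distinct subsets $J$ yield non-isomorphic varieties by invoking the explicit wall-and-chamber description from~\cite[Theorem~4.6]{BC18}: each parameter $\theta_J$ lies in a distinct cone of the GIT fan inside $\overline{C_+}$, and parameters in distinct cones of the GIT fan give non-isomorphic coarse moduli spaces over the affine base $\Sym^n(\CC^2/\Gamma)$.

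The main obstacle is this final step. The preorder relation is a direct combinatorial check and Lemma~\ref{lem:VGIT} does the heavy lifting in constructing the morphisms; by contrast, showing that the face-indexed parameters $\theta_J$ are pairwise GIT-inequivalent requires the concrete wall-and-chamber structure established in~\cite{BC18}, since a priori two different parameters might well give isomorphic quotients.
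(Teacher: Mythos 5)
Your verification of the preorder $\theta_J \ge \theta_{J'}$ for $J'\subseteq J$ is a direct and correct combinatorial check, and it makes explicit what the paper's terse proof (``This is standard for variation of GIT quotient'') leaves implicit; your application of Lemma~\ref{lem:VGIT} to supply the surjective, projective, birational morphisms $\pi_{J,J'}$ also coincides with the paper's argument.

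However, the third step of your proposal -- arguing that distinct subsets $J$ yield non-isomorphic quiver varieties over $\Sym^n(\CC^2/\Gamma)$ -- is both unnecessary and based on a false premise. The wall-and-chamber formalism yields only one implication: parameters in the relative interior of a common cone give isomorphic quotients. It does \emph{not} assert that parameters in distinct cones give non-isomorphic quotients, and this converse fails here: Remark~\ref{rem:n=1} of the paper records that for $n=1$ the morphism $\pi_{J,J'}$ is an isomorphism whenever $J\setminus J'=\{0\}$, so distinct faces of $\overline{C_+}$ do produce isomorphic quiver varieties over the base. The proposition should be read as identifying the face poset with the $J$-indexed family $\{\mathfrak{M}_{\theta_J}\}$ together with the morphisms $\pi_{J,J'}$ realising the covering relations, not as asserting an injection from faces to isomorphism classes of varieties. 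The ``main obstacle'' you identify in your closing paragraph therefore dissolves: once the preorder relation is verified and Lemma~\ref{lem:VGIT} is invoked, nothing further needs to be shown, and indeed nothing further is shown in the paper.
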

   \begin{proof}
   This is standard for variation of GIT quotient apart from surjectivity and birationality of each $\pi_{J,J'}$. This was established in Lemma~\ref{lem:VGIT}. 
   \end{proof}
  
   \begin{remark}
   When $J'=\varnothing$ and $J=\{0,\dots,r\}$, the morphism $\mathfrak{M}_{\theta_J}\to\mathfrak{M}_{\theta_{J'}}$ is the resolution $n\Gamma\text{-Hilb}(\CC^2)\to \Sym^n(\CC^{2}/\Gamma)$ from Theorem~\ref{thm:VVW}. The statement of Proposition~\ref{prop:poset} implies that the paths in the Hasse diagram of the face poset of $\overline{C_+}$ from the unique maximal element to the unique minimal element provide all possible ways in which this resolution can be decomposed via primitive morphisms \cite{Wilson92}. 
 \end{remark}
   
   \begin{example} 
   Consider the case $\Gamma\cong\mu_3$, corresponding to Dynkin type $A_2$, and $n=3$. Figure~\ref{fig:A2n=3} shows a transverse slice of the GIT wall-and-chamber structure inside a specific closed cone $F$ in the space $\Theta_v$ of stability parameters. According to~\cite[Theorem 1.2]{BC18}, this decomposition of the cone is isomorphic as a fan to the closure of the movable cone of this particular $n\Gamma\text{-}\Hilb(\CC^2)$, with its natural subdivision into nef cones of birational models. The open subcone $C_+$ corresponds to the ample cone of $n\Gamma\text{-}\Hilb(\CC^2)$ itself. In Section~\ref{sec:HilbSchemesOfSings} we focus on the distinguished ray $\langle\theta_0\rangle$ in the boundary of $F$. 
\end{example}
  \begin{figure}[!ht]
   \centering
       \begin{tikzpicture}[baseline={(0,0)},xscale=1.25,yscale=1]
			\tikzset{>=latex}
            \draw (4,4)--(2,0)--(0,4)
            --(4,4)
            --(10/11,24/11)
            --(34/11,24/11)
            --(0,4)
            --(2.75,1.5)
            --(1.25,1.5)
            --(4,4)
            --(0,4)
            -- cycle;
              \node at (2,1) {$C_+$};
 \filldraw(2,0) circle (1pt) node[align=left,   below] {$\langle\theta_0\rangle$};
			\end{tikzpicture}
           \caption{Wall-and-chamber structure inside the cone $F$ for $\Gamma\cong\mu_3$ and $n=3$}
            \label{fig:A2n=3}
  \end{figure}
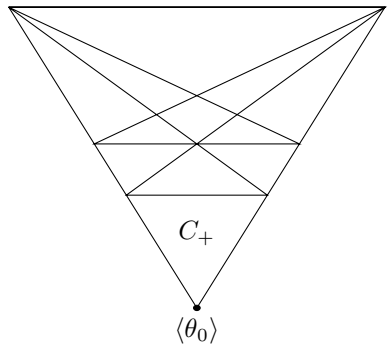
  
  We conclude this section with a lemma that identifies the key geometric fact that makes the chamber $C^+$ special; our argument depends crucially on this observation. For $\theta\in C_+$ and for any $\theta'\in \Theta_v$, we consider the line bundle $L_{C_+}(\theta')\coloneqq \bigotimes_{0\leq i\leq r} \det(\mathcal{R}_i)^{\theta'_i}$ on $\mathfrak{M}_\theta$; the line bundle   $L_J\coloneqq L_{C_+}(\theta_J)$
  will play a special role in particular.
  
  \begin{lem}
  \label{lem:ampleVGIT}
 Let $\theta\in C_+$. Then
 \begin{enumerate}
 \item[\one] for each $\theta' \in \overline{C_+}$, the line bundle $L_{C_+}(\theta')$ on $\mathfrak{M}_\theta$ is globally generated;
 \item[\two] for any $J\subseteq \{0,\dots, r\}$, after multiplying $\theta_J$ by a positive integer if necessary, the morphism to the linear series of $L_J$ decomposes as the composition of $\pi_J$ and a closed immersion:
   \begin{equation}
 \label{eqn:ampleVGIT}
\begin{tikzcd}
  \mathfrak{M}_\theta \ar[d,swap,"{\pi_J}"]\ar[r,"\varphi_{\vert L_J\vert}"] &  \vert L_J\vert. \\   \mathfrak{M}_{\theta_J}\ar[ur,hook] & 
  \end{tikzcd}
  \end{equation}
 \end{enumerate}
  \end{lem}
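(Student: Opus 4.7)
The plan is to combine King's GIT-theoretic description of (semi)stability with the wall-and-chamber structure on $\Theta_v$ in order to identify $L_J$ as the pullback under $\pi_J$ of the ample GIT polarisation on $\mathfrak{M}_{\theta_J}$.

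Since $\theta\in C_+$ is generic and $v$ is indivisible, $\mathfrak{M}_\theta$ is the geometric quotient of the stable locus $\mu^{-1}(0)^s_\theta$ by $G$, and $L_{C_+}(k\theta')$ is the descent to $\mathfrak{M}_\theta$ of the trivial $G$-equivariant line bundle on $\mu^{-1}(0)^s_\theta$ with linearisation $\chi_{k\theta'}$. Hence $H^0(\mathfrak{M}_\theta, L_{C_+}(k\theta'))$ is identified with the space of $\chi_{k\theta'}$-semi-invariant regular functions on $\mu^{-1}(0)^s_\theta$. For \one, since $C_+$ is a GIT chamber and $\theta'\in\overline{C_+}$, the preorder from Section~2 gives $\theta\ge\theta'$, i.e.\ $\mu^{-1}(0)^s_\theta\subseteq\mu^{-1}(0)^{ss}_{\theta'}$. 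King's characterisation of semistability \cite{King94} then guarantees that every point of $\mu^{-1}(0)^s_\theta$ carries a non-vanishing $\chi_{k\theta'}$-semi-invariant regular function for some $k>0$; quasi-compactness of $\mathfrak{M}_\theta$ over $\Sym^n(\CC^2/\Gamma)$ yields a uniform such $k$, so a positive power of $L_{C_+}(\theta')$ is globally generated, which is the content of \one (the implicit multiplication being made explicit in \two).

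For \two, fix $J$ and multiply $\theta_J$ by a positive integer, large enough that $L_J$ is globally generated and that the $\chi_{\theta_J}$-construction yields an ample line bundle $\mathcal{A}_J$ on $\mathfrak{M}_{\theta_J}$. The key identity is $L_J=\pi_J^{*}\mathcal{A}_J$: both line bundles arise by descending the same $G$-equivariant linearised trivial bundle (with character $\chi_{\theta_J}$), either directly from the $\theta$-stable locus to $\mathfrak{M}_\theta$ (giving $L_J$), or via the intermediate $\theta_J$-semistable locus to $\mathfrak{M}_{\theta_J}$ and then pulled back via $\pi_J$ (giving $\pi_J^{*}\mathcal{A}_J$); this is the standard VGIT compatibility. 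By Lemmas~\ref{lem:Mthetageometry} and~\ref{lem:VGIT}, $\pi_J$ is a projective birational morphism between irreducible normal varieties, so $(\pi_J)_{*}\mathcal{O}_{\mathfrak{M}_\theta}=\mathcal{O}_{\mathfrak{M}_{\theta_J}}$; the projection formula then identifies the graded section rings of $L_J$ and $\mathcal{A}_J$ over the affine base $\Sym^n(\CC^2/\Gamma)$, so $\vert L_J\vert=\vert\mathcal{A}_J\vert$ and $\varphi_{\vert L_J\vert}$ factors through $\pi_J$. The remaining factor $\mathfrak{M}_{\theta_J}\to\vert\mathcal{A}_J\vert$ is a closed immersion because $\mathcal{A}_J$ is ample and $\mathfrak{M}_{\theta_J}$ is projective over $\Sym^n(\CC^2/\Gamma)$.

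The main obstacle is the identification $L_J=\pi_J^{*}\mathcal{A}_J$, which requires carefully comparing two GIT descents of a common $G$-linearised line bundle to two different quotients of two nested semistable loci. An alternative route avoiding explicit descents is to use that by construction $\mathfrak{M}_{\theta_J}$ is the relative $\Proj$ over $\Sym^n(\CC^2/\Gamma)$ of the graded ring $R=\bigoplus_{k\geq 0}\mathcal{O}(\mu^{-1}(0))^{G,\chi_{k\theta_J}}$, whereas $\bigoplus_{k\geq 0}H^0(\mathfrak{M}_\theta,L_J^{\otimes k})$ is the analogous ring obtained by restricting to $\mu^{-1}(0)^s_\theta$; since the complement of the latter inside $\mu^{-1}(0)^{ss}_{\theta_J}$ has codimension at least two in the normal variety $\mu^{-1}(0)$, these rings agree in sufficiently high degree and both Proj constructions produce $\mathfrak{M}_{\theta_J}$, completing the argument.
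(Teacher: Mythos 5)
Your proposal reconstructs the underlying GIT theory from scratch, whereas the paper's proof is short precisely because it outsources both halves to sharp results in the literature. For \one, the paper cites \cite[Corollary~2.4]{CIK18}, which states that for $\theta\in C_+$ every tautological bundle $\mathcal{R}_i$ on $\mathfrak{M}_\theta$ is globally generated; since $L_{C_+}(\theta')=\bigotimes_i\det(\mathcal{R}_i)^{\theta'_i}$ with $\theta'_i\ge 0$ for $i\ge 0$, the line bundle $L_{C_+}(\theta')$ itself is globally generated. Your King-style argument only produces a non-vanishing $\chi_{k\theta'}$-semi-invariant for \emph{some} $k>0$ at each point, and quasi-compactness gives a single $k$ working everywhere, but this only shows $L_{C_+}(\theta')^{\otimes k}$ is globally generated, not $L_{C_+}(\theta')$. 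You flag this by deferring the multiple to \two, but \one\ as stated is about $L_{C_+}(\theta')$ itself; your argument therefore proves a strictly weaker statement than the lemma claims, and the stronger statement is genuinely used (global generation of all $\mathcal{R}_i$, not of a high power, is the ``key feature'' the authors point to in Remark 3.2(1)).

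For \two, the paper simply invokes \cite[Theorem~1.2]{BC18}, which is exactly a statement about the linearisation map $L_{C_+}$ from $\overline{C_+}$ to the nef cone matching up $\theta_J$ with the semi-ample line bundle inducing $\pi_J$; this packages the VGIT descent compatibility $L_J=\pi_J^*\mathcal{A}_J$ together with all the fan structure. Your first argument for this identity is the right idea (both bundles descend from the $\chi_{\theta_J}$-linearised trivial bundle), and with the projection formula via $(\pi_J)_*\mathcal{O}_{\mathfrak{M}_\theta}=\mathcal{O}_{\mathfrak{M}_{\theta_J}}$ it would give the factorisation, but you defer the crux (``requires carefully comparing two GIT descents'') and offer as a fallback a codimension-two-complement argument whose hypotheses are not established: neither the normality of $\mu^{-1}(0)$ nor the claim that $\mu^{-1}(0)^{ss}_{\theta_J}\setminus\mu^{-1}(0)^s_\theta$ has codimension $\ge 2$ in $\mu^{-1}(0)$ is justified, and the latter is not a formal consequence of VGIT in general. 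So your route is plausible in outline but leaves genuine gaps that the paper avoids by quoting \cite{CIK18} and \cite{BC18} directly.
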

  \begin{proof}
   Since $\theta\in C_+$, the tautological bundles $\mathcal{R}_i$ on the quiver variety $\mathfrak{M}_\theta$ are globally generated for $i\in {Q_0}$ by \cite[Corollary~2.4]{CIK18}. Hence $L_{C_+}(\theta')$ is globally generated because $\theta_i'\geq 0$ for all $0\leq i\leq r$. In particular, since $\theta_J\in \overline{C_+}$, the rational map $\varphi_{\vert L_J\vert}$ is defined everywhere. 
   The line bundle $L_J$ induces the morphism $\pi_J\colon \mathfrak{M}_\theta\to \mathfrak{M}_{\theta_J}\subset \vert L_J\vert$ by \cite[Theorem~1.2]{BC18}, where we take a positive multiple of $\theta_J$ if necessary to ensure that the polarising ample bundle on $\mathfrak{M}_{\theta_J}$ is very ample. This proves the result.
  \end{proof}
  
  \begin{remark}
  \label{rem:highmultiple}
  We choose a sufficiently high multiple of $\theta$ (and the same high multiple of each $\theta_J$) to ensure that the polarising ample line bundle on $\mathfrak{M}_{\theta_J}$ is very ample for every subset $J\subseteq \{0,\dots, r\}$.
   \end{remark}

  \section{Deleting an arrow and cornering the algebra}

 In the framed McKay quiver $Q$, let $b^*\in Q_1$ be the unique arrow with head at vertex $\infty$. Define a new $\CC$-algebra as the quotient of the preprojective algebra $\Pi$ by the two-sided ideal generated by the class of $b^*$:
 \[
 A\coloneqq \Pi/(b^*).
 \]
 Equivalently, if we define a quiver $Q^*$ to have vertex set $Q^*_0=\{\infty,0,1,\dots,r\}$ and arrow set $Q_1^*=Q_1\setminus \{b^*\}$, then $A$ is the quotient of the path algebra $\CC Q^*$ by the ideal of relations
  \begin{equation}
        \label{eqn:Arelations}
    \left( \sum_{\head(a) = i} \epsilon(a) aa^* \mid 0\leq i, \head(a),\head(a^*)\leq r \right).
     \end{equation}
 Since $Q^*$ and $Q$ share the same vertex set, we may consider $v\in \ZZ^{Q_0}$ as a dimension vector for $A$-modules and any parameter $\theta\in \Theta_v$ as a stability condition for $A$-modules of dimension vector $v$. 
 
 \begin{lem}
 \label{lem:PiA} 
For $\theta\in C_+$, the tautological $\CC$-algebra homomorphism $\Pi\to \End(\mathcal{R})$ over $\mathfrak{M}_\theta$ factors through~$A$. In particular, the tautological bundle $\mathcal{R}$ on $\mathfrak{M}_\theta$ is a flat family of $\theta$-stable $A$-modules of dimension vector~$v$.
 \end{lem}
 \begin{proof}
 The image of the arrow $b^*$ under the tautological homomorphism $\Pi\to \End(\mathcal{R})$ is a map of vector bundles $\mathcal{R}_0\to \mathcal R_\infty\cong \mathcal{O}_{\mathfrak{M}_\theta}$. Under the isomorphism $\mathfrak{M}_\theta \cong n\Gamma\text{-}\Hilb(\CC^2)$ from Theorem~\ref{thm:VVW}, we may regard the fibre of $\mathcal{R}$ over any closed point as the quotient $\CC[x,y]/I$ for some $\Gamma$-invariant ideal $I$ of $\CC[x,y]$. In this language, the restriction to the $0$ vertex is the $\Gamma$-invariant part of the 
quotient $\CC[x,y]/I$, and it is well known that the induced map to the one-dimensional vector space at the vertex $\infty$ vanishes in this case. Thus, as $\mathfrak{M}_\theta$ is non-singular and in particular reduced,
the corresponding map $\mathcal{R}_0\to \mathcal{O}_{\mathfrak{M}_\theta}$ is the zero map, so the tautological $\CC$-algebra homomorphism $\Pi\to \End(\mathcal{R})$ factors through $A$ as required.
 \end{proof}
 
  Lemma~\ref{lem:PiA} allows us to work with the algebra $A$ rather than $\Pi$. To illustrate why this is convenient, define the \emph{unframed McKay quiver} $Q_\Gamma$ to be the complete subquiver of $Q^*$ on the vertex set $\{0,1,\dots,r\}$; that is, if $b\in Q_1^*$ is the unique arrow with tail at $\infty$, then $Q_\Gamma$ has vertex set $\{0,\dots, r\}$ and arrow set $Q_1^*\setminus \{b\}$. The preprojective algebra $\Pi_\Gamma$ of the unframed McKay quiver is the quotient of $\CC Q_\Gamma$ by the ideal generated by the preprojective relations in $Q_\Gamma$ defined similarly to those in $Q$ as in \eqref{eqn:Pirelations}. Now, while $\Pi_\Gamma$ is not a subalgebra of~$\Pi$, it is isomorphic to a subalgebra of~$A$ as we now show. Denote by $e_i\in A$ the vertex idempotents in the algebra~$A$, and by a slight abuse of notation, let $b\in A$ denote the class of the arrow $b$.

   \begin{lem}
   \label{lem:PiJsubalg}
 The preprojective algebra $\Pi_\Gamma$ is isomorphic to the subalgebra $\bigoplus_{0\leq i,j\leq r} e_j A e_i$ of $A$. Under this embedding, there is a isomorphism
\[  A \cong \Pi_\Gamma \oplus \Pi_\Gamma b \oplus \mathbb{C} e_\infty
\]
 of complex vector spaces.
  \end{lem}
   \begin{proof}
   The quiver $Q^*$ has no arrow with head at vertex $\infty$, so the subalgebra $\bigoplus_{0\leq i,j\leq r} e_j A e_i$ of $A$ is isomorphic to the quotient algebra $A/(e_\infty)$. The first statement follows since $A/(e_\infty) \cong \Pi_\Gamma$. The decomposition of $A$ as a vector space is immediate from the structure of the quiver $Q^*$.
   \end{proof}
   
  Let $J\subseteq \{0,1,\dots,r\}$ be non-empty. Define the idempotent $e_J\coloneqq e_\infty + \sum_{j\in J} e_j$ and consider the subalgebra 
 \[
 A_J\coloneqq e_J A e_J
 \]
 of $A$ spanned over $\CC$ by the classes of paths in $Q^*$ whose tail and head both lie in the set $\{\infty\}\cup J$. The process of passing from $A$ to $A_J$ is called \emph{cornering}; see \cite[Remark~3.1]{CIK18}. 
 
We will study moduli spaces of certain finite-dimensional $A_J$-modules, and for this we must introduce a presentation of the algebra $A_J$ in terms of a quiver with relations.

 First, recall that 
 the $\Gamma$-module $R:=\CC[x,y]$ decomposes into isotypical components $R=\bigoplus_{0\leq i\leq r} R_i$, where $R_i$ is the sum of all $\Gamma$-submodules of $R$ that are isomorphic to $\rho_i$. In particular, $R_0=\CC[x,y]^{\Gamma}$ and $R_i$ is a reflexive $R_0$-module for each $0\leq i\leq r$. Since $\Gamma\subset \SL(2,\CC)$, the $\Gamma$-invariant subring $R_0$ is well known to admit a presentation of the form
  \begin{equation}
 \label{eqn:duValEquation}      R_0\cong \CC[z_1,z_2,z_3]/(f),
  \end{equation} 
 leading to the famous description of $\CC^2/\Gamma$ as a hypersurface $(f=0)\subset \CC^3$. On the other hand, combining Auslander~\cite{Auslander86} and Reiten--Van den Bergh~\cite{RVdB89} (compare Buchweitz~\cite{Buchweitz12}) gives an isomorphism
\begin{equation}
 \label{eqn:ARVdB}
 \Pi_\Gamma\cong \End_{R_0}\bigg(\bigoplus_{0\leq i\leq r} R_i\bigg)
 \end{equation}
 of $\CC$-algebras. Note that for $0\leq i, j\leq r$, the space $\Hom_{R_0}(R_i,R_j)$ is finitely generated as an $R_0$-module.  One way to see this is to consider the reflexive sheaves $\widetilde R_i$ on $\CC^2/\Gamma$ determined by the reflexive $R_0$-modules~$R_i$; then $\Hom_{R_0}(R_i,R_j)$ is the space of sections of the coherent sheaf $\mathcal{H}om(\widetilde R_i,\widetilde R_j)$ on $\CC^2/\Gamma$.
 
 \begin{prop}
 \label{prop:quiver}
For any non-empty subset $J\subseteq \{0,1,\dots,r\}$, the algebra $A_J$ can be presented as the quotient of the path algebra of a quiver modulo a two-sided ideal of relations.
 \end{prop}
 \begin{proof}
If we regard $\Pi_\Gamma$ as a subalgebra of $A$ using Lemma~\ref{lem:PiJsubalg}, we see that $e^\prime_J:= \sum_{j\in J} e_j$ is the sum of vertex idempotents in $\Pi_\Gamma$, 
and the isomorphism \eqref{eqn:ARVdB} induces an isomorphism
\begin{equation}
    \label{eqn:EndJ}
e^\prime_J(\Pi_\Gamma)e^\prime_J \cong \End_{R_0}\Big(\bigoplus_{j\in J} R_j\Big).
\end{equation}
 Since $\Pi_\Gamma$ is isomorphic to $\bigoplus_{0\leq i,j\leq r} e_j A e_i$ by Lemma~\ref{lem:PiJsubalg}, it follows that the algebra $e^\prime_J(\Pi_\Gamma)e^\prime_J$ from \eqref{eqn:EndJ} is isomorphic to the subalgebra $e^\prime_J A_J e_J^\prime$ of $A_J$. For each $j\in J$, we choose three $\CC$-algebra generators of $R_0\subseteq \Hom_{R_0}(R_j,R_j)$ corresponding to the generators in the presentation~\eqref{eqn:duValEquation}, and we extend this to a set of $d_j\geq 3$ generators of $\Hom_{R_0}(R_j,R_j)$ as a $\CC$-algebra.
 Finally, for $i, j\in J$ with $i\neq j$, we choose a finite generating set for $\Hom_{R_0}(R_i,R_j)$ as an $R_0$-module comprising $d_{i,j}>0$ generators.
 
 We claim that there exist quivers $Q_J$ and $Q^*_J$ whose path algebras fit into a commutative diagram
  \begin{equation}
 \label{eqn:algebrahomosJ}
\begin{tikzcd}
  \CC Q_J \ar[d,"\alpha_J"]\ar[r] &  \CC Q_J^* \ar[d,"\beta_J"] \\
  \End_{R_0}\Big(\bigoplus_{j\in J} R_j\Big) \ar[r] & A_J
  \end{tikzcd}
  \end{equation}
 of $\CC$-algebra homomorphisms where the vertical maps are surjective and the horizontal maps are injective. Given the claim, Proposition~\ref{lem:PiJsubalg} follows because the required quiver is $Q_J^*$ and the ideal of relations is $\ker(\beta_J)$.

 To prove the claim, we consider two cases. Suppose first that $0\in J$. Define the vertex set of $Q_J$ to be $J$. For the arrow set of $Q_J$, introduce $d_{j}$ loops at each vertex $j\in J$  corresponding to our chosen $\CC$-algebra generators of $\Hom_{R_0}(R_j,R_j)$, including the three distinguished generators of its subalgebra $R_0$.
  Furthermore, for each $i, j\in J$ with $i\neq j$, introduce $d_{i,j}$ arrows from $i$ to $j$ corresponding to our chosen  $R_0$-module generators of $\Hom_{R_0}(R_i,R_j)$. The concatenation of any arrow from $i$ to $j$ with loops at vertex $j$ corresponding to the appropriate elements of $R_0 \subseteq \Hom_{R_0}(R_j,R_j)$ defines paths in $Q_J$ that represent a spanning set for the vector space  $e_jA_Je_i=\Hom_{R_0}(R_i,R_j)$. This
 determines by construction the left-hand epimorphism $\alpha_J$ in 
 \eqref{eqn:algebrahomosJ}. 
 
 Next, define the vertex set of $Q^*_J$ to be $\{\infty\}\cup J$ and define the arrow set by augmenting the arrow set of $Q_J$ with one additional arrow $b$ with tail at $\infty$ and head at $0$. This is well-defined since $0\in J$, and moreover, $\CC Q_J$ is a subalgebra of $\CC Q_J^*$ because $Q_J^*$ has no arrows with head at $\infty$. The lower horizontal map in diagram \eqref{eqn:algebrahomosJ} is simply the inclusion of the algebra from \eqref{eqn:EndJ} as the subalgebra $e^\prime_J A_J e_J^\prime$ of $A_J$. To construct $\beta_J$, we need only extend $\alpha_J$ by sending the paths $e_\infty, b$ in $\CC Q_J^*$ to the classes of $e_\infty,b$ in $A_J$ respectively. Surjectivity of $\beta_J$ follows from the second statement of Lemma~\ref{lem:PiJsubalg}. This proves the claim for $0\in J$.
 
 It remains to consider the case $0\not\in J$. Define $\overline{J}\coloneqq \{0\}\cup J$ and apply the construction for the case $0\in \overline{J}$
 to obtain the diagram \eqref{eqn:algebrahomosJ} for $\overline{J}$. To define the quiver $Q_J$, we remove from $Q_{\overline{J}}$ the vertex 0 together with all arrows in $Q_{\overline{J}}$ that have head and/or tail at 0. Notice that for each $i, j\in J\subset \overline{J}$, the quiver $Q_{\overline{J}}$ already has $d_{i,j}$ arrows from $i$ to $j$ corresponding to a set of $R_0$-module generators of $\Hom_{R_0}(R_i,R_j)$,
 so the desired $\CC$-algebra epimorphism $\alpha_J$ is obtained by restriction from the map $\alpha_{\overline{J}}$ in the diagram \eqref{eqn:algebrahomosJ} for $\overline{J}$. 
 
 Next, for any $j\in J$, let $\{a_{j,m}^\prime \mid 1\leq m\leq d_{0,j}\}$ denote the arrows in $Q_{\overline J}$
corresponding to our chosen set of generators of 
$\Hom_{R_0}(R_0,R_j)$. Define the quiver $Q_J^*$ to have vertex set $\{\infty\}\cup J$ and arrow set obtained by augmenting the arrow set of $Q_J$ as follows: for each $j\in J$, introduce arrows $\{a_{j,m} \mid 1\leq m\leq d_{0,j}\}$ from $\infty$ to $j$.  Note that $\beta_{\overline{J}}(a_{j,m}^\prime b)\in e_jA_{\overline{J}} e_\infty = e_j A_J e_\infty\subset A_J$. Therefore, if for any path $p$ in $\CC Q_J^*$ we define 
 \[
 \beta_J(p) = \left\{
 \begin{array}{cr} 
 \alpha_J(p) & \text{for }p\in \CC Q_J; \\
 e_{\infty} & \text{for }p=e_\infty; \\ 
 \beta_{\overline{J}}(a_{j,m}^\prime b) & \text{for }p=a_{j,m},
 \end{array}\right. 
 \]
 then we determine uniquely a $\CC$-algebra homomorphism $\beta_J\colon \CC Q_J^*\to A_J$. To show that $\beta_J$ is surjective, it suffices to check that the image of $\beta_J$ contains $A_Je_\infty$ because $\alpha_J$ is surjective.
 For this, consider $\gamma\in A_J e_\infty\subseteq e_JA_{\overline{J}}e_\infty$. Since $\beta_{\overline{J}}$ is surjective, $\gamma$~can be represented by a linear combination of paths $\gamma_i \in Q^*_{\overline{J}}$, each with tail at $\infty$ and head at a vertex in $J$.
Every such path $\gamma_i$ necessarily begins by traversing a path of the form $a_{j,m}^\prime c b$ for some $j\in J$ and $1\leq m\leq d_{0,j}$, where $c$ is a (possibly empty)
 composition of loops at vertex $0$. Crucially, as $\beta_{\overline{J}}(c) \in \Hom_{R_0}(R_0,R_0)\cong R_0$, the element $\beta_{\overline{J}}(a_{j,m}^\prime c)\in e_jA_{\overline{J}} e_0 = \Hom_{R_0}(R_0,R_j)$ can be written as the image under $\beta_{\overline{J}}$ of a linear combination $\sum_{1\leq n\leq d_{0,j}} c_n a_{j,n}^\prime$ for some $c_n\in R_0\subseteq \Hom_{R_0}(R_j,R_j)$. Therefore the start $a_{j,m}^\prime c b$ of the path $\gamma_i$ satisfies
 \[
 \beta_{\overline{J}}(a_{j,m}^\prime cb) =
 \beta_{\overline{J}}\bigg(
 \sum_{1\leq n\leq d_{0,j}} c_n a_{j,n}^\prime
 \bigg) \beta_{\overline{J}}(b) =
 \sum_{1\leq n\leq d_{0,j}} c_n \beta_{\overline{J}} (a_{j,n}^\prime b)
 =
 \beta_{J} \bigg(\sum_{1\leq n\leq d_{0,j}} \ell_n a_{j,n}\bigg),
 \]
where each $\ell_n$ is a linear combination of loops in $Q_J^*$ at vertex $j$ satisfying $\alpha_J(\ell_n) = c_n$. 
Thus, the image in $A_J$ of the beginning of our path $\gamma_i$ lies in the image of $\beta_J$.  It follows that each path $\gamma_i$ arising in the linear combination of $\gamma$ lies in the image of $\beta_J$, because $\alpha_J$ is surjective. Therefore the image of $\beta_J$ contains $A_Je_\infty$ as required.
 \end{proof}

  \begin{remarks}
 \begin{enumerate}
     \item The quiver $Q_J^*$ that we construct in the proof above has many more arrows than necessary. For example, when $J=\{0,1,\dots,r\}$, the algorithm returns a quiver with arrow set containing at least three loops at each vertex, whereas $Q^*$ contains no loops.
     \item An alternative proof for Proposition~\ref{prop:quiver} could be given by exhibiting a finite number of paths in $Q^*$ whose tail and head both lie in the set $\{\infty\}\cup J$, with the property that their classes, up to the preprojective relations, generate the cornered algebra $A_J$. While we believe this is indeed possible, the combinatorics of the situation gets rather intricate, especially in the case $0\notin J$. The proof presented above has the advantage that it avoids case-by-case analysis of Dynkin diagrams.
 \end{enumerate}
 \end{remarks}
 
 \section{Reconstructing quiver varieties via the cornered algebras}
   In general, the quiver variety $\mathfrak{M}_{\theta_J}$ is the coarse moduli space for S-equivalence classes of $\theta_J$-semistable $\Pi$-modules of dimension vector $v$. However, in the special case $J= \{0,\dots,r\}$ it may also be regarded as the fine moduli space of isomorphism classes of $\theta_J$-stable $A$-modules of dimension vector $v$ by Lemma~\ref{lem:PiA}. 
   We now introduce an alternative, fine moduli space construction for each $\mathfrak{M}_{\theta_J}$ using the algebra $A_J$.

The element
 \[
 v_J\coloneqq\rho_\infty+\sum_{j\in J} n\dim(\rho_j)\rho_j\in \ZZ\oplus \ZZ^J
 \]
 is a dimension vector for $A_J$-modules, and we consider the stability condition $\eta_J\colon \ZZ\oplus \ZZ^J\to \mathbb{Q}$ given by 
 \[
 \eta_J(\rho_i)  = \left\{\begin{array}{cr} -\sum_{j\in J}n\dim(\rho_j) & \text{ for } i=\infty \\ 1 & \text{ if }i\in J \end{array}\right.
 \]
 It follows directly from the definition that an $A_J$-module $N$ of dimension vector $v_J$ is $\eta_J$-stable if and only if there exists a surjective $A_J$-module homomorphism $A_J e_\infty \to N$. The vector $v_J$ is indivisible and $\eta_J$ is a generic stability condition for $A_J$-modules. 

The quiver moduli space construction of King~\cite[Proposition~5.3]{King94} for finite-dimensional algebras can be adapted to any algebra presented as the quotient of a finite connected quiver by an ideal of relations. Thus,  Proposition~\ref{prop:quiver} allows us to define the fine moduli space $\mathcal{M}(A_J)$ of $\eta_J$-stable $A_J$-modules of dimension vector $v_J$. Let $T_J\coloneqq\bigoplus_{i\in \{\infty\}\cup J}T_i$ denote the tautological bundle on $\mathcal{M}(A_J)$, where $T_\infty$ is the trivial bundle and $T_j$ has rank $n\dim(\rho_j)$ for $j\in J$. The line bundle
 \[
 \mathcal{L}_J\coloneqq\bigotimes_{j\in J}\det(T_j)
 \]
 is the polarising ample bundle on $\mathcal{M}(A_J)$ given by the GIT construction. 
 
  \begin{lem}
  \label{lem:tauJ}
  Let $\theta\in C_+$, and let $J\subseteq \{0,\dots,r\}$ be any non-empty subset. There is a universal morphism 
 \begin{equation}
 \label{eqn:tauJ}
 \tau_J\colon \mathfrak{M}_\theta\to \mathcal{M}(A_J)
 \end{equation}
 satisfying  $\tau_J^*(T_i) \cong \mathcal{R}_i$ for $i\in \{\infty\}\cup J$. 
 \end{lem}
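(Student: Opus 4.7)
The plan is to construct an explicit family of $\eta_J$-stable $\Pi_J$-modules of dimension vector $v_J$ over $\mathfrak{M}_\theta$, and then invoke the universal property of $\mathcal{M}(\Pi_J)$ as a fine moduli space.

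First, I would construct the candidate family by restricting the tautological data on $\mathfrak{M}_\theta$. Set $\mathcal{R}_J \coloneqq e_J \mathcal{R} = \bigoplus_{i\in \{\infty\}\cup J} \mathcal{R}_i$, which is a locally free sheaf whose rank vector equals $v_J$ by the rank conventions recalled in Section~2 (namely, $\mathcal{R}_\infty$ is trivial and $\rank(\mathcal{R}_i) = n\dim(\rho_i)$ for $i\geq 0$). The $\CC$-algebra homomorphism $\phi\colon \Pi\to \End(\mathcal{R})$ from the universal family induces, by pre- and post-composition with multiplication by $e_J$, a homomorphism $\phi_J\colon \Pi_J = e_J\Pi e_J \to \End(\mathcal{R}_J)$. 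This endows $\mathcal{R}_J$ with the structure of a flat family of $\Pi_J$-modules of dimension vector $v_J$ over $\mathfrak{M}_\theta$.

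Next, I would verify fibrewise $\eta_J$-stability. The criterion recalled just before the lemma is that a $\Pi_J$-module $N$ of dimension vector $v_J$ is $\eta_J$-stable if and only if it is generated by $e_\infty$, i.e.\ there exists a surjection $\Pi_J e_\infty\to N$. Because $\theta\in C_+$, every fibre $M_x$ of the universal family is a $\theta$-stable $\Pi$-module of dimension vector $v$, and by \cite[Corollary~2.4]{CIK18} (which is the same input already cited in the proof of Lemma~\ref{lem:ampleVGIT}) the module $M_x$ is generated by the one-dimensional subspace $e_\infty M_x$; equivalently there is a surjection $\Pi e_\infty\to M_x$ of $\Pi$-modules. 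Applying the exact functor $e_J\cdot(-)$ and using the identification $e_J\Pi e_\infty = e_J\Pi e_J e_\infty = \Pi_J e_\infty$, we obtain a surjection $\Pi_J e_\infty\to e_J M_x = (\mathcal{R}_J)_x$ of $\Pi_J$-modules, so $(\mathcal{R}_J)_x$ is $\eta_J$-stable.

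Finally, since $v_J$ is indivisible and $\eta_J$ is generic for $\Pi_J$-modules, King's construction \cite[Proposition~5.3]{King94} realises $\mathcal{M}(\Pi_J)$ as a fine moduli space carrying a universal family with tautological summand $T_\infty$ trivial of rank one. The family $(\mathcal{R}_J,\phi_J)$ on $\mathfrak{M}_\theta$ therefore determines a unique morphism $\tau_J\colon \mathfrak{M}_\theta\to \mathcal{M}(\Pi_J)$ such that $\tau_J^*(T_i)\cong \mathcal{R}_i$ for all $i\in \{\infty\}\cup J$, the normalisation being fixed by the triviality of $\mathcal{R}_\infty$ and $T_\infty$. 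The only substantive step is the verification of pointwise $\eta_J$-stability; this reduces cleanly to the global generation property of the tautological bundles for $\theta\in C_+$, which has already been used in Lemma~\ref{lem:ampleVGIT}, so no further case analysis is required.
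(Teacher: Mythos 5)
Your proof is correct and follows essentially the same route as the paper: restrict the tautological sheaf and algebra homomorphism by the idempotent $e_J$ to obtain a flat family $(\mathcal{R}_J,\phi_J)$ of $\Pi_J$-modules of dimension vector $v_J$, verify fibrewise $\eta_J$-stability via the ``generated at $\infty$'' criterion by applying $e_J$ to a surjection $\Pi e_\infty\to M_x$, and invoke the universal property of $\mathcal{M}(\Pi_J)$. One small slip: the fact that each fibre $M_x$ is cyclic with generator in $e_\infty M_x$ follows directly from $\theta$-stability together with $v_\infty=1$ and $\theta\in C_+$ (any proper submodule containing $e_\infty M_x$ would have $\theta(\dim)\leq 0$); the citation to \cite[Corollary~2.4]{CIK18} is not the right reference for this — that result concerns global generation of the tautological bundles, which the paper invokes only in a remark sketching an alternative proof via \cite[Proposition~2.3]{CIK18}.
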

 \begin{proof}
 In light of the universal property of $\mathcal{M}(A_J)$, it suffices to show that the locally-free sheaf 
   \[
   \mathcal{R}_J\coloneqq \bigoplus_{i\in \{\infty\}\cup J} \mathcal{R}_i
   \]
 of rank $1+\sum_{j\in J} n\dim(\rho_j)$ on 
 the quiver variety $\mathfrak{M}_\theta$ is a flat family of $\eta_J$-stable $A_J$-modules of dimension vector $v_J$. We saw in Lemma~\ref{lem:PiA} that we may delete one arrow from $Q$, giving rise to a $\CC$-algebra homomorphism $\phi\colon A\to \End(\mathcal{R})$. Multiplying this on the left and right by the idempotent $e_J$ determines a $\CC$-algebra homomorphism $A_J\to \End(\mathcal{R}_J)$ which makes $\mathcal{R}_J$ into a flat family of $A_J$-modules of dimension vector $v_J$. To establish stability, write $\bigoplus_{i\in Q_0} \mathcal{R}_{i,y}$ for the fibre of $\mathcal{R}$ over a closed point $y\in \mathfrak{M}_\theta$. The fact that $\bigoplus_{i\in Q_0} \mathcal{R}_{i,y}$ is $\theta$-stable is equivalent to the existence of a surjective $A$-module homomorphism $A e_\infty\to \bigoplus_{i\in Q_0} \mathcal{R}_{i,y}$. Applying $e_J$ on the left produces a surjective $A_J$-module homomorphism $A_J e_\infty\to \bigoplus_{i\in \{\infty\}\cup J} \mathcal{R}_{i,y}$ which in turn is equivalent to $\eta_J$-stability of the fibre $\bigoplus_{i\in \{\infty\}\cup J} \mathcal{R}_{i,y}$ of $\mathcal{R}_J$ over $y\in \mathfrak{M}_\theta$. In particular, $\mathcal{R}_J$ is a flat family of $\eta_J$-stable $A_J$-modules of dimension vector $v_J$.
 \end{proof}
 
 \begin{remarks}
 \begin{enumerate}
 \item An alternative proof of Lemma~\ref{lem:tauJ} uses the fact that the tautological bundles $\mathcal{R}_i$ on $\mathfrak{M}_\theta$ are globally generated for $i\in I$ by \cite[Corollary~2.4]{CIK18}, in which case one can adapt the proof of \cite[Proposition~2.3]{CIK18} to deduce that $\mathcal{R}_J$ is a flat family of $\eta_J$-stable $A_J$-modules of dimension vector $v_J$. In particular, global generation is the key feature in Lemma~\ref{lem:tauJ}, just as in the proof of Lemma~\ref{lem:ampleVGIT}. This is not a coincidence; see Theorem~\ref{thm:coarsetofine}.
 \item Building on Remark~\ref{rem:highmultiple}, we now take an even higher multiple of $\theta$ if necessary (and the same high multiple of each $\eta_J$ and each $\theta_J$) to ensure that the polarising ample line bundles on $\mathcal{M}(A_J)$ and on $\mathfrak{M}_{\theta_J}$ are very ample for all relevant $J\subseteq \{0,\dots, r\}$.
 \end{enumerate}
 \end{remarks}

 \begin{lem}
 \label{lem:5varieties}
  Let $\theta\in C_+$ and assume $J\subseteq \{0,\dots,r\}$ is non-empty. There is a commutative diagram
   \begin{equation}
 \label{eqn:5varieties}
\begin{tikzcd}
 & \mathfrak{M}_\theta\ar[dl,swap,"{\pi_J}"] \ar[dr,"{\tau_J}"]\ar[dd,"\varphi_{\vert L_J\vert}"] & \\   \mathfrak{M}_{\theta_J}\ar[rd,hook] & & \mathcal{M}(A_J)\ar[d,hook,"\varphi_{\vert \mathcal{L}_J\vert}"] \\
 & \vert L_J\vert \ar[r,"\psi"]& \vert \mathcal{L}_J\vert
 \end{tikzcd}
\end{equation}
  of schemes over $\Sym^n(\mathbb{C}^2/\Gamma)$,  where $\psi$ is an isomorphism. 
   \end{lem}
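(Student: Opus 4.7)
The strategy is in three stages: construct $\psi$, check commutativity, and prove $\psi$ is an isomorphism.

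First, the line bundle identity $\tau_J^{*}\mathcal{L}_J \cong L_J$ follows by pulling back the defining formula $\mathcal{L}_J=\bigotimes_{j\in J}\det T_j$ and invoking the identifications $\tau_J^{*}T_i \cong \mathcal{R}_i$ from Lemma~\ref{lem:tauJ}, since $L_J=\bigotimes_{j\in J}\det\mathcal{R}_j$ by definition (using $(\theta_J)_j=1$ for $j\in J$ and $0$ otherwise). Pullback on sections of all tensor powers assembles into a graded $\mathcal{O}_Y$-algebra homomorphism $\bigoplus_k H^0(\mathcal{M}(\Pi_J),\mathcal{L}_J^k) \to \bigoplus_k H^0(\mathfrak{M}_\theta,L_J^k)$ over $Y \coloneqq \Sym^n(\CC^2/\Gamma)$, and taking $\Proj_Y$ produces the desired morphism $\psi \colon |L_J| \to |\mathcal{L}_J|$. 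The map is everywhere defined because $\mathcal{L}_J$ is (very) ample on $\mathcal{M}(\Pi_J)$ (Remark~\ref{rem:highmultiple}) and $\tau_J$ is a morphism, so the pulled-back sections of $\mathcal{L}_J$ have no common zero on $\mathfrak{M}_\theta$, hence no base locus on the image of $\varphi_{|L_J|}$.

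Commutativity of the diagram is then almost tautological: the left triangle is the statement of Lemma~\ref{lem:ampleVGIT}\two, and the outer commutativity involving $\psi$, $\varphi_{|L_J|}$, $\tau_J$ and $\varphi_{|\mathcal{L}_J|}$ follows by construction, since both $\varphi_{|\mathcal{L}_J|}\circ\tau_J$ and $\psi\circ\varphi_{|L_J|}$ are the morphism $\mathfrak{M}_\theta\to |\mathcal{L}_J|$ determined by the line bundle $L_J$ together with the subspace of sections $\tau_J^{*}H^0(\mathcal{L}_J^k)\subseteq H^0(L_J^k)$.

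The main content is showing $\psi$ is an isomorphism. The plan is to identify $|L_J|\cong\mathfrak{M}_{\theta_J}$ and $|\mathcal{L}_J|\cong\mathcal{M}(\Pi_J)$, which reduces $\psi$ to the cornering morphism $[M]\mapsto [e_J M]$. Since $\pi_J$ is projective and birational onto the normal scheme $\mathfrak{M}_{\theta_J}$ (Lemmas~\ref{lem:VGIT} and~\ref{lem:Mthetageometry}), one has $(\pi_J)_{*}\mathcal{O}_{\mathfrak{M}_\theta}=\mathcal{O}_{\mathfrak{M}_{\theta_J}}$ by Zariski's Main Theorem, whence $H^0(\mathfrak{M}_\theta,L_J^k)\cong H^0(\mathfrak{M}_{\theta_J},A^k)$ for the very ample polarising bundle $A$ on $\mathfrak{M}_{\theta_J}$ (cf.\ Remark~\ref{rem:highmultiple}); thus $|L_J|\cong\mathfrak{M}_{\theta_J}$. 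An analogous identification $|\mathcal{L}_J|\cong\mathcal{M}(\Pi_J)$ holds once one verifies that $\tau_J$ is birational and that $\mathcal{M}(\Pi_J)$ is normal. In this form, $\psi$ is a projective, birational morphism of normal irreducible schemes, so an isomorphism iff it is bijective on closed points. The hard step — and the main obstacle — is this bijection: one must show that cornering $M\mapsto e_J M$ gives a one-to-one correspondence between $\theta_J$-polystable $\Pi$-modules of dimension vector $v$ and $\eta_J$-stable $\Pi_J$-modules of dimension vector $v_J$. Surjectivity of cornering is the delicate direction, because given an $\eta_J$-stable $N$ one must reconstruct a $\theta_J$-polystable $M$ with $e_J M \cong N$ whose dimension at every vertex outside $J$ matches $v$; controlling these dimensions is exactly the dimension bound for $\theta_J$-stable representations established by the case-by-case analysis of Appendix~A.
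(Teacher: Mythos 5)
Your construction of $\psi$ and the verification of commutativity are correct and essentially agree with the paper: the pullback formula $\tau_J^{*}\mathcal{L}_J\cong L_J$ is exactly equation~\eqref{eqn:quadmorphisms}, and $\psi$ is obtained from the graded ring homomorphism $\tau_J^*$. The paper phrases the existence and uniqueness of $\psi$ via the fact that a morphism to a complete relative linear series is determined by the pulled-back line bundle, but the content is the same as your $\Proj_Y$ construction. However, you do not check the final assertion of the lemma, namely that the entire diagram lives over $\Sym^n(\CC^2/\Gamma)$; the paper does this by a Leray spectral sequence / rational singularities argument identifying $\pi=\varphi_{\vert\mathcal{O}_{\mathfrak{M}_\theta}\vert}$ and then repeating the section-comparison with trivial bundles.

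The serious problem is with your strategy for showing $\psi$ is an isomorphism. You reduce this to a bijection on closed points coming from cornering $M\mapsto e_J M$, and appeal to the dimension bound in Appendix~A. This is not what Lemma~\ref{lem:5varieties} is: in the paper's logical structure, Lemma~\ref{lem:5varieties} is an essentially formal cohomological statement that \emph{feeds into} Theorem~\ref{thm:coarsetofine}, and it is only there (via Lemma~\ref{lem:existsemistable} and Appendix~A) that surjectivity of $\iota_J$ on closed points is proved. Worse, your argument is circular as written: you say the identification $\vert\mathcal{L}_J\vert\cong\mathcal{M}(\Pi_J)$ "holds once one verifies that $\tau_J$ is birational and that $\mathcal{M}(\Pi_J)$ is normal," but birationality of $\tau_J$ and normality of $\mathcal{M}(\Pi_J)$ are both \emph{consequences} of Theorem~\ref{thm:coarsetofine}, whose proof uses the present lemma (in particular it uses that $\psi$ is an isomorphism to conclude $\sigma_J$ and hence $\iota_J$ are closed immersions). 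The paper instead deduces the isomorphism $\psi$ directly from the equality of pulled-back polarisations in \eqref{eqn:quadmorphisms} together with uniqueness of the morphism to a complete linear series, with no module theory and no appeal to the Appendix. If you want to write a self-contained proof of this lemma, you must not invoke the cornering bijection, the Appendix, or any property of $\mathcal{M}(\Pi_J)$ that is only established downstream.
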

 \begin{proof}
  The commutative triangle on the left of \eqref{eqn:5varieties} was constructed in Lemma~\ref{lem:ampleVGIT}. For the quadrilateral on the right, our choice of $\eta_J$ ensures that the polarising line bundle $\mathcal{L}_J$ on $\mathcal{M}(A_J)$ is very ample, so the morphism $\varphi_{\vert \mathcal{L}_J\vert}$ is well-defined. Since pullback commutes with tensor operations on the $T_i$,  the isomorphisms $\tau_J^*(T_i) \cong \mathcal{R}_i$ for $i\in J$ imply that $L_J = \tau_J^*(\mathcal{L}_J)$. If $\mathcal{O}_{\vert \mathcal{L}_J\vert}(1)$ denotes the polarising ample bundle on $\vert \mathcal{L}_J\vert$, then
  \begin{equation}
      \label{eqn:quadmorphisms}
  (\varphi_{\vert \mathcal{L}_J\vert}\circ \tau_J)^*(\mathcal{O}_{\vert \mathcal{L}_J\vert}(1)) = \tau_J^*(\mathcal{L}_J) = L_J = \varphi^*_{\vert L_J \vert}\big(\mathcal{O}_{\vert L_J\vert}(1)\big)
  \end{equation}
 on $\mathfrak{M}_\theta$.  The morphism to a complete linear series is unique up to an automorphism of the linear series, so there is an isomorphism $\psi\colon\vert L_J\vert\to\vert \mathcal{L}_J\vert$ such that $\varphi_{\vert \mathcal{L}_J\vert}\circ \tau_J = \psi\circ \varphi_{\vert L_J\vert}$ as required. 
 
 It remains to show that \eqref{eqn:5varieties} is a diagram of schemes over $\Sym^n(\CC^{2}/\Gamma)$. The Leray spectral sequence for the resolution $\pi\colon \mathfrak{M}_\theta\to \mathfrak{M}_0\cong \Sym^n(\CC^{2}/\Gamma)$ gives $H^0(\mathcal{O}_{\mathcal{M}_\theta})\cong H^0(\mathcal{O}_{\mathcal{M}_0})\cong (\CC[V]^\Gamma)^{\mathfrak{S}_n}$ because $\Sym^n(\CC^{2}/\Gamma)$ has rational singularities. It follows that $\pi = \varphi_{\vert \mathcal{O}_{\mathfrak{M}_\theta}\vert}$, i.e. $\pi$ is the structure morphism of $\mathfrak{M}_\theta$ as a variety over $\Sym^n(\CC^{2}/\Gamma)$. Repeating the argument from \eqref{eqn:quadmorphisms}, with the roles of $L_J, \mathcal{L}_J$ and $\mathcal{O}_{\vert \mathcal{L}_J\vert}(1)$ played instead by the trivial bundles on $\mathfrak{M}_\theta$, $\mathcal{M}(A_J)$ and $\Sym^n(\CC^{2}/\Gamma)$ respectively, shows that $\mathcal{M}(A_J)$ is a scheme over $\Sym^n(\CC^{2}/\Gamma)$. It follows that \eqref{eqn:5varieties} is a diagram of schemes over $\Sym^n(\CC^{2}/\Gamma)$.
 \end{proof}
 
   Our goal for the rest of this section is to add a morphism
  $\iota_J\colon \mathfrak{M}_{\theta_J}\to \mathcal{M}(A_J)$ to diagram \eqref{eqn:5varieties} and to show that $\iota_J$ is an isomorphism on the underlying reduced schemes. Consider the functors
  \[
\begin{tikzpicture}
\node (v1) at (0,0)  {$\scriptsize{A\module}$};
\node (v2) at (4,0)  {$\scriptsize{A_J\module}$};

\draw [->,bend right=-5] (v1) to node[gap] {$\scriptstyle{j^*}$} (v2);
\draw [->,bend left=5] (v2) to node[gap] {$\scriptstyle{j_{!}}$} (v1);
\end{tikzpicture}
\]
  defined by $j^*(-)\coloneqq e_JA\otimes_A (-)$ and $j_!(-)\coloneqq  A e_J\otimes_{A_J}(-)$. These are two of the six functors in a recollement of the module category $A\module$ \cite{FP04}. In particular, $j^*$ is exact, $j^*j_!$ is the identity functor, and for any $A_J$-module $N$, the $A$-module $j_!(N)$ is the maximal extension by $A/(A e_JA)$-modules; see \cite[(3.4)]{CIK18}.

 \begin{lem}
 \label{lem:verbatim}
  Let $N$ be an $A_J$-module of dimension vector $v_J$.
  \begin{enumerate}
      \item[\one] If there exists a surjective $A_J$-module homomorphism $A_Je_\infty\to N$, then there exists a surjective $A$-module homomorphism $Ae_\infty \to j_!(N)$.
      \item[\two] The $A$-module $j_!(N)$ is finite-dimensional and satisfies $\dim_i j_!(N) = \dim_i N $ for all $i\in  \{\infty\}\cup J$.
  \end{enumerate} 
  \end{lem}
  \begin{proof}
  The construction of the quiver from Proposition~\ref{prop:quiver} shows that $A$ is a finitely generated module over the algebra $R_0 \cong e_0 A e_0$. Armed with this observation, the proof of \cite[Lemma~3.6]{CIK18} applies verbatim (the notation differs slightly for part \one: our map $A_Je_\infty\to N$ is written $A_Ce_0^\prime\to N$ in \emph{ibid.}).
  \end{proof}

  \begin{lem}
  \label{lem:recollement}
  Let $N$ be an $\eta_J$-stable $A_J$-module of dimension vector $v_J$. The $A$-module $j_!(N)$ is $\theta_J$-semistable.
  \end{lem}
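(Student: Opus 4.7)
The plan is to reduce $\theta_J$-semistability of $j_!(N)$ as a $\Pi$-module to the $\eta_J$-semistability of $N$ as a $\Pi_J$-module, using exactness of $j^*$ together with the identity $j^*j_! = \mathrm{id}$ from the recollement.

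First I would record the numerical compatibility between the two stability parameters. Inspecting the definitions, $\theta_J$ vanishes at every vertex in $Q_0\setminus(\{\infty\}\cup J)$ and agrees with $\eta_J$ on $\{\infty\}\cup J$. Since $j^*$ is given by left multiplication by $e_J$ and thus restricts a $\Pi$-module to its components at the vertices $\{\infty\}\cup J$, this yields
\[
\theta_J(\dim M) \;=\; \eta_J(\dim j^*(M))
\]
for every $\Pi$-module $M$. Applying this to $M = j_!(N)$ together with $j^*j_! = \mathrm{id}$ gives $\theta_J(\dim j_!(N)) = \eta_J(\dim N) = 0$, which verifies the numerical equality required for semistability.

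For a nonzero proper $\Pi$-submodule $M'\subset j_!(N)$, exactness of $j^*$ produces a $\Pi_J$-submodule $j^*(M')\subseteq N$. The $\eta_J$-stability of $N$ implies $\eta_J(\dim j^*(M'))\geq 0$ in every case (whether $j^*(M')$ is zero, a proper nonzero submodule, or all of $N$), and so by the compatibility $\theta_J(\dim M')\geq 0$, establishing $\theta_J$-semistability. The step I expect to be trickiest is the numerical compatibility: one must carefully check that $\theta_J(\rho_i)=0$ for every $i\notin \{\infty\}\cup J$, so that the (possibly nonzero) components of $j_!(N)$ at those vertices do not spoil the inequality. Everything else is formal; notably, the equivalent characterisation of $\eta_J$-stability by a surjection $\Pi_J e_\infty\to N$ is not needed here—only semistability of $N$, which is implied by stability, is used.
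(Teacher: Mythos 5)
Your argument is correct, but it takes a genuinely different route from the paper's. The paper invokes the cyclicity characterisation of stability: it lifts the surjection $\Pi_Je_\infty\to N$ (via \cite[Lemma~3.6]{CIK18}) to a surjection $\Pi e_\infty\to j_!(N)$, then argues that any proper submodule $M\subset j_!(N)$ must have $\dim_\infty M=0$ (else it contains the generator and equals $j_!(N)$), whence $\theta_J(\dim M)\geq 0$ because $\theta_J(\rho_i)\geq 0$ for $i\neq\infty$. You instead use exactness of $j^*$ to embed $j^*(M')$ as a submodule of $N$ and then pull back the nonnegativity from $\eta_J$-semistability of $N$ via the numerical compatibility $\theta_J(\dim M) = \eta_J(\dim j^*(M))$. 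Your observation that only semistability of $N$ is needed is accurate, and it is a genuine simplification: the paper's argument uses stability more essentially through the cyclic generator. The one point you elide — and the paper also needs, citing \cite[Lemma~3.6]{CIK18} — is that $j_!(N)=\Pi e_J\otimes_{\Pi_J}N$ is finite-dimensional (so that $\dim j_!(N)$ and $\theta_J(\dim j_!(N))$ make sense at all); this does not follow formally since $\Pi e_J$ is not finite-dimensional, and rests on the fact that $\Pi/(\Pi e_J\Pi)$ is a preprojective algebra of a union of finite Dynkin types. With that ingredient supplied, your proof is correct and arguably cleaner.
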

   \begin{proof}
 Since $N$ is $\eta_J$-stable, there is a surjective $A_J$-module homomorphism $A_J e_\infty \to N$. Lemma~\ref{lem:verbatim} 
gives a surjective $A$-module homomorphism $A e_\infty \to j_!(N)$ and, moreover, 
 the finite dimensional $A$-module $j_!(N)$ satisfies $\dim_i j_!(N) = \dim_i N$ for $i\in \{\infty\}\cup J$. 
 Recall that  $\theta_J(\rho_i) = 0$ for $i\not\in \{\infty\}\cup J$, so 
  \[
  \theta_J\big(j_!(N)\big) = \theta_J\left(\sum_{i\in \{\infty\}\cup J} \dim_i(j_!(N))\rho_i\right) = \eta_J\left(\sum_{i\in \{\infty\}\cup J} \dim_i(N)\rho_i\right) = \eta_J(N) = 0.
  \]
Now let $M\subset j_!(N)$ be a proper submodule. If $\dim_\infty M=1$, then surjectivity of the map $A e_\infty \to j_!(N)$ gives $M=j_!(N)$ which is absurd, so $\dim_\infty M=0$. But $\theta_J(\rho_i)\geq 0$ for all $i\neq \infty$, so $\theta_J(M)\geq 0$ as required.
\end{proof}

  \begin{lem}
  \label{lem:existsemistable}
 Let $N$ be an $\eta_J$-stable $A_J$-module of dimension vector $v_J$.  Then there exists a $\theta_J$-semistable $A$-module $M$ such that $j^*M \cong N$ and $\dim_i M \leq  n\dim(\rho_i)$ for all $i\not\in \{\infty\}\cup J$.
    \end{lem}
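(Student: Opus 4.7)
The plan is to build $M$ as a quotient of $M_0 \coloneqq j_!(N)$ by trimming its excess at vertices outside $\{\infty\}\cup J$. By Lemma~\ref{lem:recollement}, $M_0$ is $\theta_J$-semistable; the recollement identity $j^*j_!=\mathrm{id}$ gives $j^*M_0\cong N$; and the proof of that lemma also records $\dim_i M_0 = \dim_i N$ for all $i\in \{\infty\}\cup J$. A priori there is no control of $\dim_i M_0$ at the remaining vertices. Let $M' \subseteq M_0$ be the sum of all $\Pi$-submodules $L\subseteq M_0$ with $j^*L = 0$, i.e.\ those supported at vertices outside $\{\infty\}\cup J$. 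This family is closed under sums (since $j^*$ is additive), so $M'$ is the unique maximal submodule of $M_0$ with $j^*M'=0$. Set $M\coloneqq M_0/M'$; exactness of $j^*$ then gives $j^*M\cong j^*M_0\cong N$, as required.

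We next check that $M$ is $\theta_J$-semistable. Since $\theta_J(\rho_i)=0$ for every $i\notin \{\infty\}\cup J$ and $M'$ is supported at such vertices, $\theta_J(M')=0$; additivity on dimension vectors yields $\theta_J(M)=\theta_J(M_0)-\theta_J(M')=0$. For any proper nonzero submodule $L\subseteq M$, its preimage $\widetilde{L}\subseteq M_0$ under the quotient map satisfies $M'\subsetneq \widetilde{L}\subsetneq M_0$; in particular $\widetilde{L}$ is a proper nonzero submodule of $M_0$, so $\theta_J$-semistability of $M_0$ gives $\theta_J(\widetilde{L})\ge 0$, whence $\theta_J(L)=\theta_J(\widetilde{L})-\theta_J(M')=\theta_J(\widetilde{L})\ge 0$.

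The remaining task, and the main obstacle, is the dimension bound $\dim_i M \le n\dim(\rho_i)$ for $i\notin \{\infty\}\cup J$. Maximality of $M'$ ensures that $M$ has no nonzero submodule supported entirely at vertices outside $\{\infty\}\cup J$, which already restricts $M$ substantially; but this formal property does not by itself yield the numerical bound. Instead, the required estimate---essentially a uniform bound on the dimension vector of any $\theta_J$-semistable $\Pi$-module with $\dim_\infty=1$ and $\dim_j=n\dim(\rho_j)$ for $j\in J$---is proved by a case-by-case analysis across the ADE types of $\Gamma$ in Appendix~A, and applying it to the module $M$ constructed above completes the proof.
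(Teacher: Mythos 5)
Your construction $M \coloneqq j_!(N)/M'$, where $M'$ is the maximal submodule of $j_!(N)$ with $j^*M'=0$, is a genuinely different route from the paper, which instead passes to the $\theta_J$-polystable representative of $j_!(N)$ and takes the unique stable summand $M_{\lambda_\infty}$ with $\dim_\infty=1$. The two modules in fact coincide, and your proofs of $j^*M\cong N$ and of $\theta_J$-\emph{semi}stability of $M$ are correct. But the final step has a real gap.

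You describe the bound in Appendix~A as ``a uniform bound on the dimension vector of any $\theta_J$-\emph{semistable} $\Pi$-module with $\dim_\infty=1$ and $\dim_j=n\dim(\rho_j)$ for $j\in J$.'' That is not what Proposition~\ref{prop:dimensionEstimateStableRep} says: it bounds the dimension vector of a $\theta_J$-\emph{stable} representation, and stability is used in an essential way (for instance in the proof of Proposition~\ref{lem:taken from CrawBoe}, where proper nonzero submodules with $\theta_J=0$ must be excluded). The semistable version of the bound is simply false: given any module $M$ satisfying the hypotheses, $M\oplus S_i^{\oplus m}$ for a vertex simple $S_i$ at $i\notin\{\infty\}\cup J$ is still $\theta_J$-semistable with the same $\dim_\infty$ and $\dim_j$ for $j\in J$, yet $\dim_i$ is unbounded. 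So your appeal to Appendix~A is unjustified as written; you must show $M$ is $\theta_J$-\emph{stable}.

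This can be salvaged, and in a way that makes your route work end to end, but it requires an extra observation you have not made. Suppose $L\subsetneq M$ is nonzero with $\theta_J(L)=0$. If $\dim_\infty L=0$, then $\theta_J(L)=\sum_{j\in J}\dim_j L=0$ forces $L$ to be supported outside $\{\infty\}\cup J$; this is excluded by maximality of $M'$, as you note. But if $\dim_\infty L=1$, then $\theta_J(L)=0$ forces $\dim_j L=n\dim(\rho_j)=\dim_j M$ for all $j\in J$, so $M/L$ is a \emph{nonzero quotient} of $M$ supported outside $\{\infty\}\cup J$. Ruling this out requires the other half of the recollement property of $j_!$: since $j_!(N)=\Pi e_J\otimes_{\Pi_J}N$ is generated as a $\Pi$-module by $e_J\cdot j_!(N)$ (equivalently, $i^*j_!=0$), the module $j_!(N)$, and hence its quotient $M$, has no nonzero quotient supported outside $\{\infty\}\cup J$. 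Only with both facts --- no submodule and no quotient supported outside --- do you obtain stability of $M$, after which Appendix~A applies. The paper's polystable argument gets stability for free, which is why it sidesteps this issue.
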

    \begin{proof} By Lemma~\ref{lem:recollement}, $j_!(N)$ is $\theta_J$-semistable. If $\dim_i j_!(N) \leq  n\dim(\rho_i)$ for $i\not\in \{\infty\}\cup J$, then we can simply set $M\coloneqq j_!(N)$, as $j^*j_!$ is the identity. Otherwise, consider the $\theta_J$-polystable module $\bigoplus_{\lambda} M_\lambda$ that is $S$-equivalent to $j_!(N)$. Let $M_{\lambda_\infty}$ denote the unique summand satisfying $\dim_\infty M_{\lambda_\infty} =1$. Since $M_{\lambda_\infty}$ is by construction a $\theta_J$-stable $A$-module, it follows that $\dim_i M_{\lambda_\infty} =n\dim(\rho_i)$ for all $i\in J$, and hence $\dim_i M_\lambda =0$ for all $\lambda\neq {\lambda_\infty}$ and all $i\in \{\infty\}\cup J$. For each index $\lambda$ and for all $i\in \{\infty\}\cup J$, we have
\[
\dim_i j^*M_\lambda = \dim e_i \big(e_J A\otimes_{A}(M_\lambda)\big) = \dim e_i A\otimes_A M_\lambda = \dim_i M_\lambda.
\]
 It follows that $\dim_i j^*M_\lambda = 0$ for all $\lambda\neq {\lambda_\infty}$ and $i\in \{\infty\}\cup J$, and hence
 $j^*M_\lambda=0$ for $\lambda\neq {\lambda_\infty}$.

  We claim that $j^*M_{\lambda_\infty}$ is isomorphic to $N$. Indeed, the $A$-module $j_!(N)$ is $\theta_J$-semistable by Lemma~\ref{lem:recollement}, and the $\theta_J$-stable $A$-modules $M_\lambda$ are by construction the factors in the composition series of $j_!(N)$ in the category of $\theta_J$-semistable $A$-modules. It follows from exactness of $j^*$ that the $A_J$-modules $j^*M_\lambda$ are the factors in the composition series of $j^*j_!(N)\cong N$ in the category of $\eta_J$-semistable $A_J$-modules. But $j^*M_\lambda=0$ for $\lambda\neq {\lambda_\infty}$, so the only nonzero factor of the composition series is $j^*M_{\lambda_\infty}$. It follows that $j^*M_{\lambda_\infty}\cong N$, because the factor $j^*M_{\lambda_\infty}$ can only appear once in the composition series.
  
  As a result, the $\theta_J$-stable $A$-module $M_{\lambda_\infty}$ satisfies  $j^*M_{\lambda_\infty}\cong N$ and $\dim_i M_{\lambda_\infty} =n\dim(\rho_i)$ for all $i\in J$. Therefore $M_{\lambda_\infty}$ is the required $A_J$-module as long as $\dim_i M_{\lambda_\infty} \leq  n\dim(\rho_i)$ for $i\not\in \{\infty\}\cup J$. We establish this key inequality in Appendix~\ref{sec:the ugly proof}.
\end{proof}

\begin{remark}\label{rem:leftwithDynkin} The modules $M_\lambda$ for $\lambda\ne \lambda_\infty$ in the proof of Lemma~\ref{lem:existsemistable} are in fact all 1-dimensional vertex simples. 
 To see this, note that removing any non-empty set of vertices and their incident edges from an extended Dynkin diagram gives a diagram in which every connected component is Dynkin of finite type. 
 Thus removing the vertices $\{\infty\}\cup J$ and all incident edges from the framed extended diagram leaves us with a collection of Dynkin diagrams of finite type. Choose $\lambda\ne \lambda_\infty$. Since $\dim_j M_\lambda =0$ for all $j\in \{\infty\}\cup J$, $M_\lambda$ is a simple module of the preprojective algebra of a quiver of finite type. But such modules are one-dimensional by \cite[Lemma~2.2]{SavTing}.
 \end{remark}

 \begin{thm}
 \label{thm:coarsetofine}
 For any non-empty $J\subseteq \{0,\dots,r\}$, there is a commutative diagram of morphisms
 \begin{equation}
 \label{eqn:isomDiagram}
 \begin{tikzcd}
 & \mathfrak{M}_\theta\ar[dl,swap,"{\pi_J}"] \ar[dr,"{\tau_J}"] & \\
  \mathfrak{M}_{\theta_J}\ar[rr,"{\iota_J}"] & & \mathcal{M}(A_J),
 \end{tikzcd}
\end{equation}
  where $\iota_J$ is an isomorphism of the underlying reduced schemes. In particular, $\mathcal{M}(A_J)$ is irreducible, and its underlying reduced scheme is normal and has symplectic singularities.
 \end{thm}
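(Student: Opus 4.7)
The plan is to construct $\iota_J$ from the diagram in Lemma~\ref{lem:5varieties}, verify that it is a closed immersion that is bijective on closed points, and then transfer the geometric properties from $\mathfrak{M}_{\theta_J}$ via Lemma~\ref{lem:Mthetageometry}.

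First I would observe that Lemma~\ref{lem:5varieties} already does most of the work. Because $\pi_J$ is surjective by Lemma~\ref{lem:VGIT}, the scheme-theoretic image of $\varphi_{|L_J|}\colon \mathfrak{M}_\theta\to |L_J|$ is the closed subscheme $\mathfrak{M}_{\theta_J}\subseteq |L_J|$. Transporting by the isomorphism $\psi$, this image sits inside $|\mathcal{L}_J|$ as the scheme-theoretic image of $\varphi_{|\mathcal{L}_J|}\circ \tau_J = \psi\circ \varphi_{|L_J|}$, which factors through the closed subscheme $\mathcal{M}(\Pi_J)$. Restricting $\psi$ therefore produces a closed immersion $\iota_J\colon \mathfrak{M}_{\theta_J}\hookrightarrow \mathcal{M}(\Pi_J)$, and $\iota_J\circ \pi_J=\tau_J$ falls out of tracing the diagram in Lemma~\ref{lem:5varieties}.

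The second step is surjectivity on closed points. Let $N$ be an $\eta_J$-stable $\Pi_J$-module of dimension vector $v_J$. Lemma~\ref{lem:existsemistable} yields a $\theta_J$-stable $\Pi$-module $M_0$ with $j^*M_0\cong N$ and $\dim_i M_0\leq n\dim(\rho_i)$ for each $i\notin \{\infty\}\cup J$. I would then pad by the one-dimensional vertex simples $S_i$ at the remaining vertices, each of which is $\theta_J$-stable since $\theta_J(\rho_i)=0$ for those $i$, to produce a $\theta_J$-polystable $\Pi$-module
\[
M\coloneqq M_0\oplus\bigoplus_{i\notin\{\infty\}\cup J} S_i^{\oplus(n\dim(\rho_i)-\dim_i M_0)}
\]
of dimension vector $v$, hence a closed point $[M]\in \mathfrak{M}_{\theta_J}$. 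To identify $\iota_J([M])$ I would lift to a $\theta$-stable point $[M^*]\in \mathfrak{M}_\theta$ with $\pi_J([M^*])=[M]$, using surjectivity of $\pi_J$; then $M^*$ and $M$ are $S$-equivalent as $\theta_J$-semistable modules, and since $j^*$ is exact and annihilates the padding simples, $j^*M^*$ is $S$-equivalent to $N$ in the $\eta_J$-semistable category. Both modules being $\eta_J$-stable, they are isomorphic, giving $\iota_J([M])=\tau_J([M^*])=[N]$.

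A closed immersion from a reduced scheme that is bijective on closed points identifies the source with the underlying reduced scheme of the target, so $\iota_J$ realises $\mathcal{M}(\Pi_J)_{\rm red}\cong \mathfrak{M}_{\theta_J}$, and Lemma~\ref{lem:Mthetageometry} transfers irreducibility, normality, and symplectic singularities. The only genuinely hard input is the dimension bound $\dim_i M_0\leq n\dim(\rho_i)$ in Lemma~\ref{lem:existsemistable}, without which one cannot pad $M_0$ to hit the dimension vector $v$; this is exactly what is deferred to the case-by-case analysis in Appendix~A, and is the sole non-conceptual step in the argument.
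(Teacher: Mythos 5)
Your proposal is correct and follows essentially the same route as the paper: construct $\iota_J$ as a closed immersion from the diagram in Lemma~\ref{lem:5varieties} using surjectivity of $\pi_J$ (and reducedness of $\mathfrak{M}_{\theta_J}$), then deduce surjectivity on closed points via Lemma~\ref{lem:existsemistable}, padding with vertex simples and lifting through $\pi_J$. One small caveat: Lemma~\ref{lem:existsemistable} as stated only produces a $\theta_J$-\emph{semistable} $M_0$, not a $\theta_J$-stable one, so the padded module need not be polystable — but your $S$-equivalence argument (that $j^*M^*$ and $N$ are $S$-equivalent and both $\eta_J$-stable, hence isomorphic) works regardless, and in fact spells out a step the paper passes over more quickly.
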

 
 \begin{proof}
 Let $\sigma_J\colon \mathfrak{M}_{\theta_J}\to \vert \mathcal{L}_J\vert$ be the composition of the isomorphism $\psi$ of Lemma~\ref{lem:5varieties} with the closed immersion $\mathfrak{M}_{\theta_J}\hookrightarrow \vert L_J\vert$ from diagram \eqref{eqn:5varieties}. Since $\sigma_J$ is a closed immersion, it identifies $\mathfrak{M}_{\theta_J}$ with $\im(\sigma_J)$. Surjectivity of $\pi_J$ and commutativity of diagram \eqref{eqn:5varieties} then imply that $\mathfrak{M}_{\theta_J}$ is isomorphic to the subscheme $\im(\sigma_J\circ \pi_J) = \im(\varphi_{\vert \mathcal{L}_J\vert}\circ \tau_J)$ of $\vert \mathcal{L}_J\vert$.  Since $\mathcal{L}_J$ is the polarising very ample line bundle on the GIT quotient $\mathcal{M}(A_J)$, the closed immersion $\varphi_{\vert \mathcal{L}_J\vert}$ induces an isomorphism $\lambda_J\colon \im(\varphi_{\vert \mathcal{L}_J\vert})\to \mathcal{M}(A_J)$. The morphism
 \[
 \iota_J\coloneqq \lambda_J\circ \sigma_J\colon \mathfrak{M}_{\theta_J}\longrightarrow \mathcal{M}(A_J)
 \]
 is therefore a closed immersion. Note that 
 \[
 \iota_J\circ \pi_J = \lambda_J\circ \sigma_J\circ \pi_J = \lambda_J \circ \varphi_{\vert \mathcal{L}_J\vert}\circ \tau_J = \tau_J,
 \]
 so diagram \eqref{eqn:isomDiagram} commutes. In order to prove that  $\iota_J$ is an isomorphism of the underlying reduced schemes, it suffices to show that $\iota_J$ is surjective on closed points. 
 
 Consider a closed point $[N]\in \mathcal{M}(A_J)$, where $N$ is an $\eta_J$-stable $A_J$-module of dimension vector $v_J$. Let $M$ be the $\theta_J$-semistable $A$-module from Lemma~\ref{lem:existsemistable}.  For $i\not\in \{\infty\}\cup J$, define $m_i\coloneqq n\dim(\rho_i)-\dim_i M\geq 0$ and let $S_i\coloneqq\CC e_i$ denote the vertex simple $A$-module at vertex $i\in Q_0$. The $A$-module
 \[
 \overline{M}\coloneqq M\oplus \bigoplus_{i\in \{0,\dots,r\}\setminus J} S_i^{\oplus m_i}
 \]
 is $\theta_J$-semistable of dimension vector $v$ by construction, and it satisfies $j^*(\overline{M}) = j^*(M) = N$. Write $[\overline{M}]\in \mathfrak{M}_{\theta_J}$ for the corresponding closed point, and let $\widetilde{M}$ be any $\theta$-stable $A$-module of dimension vector $v$ such that the closed point $[\widetilde{M}]\in \mathfrak{M}_\theta$ satisfies $\pi_J([\widetilde{M}])=[\overline{M}]\in \mathfrak{M}_{\theta_J}$. Then $j^*(\widetilde{M})=j^*(\overline{M})=N$, hence $\tau_J([\widetilde{M}])= [N]$, and commutativity of diagram \eqref{eqn:isomDiagram} gives that
 \[
 \iota_J([\overline{M}]) = (\iota_J\circ \pi_J)\big([\widetilde{M}]\big) = \tau_J\big([\widetilde{M}]\big) = [N],
 \]
 so $\iota_J$ is indeed surjective. The final statement of Theorem~\ref{thm:coarsetofine} follows from Lemma~\ref{lem:Mthetageometry} and Lemma~\ref{lem:5varieties}.
 \end{proof}
 
 \begin{remarks}
 \label{rem:Jnonempty}

 \begin{enumerate}
  \item  If $J\neq \{0,\dots,r\}$, then the stability parameter $\theta_J$ lies in the boundary of the GIT chamber $C_+$, so $\mathfrak{M}_{\theta_J}$ does not admit a universal family of $\theta_J$-semistable $\Pi$-modules of dimension vector $v$. However, the fine moduli space $\mathcal{M}(A_J)$ does carry a universal family $T_J$ of  $\eta_J$-stable $A_J$-modules of dimension vector $v_J$, and hence under the isomorphism of Theorem~\ref{thm:coarsetofine}, the bundle $\iota_J^*(T_J)$ on $\mathfrak{M}_{\theta_J}$ pulls back along $\pi_J$ to the summand $\bigoplus_{i\in \{\infty\}\cup J} \mathcal{R}_i$ of the tautological bundle on $\mathfrak{M}_\theta$.
     \item In the course of the proof of Theorem~\ref{thm:coarsetofine}, we deduce directly that $\tau_J$ is surjective on closed points.
     \item For $J=\varnothing$, we have $\mathfrak{M}_{\theta_J} \cong \Sym^n(\CC^{2}/\Gamma)$. On the other hand, $e_\infty A e_\infty = \CC e_\infty$, which does not provide enough information with which to reconstruct $\Sym^n(\CC^{2}/\Gamma)$.
 \end{enumerate}
 \end{remarks}

\section{Identifying the posets for the coarse and fine moduli problems}
 We now establish that the morphisms $\iota_J\colon \mathfrak{M}_{\theta_J}\to \mathcal{M}(A_J)$ from Theorem~\ref{thm:coarsetofine} are compatible with the morphisms $\pi_{J,J'}\colon \mathfrak{M}_{\theta_J}\to \mathfrak{M}_{\theta_{J'}}$ that feature in the poset introduced in Proposition~\ref{prop:poset}.

   \begin{lem}
   \label{lem:functorial}
   For non-empty subsets $J^\prime\subset J\subset \{0,1,\dots,r\}$, there is a commutative diagram
   \begin{equation}
   \label{eqn:pitaudiagram}
\begin{tikzcd}
 \mathfrak{M}_{\theta_J} \ar[r,"\iota_J"] \ar[d,swap,"{\pi_{J,J'}}"]& \mathcal{M}(A_J)\ar[d,"{\tau_{J,J'}}"]   \\
  \mathfrak{M}_{\theta_{J'}}\ar[r,"\iota_{J'}"] & \mathcal{M}(A_{J'}) 
 \end{tikzcd}
\end{equation}
in which the horizontal arrows are isomorphisms on the underlying reduced schemes and the vertical arrows are surjective, projective, birational morphisms. 
   \end{lem}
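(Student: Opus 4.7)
The plan is to construct $\tau_{J,J'}$ by iterated cornering, then deduce both the commutativity of \eqref{eqn:pitaudiagram} and the properties of the vertical arrow on the right from what is already proved; essentially all the work is in the construction of $\tau_{J,J'}$ itself.

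First I would construct $\tau_{J,J'}$ by imitating the cornering argument of Lemma~\ref{lem:tauJ}, with $\Pi$ and $\Pi_J$ replaced throughout by $\Pi_J$ and $\Pi_{J'}$ respectively. Since $J'\subset J$, the idempotent $e_{J'}=e_\infty+\sum_{j\in J'}e_j$ lies in $\Pi_J=e_J\Pi e_J$, and a direct check shows $e_{J'}\Pi_J e_{J'}=\Pi_{J'}$ as subalgebras of $\Pi$. Multiplying the universal homomorphism $\Pi_J\to\End(T_J)$ on $\mathcal{M}(\Pi_J)$ by $e_{J'}$ on both sides turns the summand $\bigoplus_{i\in\{\infty\}\cup J'}T_i$ of $T_J$ into a flat family of $\Pi_{J'}$-modules of dimension vector $v_{J'}$. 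Stability descends cleanly: $\eta_J$-stability of the fibre $T_{J,y}$ is equivalent to the existence of a surjective $\Pi_J$-module homomorphism $\Pi_J e_\infty\twoheadrightarrow T_{J,y}$, and multiplying on the left by $e_{J'}$ yields a surjection $\Pi_{J'}e_\infty\twoheadrightarrow \bigoplus_{i\in\{\infty\}\cup J'}T_{i,y}$, which is precisely $\eta_{J'}$-stability. The universal property of $\mathcal{M}(\Pi_{J'})$ then produces a morphism $\tau_{J,J'}\colon\mathcal{M}(\Pi_J)\to\mathcal{M}(\Pi_{J'})$ under which the tautological summand at each vertex $i\in\{\infty\}\cup J'$ pulls back to the corresponding summand of $T_J$.

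Next, for commutativity: uniqueness in the cornering construction gives $\tau_{J'}=\tau_{J,J'}\circ\tau_J$ on $\mathfrak{M}_\theta$, since both morphisms classify the family $\bigoplus_{i\in\{\infty\}\cup J'}\mathcal{R}_i$ of $\eta_{J'}$-stable $\Pi_{J'}$-modules. Variation of GIT quotient simultaneously gives $\pi_{J'}=\pi_{J,J'}\circ\pi_J$. Combining these with the identities $\iota_J\circ\pi_J=\tau_J$ and $\iota_{J'}\circ\pi_{J'}=\tau_{J'}$ from Theorem~\ref{thm:coarsetofine} produces
\[
\iota_{J'}\circ\pi_{J,J'}\circ\pi_J=\iota_{J'}\circ\pi_{J'}=\tau_{J'}=\tau_{J,J'}\circ\tau_J=\tau_{J,J'}\circ\iota_J\circ\pi_J.
\]
Since $\pi_J$ is surjective (Lemma~\ref{lem:VGIT}) and both targets are reduced and separated, $\pi_J$ can be cancelled on the right to give $\iota_{J'}\circ\pi_{J,J'}=\tau_{J,J'}\circ\iota_J$ as morphisms of reduced schemes, which is the commutativity of~\eqref{eqn:pitaudiagram}.

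Finally, the left vertical morphism $\pi_{J,J'}$ is surjective, projective and birational by Proposition~\ref{prop:poset}, and the horizontal morphisms $\iota_J,\iota_{J'}$ are isomorphisms of the underlying reduced schemes by Theorem~\ref{thm:coarsetofine}; so the identity $\tau_{J,J'}=\iota_{J'}\circ\pi_{J,J'}\circ\iota_J^{-1}$ (on reduced schemes) transfers all three properties to $\tau_{J,J'}$. The main obstacle I anticipate is entirely in the first paragraph: one must check carefully that the $\Pi_{J'}$-structure on the restricted summand is well-defined and that $\eta_J$-stability really does descend to $\eta_{J'}$-stability after cornering. Both points reduce to the standard equivalence between King stability and the existence of a surjection from the projective at the framing vertex, which commutes with multiplication by $e_{J'}$; once this is in place, the remaining steps are formal.
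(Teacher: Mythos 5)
Your proof is correct and follows essentially the same route as the paper's: construct $\tau_{J,J'}$ from the restricted tautological family on $\mathcal{M}(\Pi_J)$ via the universal property, derive the two triangle identities $\tau_{J,J'}\circ\tau_J=\tau_{J'}$ and $\pi_{J,J'}\circ\pi_J=\pi_{J'}$, combine them with $\iota_J\circ\pi_J=\tau_J$ and $\iota_{J'}\circ\pi_{J'}=\tau_{J'}$ from Theorem~\ref{thm:coarsetofine}, and cancel the surjection $\pi_J$. The only superficial differences are that you spell out the cornering identity $e_{J'}\Pi_J e_{J'}=\Pi_{J'}$ and the descent of stability (which the paper merely asserts), and that you cancel $\pi_J$ by the abstract equalizer argument rather than the paper's pointwise check; both require exactly the same hypotheses (source reduced, target separated) and are interchangeable here.
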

    \begin{proof}
   The subbundle  $\bigoplus_{i\in \{\infty\}\cup J'} T_i$ of the tautological bundle $T_J$ on $\mathcal{M}(A_J)$ is a flat family of $\eta_{J'}$-stable $A_{J'}$-modules of dimension vector $v_{J'}$, so there is a universal morphism
   \[
   \tau_{J,J'}\colon \mathcal{M}(A_J)\longrightarrow \mathcal{M}(A_{J'})
 \]
 satisfying $\tau_{J,J'}^*(T_i') = T_i$ for $i\in \{\infty\}\cup J'$, where $\bigoplus_{i\in \{\infty\}\cup J'} T_i^\prime$ is the tautological bundle on $\mathcal{M}(A_{J'})$. Now 
 \[
 \big( \tau_{J,J'}\circ \tau_J\big)^*(T_i') = \tau_J^*(T_i) = \mathcal{R}_i = \tau_{J'}^*(T_i')
 \]
 for all $i\in \{\infty\}\cup J'$, and since this property characterises the morphism $\tau_{J'}$, we have a commutative diagram
      \begin{equation}
   \label{eqn:tautaudiagram}
\begin{tikzcd}
 & \mathfrak{M}_\theta \ar[dl,swap,"\tau_J"] \ar[dr,"\tau_J"] & \\
 \mathcal{M}(A_J) \ar[rr,"\tau_{J,J'}"] & & \mathcal{M}(A_{J'}).
 \end{tikzcd}
\end{equation}
Proposition~\ref{prop:poset} gives a similar commutative diagram expressing the identity $\pi_{J,J'}\circ \pi_J = \pi_{J'}$ for morphisms between quiver varieties, while Theorem~\ref{thm:coarsetofine} establishes the identities $\iota_J\circ \pi_J=\tau_J$ and $\iota_{J'}\circ \pi_{J'}=\tau_{J'}$. Taken together, these identities show that the maps in all four triangles in the following pyramid diagram commute:
     \begin{equation}
   \label{eqn:pyramidDiagram}
\begin{tikzcd}
 & \mathfrak{M}_\theta \ar[dl,swap,"\pi_J"]    \ar[dr,"\tau_J"] & \\
 \mathfrak{M}_{\theta_J} \ar[rr, "\iota_J"] \ar[dd,swap,"{\pi_{J,J'}}"]& & \mathcal{M}(A_J)\ar[dd,"{\tau_{J,J'}}"]   \\
  & & \\
  \mathfrak{M}_{\theta_{J'}}\ar[rr,"\iota_{J'}"]\arrow[from=uuur,"\pi_{J'}",crossing over] & & \mathcal{M}(A_{J'}).\arrow[from=uuul,swap,crossing over,"\tau_{J'}"]
 \end{tikzcd}
\end{equation}
To show that the morphisms around the pyramid's square base commute, choose for any closed point $x\in \mathfrak{M}_{\theta_J}$ a lift $y\in \pi_J^{-1}(x)\subset \mathfrak{M}_\theta$. Commutativity of the triangles in the diagram gives
\[
\big(\iota_{J'}\circ \pi_{J,J'}\big)(x) = \iota_{J'}(\pi_{J'}(y)\big)  = \tau_{J'}(y) = \tau_{J,J'}(\tau_J(y)\big) = \big(\tau_{J,J'}\circ \iota_J\big)(x),
  \]
and since $x\in \mathfrak{M}_\theta$ was arbitrary and $\pi_J$ is surjective, we have that $\iota_{J'}\circ \pi_{J,J'} = \tau_{J,J'}\circ \iota_J$ as required.
   \end{proof}

We deduce the following.

  \begin{prop}
  \label{prop:poset2}
  The face poset of the cone $\overline{C_+}$ can be identified with the poset on the set of fine moduli spaces $\mathcal{M}(A_J)$ for non-empty subsets $J\subseteq \{0,\dots,r\}$ together with $\CC^{2n}/\Gamma_n$, where edges in the Hasse diagram of the poset indicating inequalities $\mathfrak{M}(A_J) > \mathfrak{M}(A_{J'})$ and $\mathfrak{M}(A_J) > \CC^{2n}/\Gamma_n$ are realised by the universal morphisms $\tau_{J,J'}$ and the structure morphisms $\varphi_{\vert \mathcal{O}_{\mathcal{M}(A_J)}\vert}$ respectively.
  \end{prop}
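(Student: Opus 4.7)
The plan is to deduce the statement directly from Proposition~\ref{prop:poset} by substituting, for every nonempty $J\subseteq\{0,\dots,r\}$, the isomorphism $\iota_J\colon \mathfrak{M}_{\theta_J}\to \mathcal{M}(\Pi_J)$ from Theorem~\ref{thm:coarsetofine}, and by identifying the minimum element $\mathfrak{M}_{\theta_\varnothing}\cong \Sym^n(\CC^2/\Gamma)\cong \CC^{2n}/\Gamma_n$ via Theorem~\ref{thm:VVW}. Since each $\iota_J$ is an isomorphism of the underlying reduced schemes, this gives a bijection between the elements of the two posets; what remains is to check that the Hasse edges are realised by the claimed morphisms.

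First I would handle edges between two nonempty subsets $J'\subsetneq J$. Lemma~\ref{lem:functorial} provides exactly the required commutative square with horizontal arrows $\iota_J$, $\iota_{J'}$ and vertical arrows $\pi_{J,J'}$, $\tau_{J,J'}$. Thus, under the identifications $\iota_J$ and $\iota_{J'}$, the morphism $\pi_{J,J'}$ from Proposition~\ref{prop:poset} corresponds to $\tau_{J,J'}$, as required.

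Next I would handle edges descending from $\mathcal{M}(\Pi_J)$ with $J\neq\varnothing$ to $\CC^{2n}/\Gamma_n$, i.e.\ those not covered by Lemma~\ref{lem:functorial}. By Lemma~\ref{lem:5varieties}, both $\mathfrak{M}_{\theta_J}$ and $\mathcal{M}(\Pi_J)$ are schemes over $\Sym^n(\CC^2/\Gamma)$, and the isomorphism $\iota_J=\lambda_J\circ \sigma_J$ constructed in the proof of Theorem~\ref{thm:coarsetofine} is manifestly a morphism over $\Sym^n(\CC^2/\Gamma)$. The Leray argument in the proof of Lemma~\ref{lem:5varieties} identifies the structure morphism of $\mathcal{M}(\Pi_J)$ with $\varphi_{\vert \mathcal{O}_{\mathcal{M}(\Pi_J)}\vert}$. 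On the other hand, the morphism $\pi_{J,\varnothing}\colon \mathfrak{M}_{\theta_J}\to \CC^{2n}/\Gamma_n$ factors the resolution $n\Gamma\text{-}\Hilb(\CC^2)\to \Sym^n(\CC^2/\Gamma)$ from Theorem~\ref{thm:VVW}, so it is precisely the structure morphism of $\mathfrak{M}_{\theta_J}$ over $\Sym^n(\CC^2/\Gamma)$. Composing with $\iota_J^{-1}$, these two structure morphisms agree, so $\pi_{J,\varnothing}$ translates across $\iota_J$ to $\varphi_{\vert \mathcal{O}_{\mathcal{M}(\Pi_J)}\vert}$, as required.

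The main obstacle is essentially bookkeeping around the minimum of the poset: Lemma~\ref{lem:functorial} is stated only for nonempty $J'$, and the fine moduli space $\mathcal{M}(\Pi_\varnothing)$ is not isomorphic to $\mathfrak{M}_{\theta_\varnothing}$ (see Remarks~\ref{rem:Jnonempty}(3)), so the bottom element of the poset must be taken to be $\CC^{2n}/\Gamma_n$ rather than $\mathcal{M}(\Pi_\varnothing)$, and the compatibility of $\iota_J$ with the common base $\Sym^n(\CC^2/\Gamma)$ must be invoked explicitly. Once this compatibility is verified, the result is a direct translation of Proposition~\ref{prop:poset}.
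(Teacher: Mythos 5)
Your proposal is correct and amounts to filling in the argument the paper leaves implicit (it simply says ``We deduce the following'' after Lemma~\ref{lem:functorial}): one identifies each $\mathfrak{M}_{\theta_J}$ with $\mathcal{M}(\Pi_J)$ via $\iota_J$ for nonempty $J$, uses Lemma~\ref{lem:functorial} to match the Hasse edges $\pi_{J,J'}$ with $\tau_{J,J'}$, and handles the bottom element $\mathfrak{M}_{\theta_\varnothing}\cong\CC^{2n}/\Gamma_n$ separately since $\mathcal{M}(\Pi_\varnothing)\not\cong\mathfrak{M}_{\theta_\varnothing}$. Your observation that $\iota_J$ is a morphism over $\Sym^n(\CC^2/\Gamma)$ and that both $\pi_{J,\varnothing}$ and $\varphi_{\vert\mathcal{O}_{\mathcal{M}(\Pi_J)}\vert}$ are the respective structure morphisms correctly closes the remaining gap.
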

 
 \section{Punctual Hilbert schemes for Kleinian singularities}\label{sec:HilbSchemesOfSings}
 In this section, we specialise to the case $J=\{0\}$ and study the algebra $A_J$, before establishing the link between the fine moduli space $\mathcal{M}(A_J)$ and the Hilbert scheme of $n$ points on $\CC^2/\Gamma$. It will be convenient to write dimension vectors of $A_J$-modules as pairs $(v_\infty,v_0)$ in this case.
 
As we saw in Proposition~\ref{prop:quiver}, the algebra $A_J$ can be presented as the path algebra of a quiver modulo an ideal of relations. The relations appear to be fairly complicated, but for $J=\{0\}$ it is possible to give an explicit presentation of $A_J$;
 this will turn out to be sufficient for our purposes.
 To spell this out, recall the construction of the quiver $Q_J^*$ from Proposition~\ref{prop:quiver} that has vertex set
 $\{\infty, 0\}$ and arrow set 
comprising one arrow 
$b$ from $\infty$ to $0$, and loops $\alpha_1, \alpha_2, \alpha_3$ at vertex $0$ corresponding to a set of minimal $\CC$-algebra generators of $\Hom_{R_0}(R_0,R_0)= R_0 \cong \CC[x,y]^\Gamma$ as shown in Figure~\ref{fig:quiverQ'}:
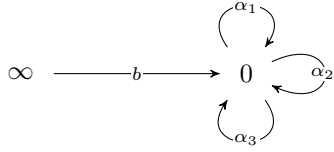
\begin{figure}[h!]
\centering
\begin{tikzpicture}[yscale=0.77]
\node (M0) at (-1,0) {$\infty$};
\node (M1) at (2,0) {$0$};
\draw [->] (M0) to node[gap] {$\scriptstyle{b}$} (M1);
\draw [->,looseness=12, out=120, in=60] (M1) to node[gap] {$\scriptstyle{\alpha_1}$} (M1);
\draw [->,looseness=10, out=30, in=-30] (M1) to node[gap] {$\scriptstyle{\alpha_2}$} (M1);
\draw [->,looseness=12, out=-60, in=-120] (M1) to node[gap] {$\scriptstyle{\alpha_3}$} (M1);
\end{tikzpicture}
\caption{The quiver $Q_J^*$ used in the presentation of $A_J$ for $J=\{0\}$.}
\label{fig:quiverQ'}
\end{figure}

 To state the presentation of $A_J$ in this case, recall the presentation of the algebra $\CC[x,y]^\Gamma$ from \eqref{eqn:duValEquation}.

  \begin{lem}\label{lem:presentationPi_J}
  For $J = \{0\}$, 
 the algebra $A_J$ is isomorphic to the quotient of $\CC Q_J^*$ by the two-sided ideal
 \begin{equation}
  \label{eqn:K2}
 K= \big(f(\alpha_1,\alpha_2, \alpha_3), \alpha_1\alpha_2-\alpha_2\alpha_1, \alpha_1\alpha_3-\alpha_3\alpha_1, \alpha_2\alpha_3-\alpha_3\alpha_2\big),
  \end{equation}
where $f\in\CC[z_1,z_2,z_3]$ is the defining equation of the hypersurface $\CC^2/\Gamma\subseteq \Spec \CC[z_1,z_2,z_3]$. 
 \end{lem}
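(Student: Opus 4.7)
The plan is to exhibit the isomorphism explicitly by first analysing the larger quotient $\Pi/(b^*)$. Killing $b^*$ has two effects on the preprojective relations \eqref{eqn:Pirelations}: the relation at~$\infty$, which read $b^*b = 0$, becomes trivial, while the relation at~$0$ loses its $\epsilon(b)bb^*$ summand and reduces to the corresponding relation at vertex~$0$ in the \emph{unframed} preprojective algebra $\Pi_\Gamma$ of the McKay quiver of $\Gamma$. Moreover, because $b^*$ was the only arrow into $\infty$, no path of length at least one ending at $\infty$ survives in $\Pi/(b^*)$. Cornering by $e_J = e_\infty + e_0$ (which is exact in both variables since multiplication by an idempotent splits) then yields a $\CC$-linear decomposition
\begin{align*}
\Pi_J/(b^*) \;=\; \CC\, e_\infty \;\oplus\; e_0\Pi_\Gamma e_0 \;\oplus\; (e_0\Pi_\Gamma e_0)\cdot b,
\end{align*}
with $e_\infty(\Pi_J/(b^*))e_0 = 0$, and with multiplication entirely determined by the algebra structure on $e_0\Pi_\Gamma e_0$ together with $e_\infty\cdot b = 0$ and $b\cdot e_\infty = b$.

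The key classical input is the isomorphism $e_0\Pi_\Gamma e_0 \cong \CC[x,y]^\Gamma$ coming from the McKay correspondence, which follows uniformly from the Morita equivalence between $\Pi_\Gamma$ and the skew group algebra $\CC[x,y]\rtimes\Gamma$ under which $e_0$ corresponds to the averaging idempotent $\tfrac{1}{|\Gamma|}\sum_{g\in\Gamma}g$. Under this isomorphism the three minimal generators $z_1,z_2,z_3$ of $\CC[x,y]^\Gamma$ pull back to pairwise commuting elements $\tilde z_1,\tilde z_2,\tilde z_3 \in e_0\Pi_\Gamma e_0$ satisfying $f(\tilde z_1,\tilde z_2,\tilde z_3) = 0$.

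I would then define a $\CC$-algebra homomorphism $\phi\colon \CC Q'/K \to \Pi_J/(b^*)$ by sending the idempotents to themselves, $\alpha \mapsto b$ and $\alpha_i \mapsto \tilde z_i$. The relations \eqref{eqn:K2} hold in the target by the preceding paragraph, so $\phi$ is well defined. Surjectivity is immediate from the displayed decomposition above. For injectivity one computes the analogous corner decomposition of $\CC Q'/K$ directly from the presentation: the relations in $K$ force $e_0(\CC Q'/K)e_0 \cong \CC[\alpha_1,\alpha_2,\alpha_3]/(f)$, the corner $e_0(\CC Q'/K)e_\infty$ is free of rank one over this ring generated by $\alpha$, and $e_\infty(\CC Q'/K)e_0 = 0$ since $Q'$ has no arrow into $\infty$. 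Matching corner by corner identifies $\phi$ as an isomorphism.

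The main obstacle is making the identification $e_0\Pi_\Gamma e_0 \cong \CC[x,y]^\Gamma$ explicit enough to single out the loop-words at vertex $0$ representing the $\tilde z_i$. This is essentially bookkeeping within the McKay correspondence, but it is the step where type-by-type case analysis would threaten to appear were one not to invoke the uniform Morita equivalence with $\CC[x,y]\rtimes\Gamma$.
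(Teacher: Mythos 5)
Your proof is correct and follows essentially the same strategy as the paper: both rely on the isomorphism $e_0\Pi_\Gamma e_0\cong\CC[x,y]^\Gamma$ of Crawley-Boevey and Holland, send $\alpha\mapsto b$ and the $\alpha_i$ to lifts of the three minimal generators of the invariant ring, and verify the resulting map $\CC Q'/K\to\Pi_J/(b^*)$ is an isomorphism by matching the corner pieces $e_i(\,\cdot\,)e_j$ (in the paper phrased as a generation argument plus injectivity on $\im(t)$ and on $(e_\infty,\alpha)$). The concern you raise at the end about singling out explicit loop-words representing the $\widetilde z_i$ is a non-issue: as in the paper, one simply chooses arbitrary preimages in $e_0\Pi e_0$ of the $\beta_i\in e_0\Pi_\Gamma e_0$ under the natural surjection, so no type-by-type bookkeeping is needed once the Crawley-Boevey--Holland isomorphism is invoked.
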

 
 \begin{proof}
 In Proposition~\ref{prop:quiver}, we constructed a $\CC$-algebra epimorphism $\beta_J\colon \CC Q_J^*\to A_J$. The images under $\beta_J$ of the arrows $\alpha_1, \alpha_2, \alpha_3$ correspond to minimal $\CC$-algebra generators of $R_0\cong e_0 A e_0$, so the ideal $K$ lies in the kernel of $\beta_J$.  We claim that the induced $\CC$-algebra epimorphism
 \[
\beta_J\colon \CC Q_J^*/K \longrightarrow A_J
\]
is an isomorphism. Define a $\CC$-algebra homomorphism $\gamma_J\colon A_J\to \CC Q_J^*/K$ by sending the chosen minimal $\CC$-algebra generators of $R_0\cong e_0 A_J e_0$ to the classes of the arrows $\alpha_1, \alpha_2, \alpha_3$ in $\CC Q_J^*/K$, and by sending the classes of $e_\infty$ and $b$ in $A_J$ to the classes of the paths $e_\infty$ and $b$ in $\CC Q_J^*/K$. This defines a $\CC$-algebra homomorphism,  because $e_0 A_J e_0$ is a subalgebra of $A$ with quotient $A/(e_\infty)$. Clearly $\gamma_J = \beta_J^{-1}$ as required.
 \end{proof}

 \begin{prop}
 \label{prop:HilbIsom}
 For the subset $J=\{0\}$, there is an isomorphism of schemes
 \[ \omega_n\colon \Hilb^{[n]}(\mathbb{C}^2/\Gamma) \to \mathcal{M}(A_J)\]
 over $\Sym^n(\CC^2/\Gamma)$.
 \end{prop}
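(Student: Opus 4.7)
The plan is to identify $\Hilb^{[n]}(\CC^2/\Gamma)$ with a fine moduli space of $\Pi_J/(b^*)$-modules, and then define $\omega_n$ as the composition of this identification with the closed immersion $\mathcal{M}(\Pi_J/(b^*))\hookrightarrow \mathcal{M}(\Pi_J)$ induced by the algebra surjection $\Pi_J\twoheadrightarrow\Pi_J/(b^*)$. The essential input is Lemma~\ref{lem:presentationPi_J}: under the isomorphism $\Pi_J/(b^*)\cong \CC Q'/K$, an $\eta_J$-stable $\Pi_J/(b^*)$-module of dimension vector $(1,n)$ unpacks as an $n$-dimensional module $V_0$ over the commutative algebra $\CC[\alpha_1,\alpha_2,\alpha_3]/(f)\cong\CC[x,y]^\Gamma$, together with a distinguished element $\alpha(1)\in V_0$ generating $V_0$ cyclically. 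Such data is exactly the data of a codimension-$n$ ideal $I\subset\CC[x,y]^\Gamma$, via $V_0=\CC[x,y]^\Gamma/I$ and $\alpha(1)=1+I$.

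To realise $\omega_n$ as a morphism of schemes, I apply this pointwise dictionary in families to the universal family of codimension-$n$ ideals $\mathcal{I}$ on $\Hilb^{[n]}(\CC^2/\Gamma)$; this produces a flat family of $\eta_J$-stable $\Pi_J/(b^*)$-modules of dimension vector $v_J$, and hence by restriction of scalars a flat family of $\eta_J$-stable $\Pi_J$-modules. The universal property of $\mathcal{M}(\Pi_J)$ then yields the morphism $\omega_n$ over $\Sym^n(\CC^2/\Gamma)$; compatibility of the two structure morphisms follows as in the last part of the proof of Lemma~\ref{lem:5varieties}. By construction, $\omega_n$ factors through the closed subscheme $Z\subseteq \mathcal{M}(\Pi_J)$ cut out by the vanishing of the universal map $T_0\to T_\infty$ corresponding to $b^*$, and the induced morphism $\Hilb^{[n]}(\CC^2/\Gamma)\hookrightarrow Z$ is a closed immersion.

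To conclude that $\omega_n$ is an isomorphism of underlying reduced subschemes, it suffices to show that $\omega_n^{\rm red}$ is surjective. Now $\mathcal{M}(\Pi_J)_{\rm red}$ is irreducible of dimension $2n$ by Lemma~\ref{lem:Mthetageometry} combined with Theorem~\ref{thm:coarsetofine}, whereas $\Hilb^{[n]}(\CC^2/\Gamma)_{\rm red}$ is irreducible by Zheng~\cite{Zheng17} and has dimension $2n$ since it contains the configuration space of $n$ distinct points in the smooth locus of $\CC^2/\Gamma$ as a dense open subscheme. The image of the closed immersion $\omega_n^{\rm red}$ is therefore an irreducible closed subscheme of dimension $2n$ inside the irreducible scheme $\mathcal{M}(\Pi_J)_{\rm red}$, hence equal to it; since a surjective closed immersion into a reduced scheme is an isomorphism, $\omega_n^{\rm red}$ is an isomorphism.

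The main obstacle will be making the pointwise dictionary of the first paragraph precise at the level of functors in flat families. The essential mechanism is the preprojective relation $b^*b=0$ at vertex $\infty$: combined with $b^*=0$, it forces the action of $e_0\Pi_J e_0$ on $V_0$ to descend through the quotient $e_0\Pi_J e_0\to e_0\Pi_\Gamma e_0\cong\CC[x,y]^\Gamma$, so that $\eta_J$-stability becomes exactly the condition that $V_0$ is cyclic as a $\CC[x,y]^\Gamma$-module, generated by $b(1)$.
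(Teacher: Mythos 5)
Your route is genuinely different from the paper's, and your first two paragraphs are sound in outline: identifying $\Hilb^{[n]}(\CC^2/\Gamma)$ with the fine moduli space $\mathcal{M}(\Pi_J/(b^*))$ via Lemma~\ref{lem:presentationPi_J}, observing that the latter is the closed subscheme $Z\subseteq\mathcal{M}(\Pi_J)$ cut out by vanishing of the universal map corresponding to $b^*$, and checking compatibility of the structure morphisms to $\Sym^n(\CC^2/\Gamma)$. The harder direction of the family-level dictionary that you flag as the ``main obstacle'' --- producing a flat ideal sheaf from a flat family of cyclic $\CC[x,y]^\Gamma$-modules --- is exactly where the paper instead argues directly with $\Spec A$-valued points for reduced rings $A$, so up to this point your plan and the paper's are not so far apart.

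The genuine problem is in your final paragraph. You deduce surjectivity of $\omega_n^{\rm red}$ from the equality $\dim\Hilb^{[n]}(\CC^2/\Gamma)_{\rm red}=2n$, and to get that equality you invoke Zheng's irreducibility theorem (you need irreducibility to know the configuration-space locus is \emph{dense}). But the paper's explicit goal is to reprove Zheng's irreducibility as a \emph{consequence} of this proposition; that is the content of Theorem~\ref{thm:mainintro}. Taking Zheng's result as an input here makes the deduction circular. The paper sidesteps this by establishing surjectivity a different way: it shows that for \emph{every} closed point $[N]\in\mathcal{M}(\Pi_J)$ the map $w^*$ representing $b^*$ already vanishes. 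The mechanism is to lift $N$ along the surjection $\tau_J\colon\mathfrak{M}_\theta\to\mathcal{M}(\Pi_J)$ of Theorem~\ref{thm:coarsetofine} to a $\theta$-stable $\Pi$-module; under the identification $\mathfrak{M}_\theta\cong n\Gamma\text{-}\Hilb(\CC^2)$ this corresponds to a $\Gamma$-invariant ideal $I\lhd\CC[x,y]$, and the induced map from the $\Gamma$-invariant part of $\CC[x,y]/I$ back to the framing vertex is zero. That argument gives $Z_{\rm red}=\mathcal{M}(\Pi_J)_{\rm red}$ with no prior knowledge of $\Hilb^{[n]}(\CC^2/\Gamma)$, and it is what you would need in place of your dimension count.
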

\begin{proof} 
We begin by constructing the morphism of schemes $\omega_n$. Let $\mathcal{T}$ denote the tautological rank $n$ bundle on $\Hilb^{[n]}(\CC^2/\Gamma)$, and write $\mathcal{O}$ for the trivial bundle. In light of the universal property of $\mathcal{M}(A_J)$, it suffices to show that $\mathcal{O}\oplus \mathcal{T}$ carries a natural structure of a flat family of $\eta_J$-stable $A_J$-modules of dimension vector $v_J = (1,n)$ on $\Hilb^{[n]}(\CC^2/\Gamma)$. A closed point of $\Hilb^{[n]}(\mathbb{C}^2/\Gamma)$ corresponds to a codimension $n$ ideal $I\lhd \CC[x,y]^\Gamma \cong \CC[z_1,z_2,z_3]/(f)$. The quotient vector space $\CC[x,y]^\Gamma/I$ is of dimension~$n$, it carries the action of commuting arrows $\alpha_1, \alpha_2, \alpha_3$ satisfying the relation $f$, and has a distinguished generator $[1]\in\CC[x,y]^\Gamma/I$, which can be thought of as the image of a map $w$ from a one-dimensional vector space. 
Lemma~\ref{lem:presentationPi_J} now shows that we get the data of an $A_J$-module of dimension vector $(1,n)$. This module is moreover cyclic with generator at vertex $\infty$, so it is $\eta_J$-stable as required.  This construction works relatively over the whole of $\Hilb^{[n]}(\CC^2/\Gamma)$, equipping $\mathcal{O}\oplus \mathcal{T}$ with the structure of a family of $\eta_J$-stable $A_J$-modules as claimed. Moreover, since the bundle $\mathcal{O}\oplus \mathcal{T}$ inducing $\omega_n$ has $\mathcal{O}$ as a summand, and since the trivial bundle on any scheme induces the structure morphism, we see that $\omega_n$ commutes with the structure morphisms to $\Sym^n(\CC^2/\Gamma)$.  

Reading  Lemma~\ref{lem:presentationPi_J} in the opposite direction, an $\eta_J$-stable $A_J$-module of dimension vector $(1,n)$ defines a cyclic $\CC[x,y]^\Gamma$-module of dimension $n$ over $\CC$. The universal property of $\Hilb^{[n]}(\CC^2/\Gamma)$ then ensures that the flat family $T_\infty\oplus T_0$ of $\eta_J$-stable $A_J$-modules of dimension vector $(1,n)$ over $\mathcal{M}(A_J)$ determines a morphism $ \mathcal{M}(A_J)\to \Hilb^{[n]}(\CC^2/\Gamma)$, which is by construction the inverse of the morphism $\omega_n$.
\end{proof}

 We deduce Theorem~\ref{thm:mainintro} announced in the Introduction.

 \begin{cor}
 For any $n\geq 1$, the reduced scheme underlying $\Hilb^{[n]}(\mathbb{C}^2/\Gamma)$ is isomorphic to the quiver variety $\mathfrak{M}_{\theta_0}$ for the parameter $\theta_0 = (-n,1,0,\dots,0)$ (compare Figure~\ref{fig:A2n=3}). In particular, $\Hilb^{[n]}(\mathbb{C}^2/\Gamma)_{\rm red}$ is a normal, irreducible scheme over $\CC^{2n}/\Gamma_n$ with symplectic singularities that admits a unique projective symplectic resolution, namely the morphism  
 \[
 n\Gamma\text{-}\Hilb(\CC^2)\to \Hilb^{[n]}(\mathbb{C}^2/\Gamma)_{\rm red}
 \]
 that sends an ideal $I$ in $\CC[x,y]$ to the ideal $I\cap \CC[x,y]^\Gamma$.
  \end{cor}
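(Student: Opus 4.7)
The plan is to combine the isomorphisms already established in the paper. Taking $J=\{0\}$, Theorem~\ref{thm:coarsetofine} provides an isomorphism $\iota_J\colon \mathfrak{M}_{\theta_0}\to \mathcal{M}(\Pi_J)_{\rm red}$, while Proposition~\ref{prop:HilbIsom} yields $\omega_n\colon \Hilb^{[n]}(\CC^2/\Gamma)_{\rm red}\to \mathcal{M}(\Pi_J)_{\rm red}$, both over $\Sym^n(\CC^2/\Gamma)$. The composition $\iota_J^{-1}\circ \omega_n$ delivers the required isomorphism $\Hilb^{[n]}(\CC^2/\Gamma)_{\rm red}\cong \mathfrak{M}_{\theta_0}$ over $\CC^{2n}/\Gamma_n\cong \Sym^n(\CC^2/\Gamma)$. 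Normality, irreducibility and the presence of symplectic singularities then transfer directly from $\mathfrak{M}_{\theta_0}$ via Lemma~\ref{lem:Mthetageometry}.

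For the existence of a projective symplectic resolution, I would fix any $\theta\in C_+$ and use Theorem~\ref{thm:VVW} to identify $\mathfrak{M}_\theta\cong n\Gamma\text{-}\Hilb(\CC^2)$, a smooth symplectic variety. Lemma~\ref{lem:VGIT} gives that the variation of GIT morphism $\pi_{\{0\}}\colon \mathfrak{M}_\theta\to \mathfrak{M}_{\theta_0}$ is surjective, projective and birational, hence a projective symplectic resolution of $\mathfrak{M}_{\theta_0}$. To identify this morphism with the set-theoretic map $I\mapsto I\cap \CC[x,y]^\Gamma$ of the introduction, I would trace closed points through the identifications: a $\theta$-stable $\Pi$-module corresponds to a $\Gamma$-invariant ideal $I\lhd\CC[x,y]$, and applying $j^*=e_J\Pi\otimes_\Pi(-)$ retains only the vector spaces at vertices $\infty$ and $0$, discarding the nontrivial isotypic components. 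Under the explicit construction in the proof of Proposition~\ref{prop:HilbIsom}, the resulting $\Pi_J$-module is precisely the datum encoding the ideal $I\cap \CC[x,y]^\Gamma$ in the invariant ring.

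For the uniqueness, I would argue that any projective symplectic resolution $Y\to \mathfrak{M}_{\theta_0}$ composes with the structure morphism $\mathfrak{M}_{\theta_0}\to \Sym^n(\CC^2/\Gamma)$ to produce a projective symplectic resolution of the latter. By \cite[Theorem~1.2]{BC18}, every such resolution is of the form $\mathfrak{M}_C\to \mathfrak{M}_0$ for a chamber $C$ in the movable cone $F$, so $Y\cong \mathfrak{M}_C$, and the factorisation through $\mathfrak{M}_{\theta_0}$ forces $\theta_0\in \overline{C}$ by variation of GIT quotient. The main obstacle is thus to show that $C_+$ is the \emph{unique} top-dimensional chamber of $F$ whose closure contains the ray $\langle\theta_0\rangle$; I expect this to follow from the explicit wall-and-chamber description of \cite[Theorem~4.6]{BC18} together with the fact (visible in Figure~\ref{fig:A2n=3}) that $\langle\theta_0\rangle$ is an extremal ray of $\overline{C_+}$ lying on the boundary of $F$, so that the walls incident to $\langle\theta_0\rangle$ are precisely those bounding $C_+$.
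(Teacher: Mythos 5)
Your proposal follows essentially the same route as the paper. The first part (composing $\iota_J^{-1}\circ\omega_n$ for $J=\{0\}$ and transferring irreducibility, normality and symplectic singularities from Lemma~\ref{lem:Mthetageometry}) matches the paper exactly, and your identification of the resolution with $I\mapsto I\cap\CC[x,y]^\Gamma$ by tracing through the construction in Proposition~\ref{prop:HilbIsom} and the evident isomorphism $(\CC[x,y]/I)^\Gamma\cong \CC[x,y]^\Gamma/(\CC[x,y]^\Gamma\cap I)$ is precisely what the paper does.

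For uniqueness you spell out the logic a bit more than the paper does: you argue that any projective symplectic resolution $Y\to\mathfrak{M}_{\theta_0}$ gives, after composing with the birational projective morphism $\mathfrak{M}_{\theta_0}\to\Sym^n(\CC^2/\Gamma)$, a projective symplectic resolution of $\Sym^n(\CC^2/\Gamma)$, which by \cite[Theorem~1.2]{BC18} must be $\mathfrak{M}_C$ for some chamber $C\subset F$, and the factorisation through $\mathfrak{M}_{\theta_0}$ pins down $\theta_0\in\overline{C}$. The paper compresses this to a one-line appeal to the isomorphism $L_F$ from \emph{ibid.}\ together with the key fact that the extremal ray $\rho_1^\perp\cap\dots\cap\rho_r^\perp$ of $F$ containing $\theta_0$ lies in the closure of exactly one chamber, namely $C_+$. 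You flag this last point as the ``main obstacle'' and conjecture it follows from the explicit wall description of \cite[Theorem~4.6]{BC18}; that is correct, and it is exactly the fact the paper invokes, so there is no genuine gap, only a slightly less crisp citation of the structural result from BC18.
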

 \begin{proof}
 The first statement follows from Theorem~\ref{thm:coarsetofine} and Proposition~\ref{prop:HilbIsom}, while the geometric properties of $\Hilb^{[n]}(\mathbb{C}^2/\Gamma)_{\rm red}$ are all inherited from its manifestation as $\mathfrak{M}_{\theta_0}$ via Lemma~\ref{lem:Mthetageometry}.
 
 Next we prove the statement about the resolution. In the notation of \cite[Theorem~1.2]{BC18}, the extremal ray $\rho_1^\perp\cap \dots \cap \rho_r^\perp$ of the cone $F$ that contains $\theta_0 = (-n,1,0,\dots,0)$ lies in the closure of precisely one chamber, namely the chamber $C_+$. Under the isomorphism $L_F$ from \emph{ibid.}, it follows that there is exactly one projective symplectic resolution of $\mathfrak{M}_{\theta_0}$, namely the fine moduli space $\mathfrak{M}_\theta$ for $\theta\in C_+$. By Theorem~\ref{thm:VVW}, this resolution is indeed $\mathfrak{M}_\theta\cong n\Gamma\text{-}\Hilb(\CC^2)$.
 
To see the last statement of the Corollary, consider the morphism $\tau_J\colon \mathfrak{M}_\theta \to\mathcal{M}(A_J)$ constructed in Lemma~\ref{lem:tauJ}. 
This is obtained by restricting a representation of the framed preprojective algebra $\Pi$ to the vertices $0,\infty$, noting that the map of vector bundles $\mathcal{R}_0\to \mathcal{R}_\infty$ is the zero map, and thus we indeed get a representation of~$A_J$. On the other hand, as we discussed before, the isomorphism $\mathfrak{M}_\theta \cong n\Gamma\text{-}\Hilb(\CC^2)$ identifies a $\Pi$-module with the quotient  $\CC[x,y]/I$ for a $\Gamma$-invariant ideal $I$ of $\CC[x,y]$. In this language, the restriction to the $0$ vertex is the $\Gamma$-invariant part of the 
quotient $\CC[x,y]/I$. The statement follows.
    \end{proof}
  
 \begin{remark}
 \begin{enumerate}
     \item Irreducibility of $\Hilb^{[n]}(\mathbb{C}^2/\Gamma)$ was first established by Zheng~\cite{Zheng17} through the study of maximal Cohen--Macaulay modules on Kleinian singularities using a case-by-case analysis following the ADE classification.
     \item Uniqueness of the symplectic resolution of $\Hilb^{[n]}(\mathbb{C}^2/\Gamma)$ was previously known in the special case $n=2$ by the work of Yamagishi~\cite[Proposition~2.10]{Yamagishi17}.
     \item Our approach does not shed light on whether  $\Hilb^{[n]}(\mathbb{C}^2/\Gamma)$ is reduced in its natural scheme structure, coming from its moduli space interpretation. 
     \end{enumerate}
 \end{remark}

\begin{remark}
\label{rem:n=1} For $n=1$, the statement of Theorem~\ref{thm:mainintro} is well known because $\Hilb^{[1]}(\CC^2/\Gamma)\cong \CC^2/\Gamma$, while the statement of Theorem~\ref{thm:coarsetofine} is a framed version of \cite[Theorem~1.2]{CIK18} for $\Gamma\subset \SL(2,\CC)$. Nevertheless, the approach of the current paper is valid for $n=1$ and shows in particular that $\mathfrak{M}_{\theta_J}\cong \CC^2/\Gamma$ for $J=\{0\}$. In fact, this result follows from \cite[Proposition~7.11]{BC18}. Indeed, \emph{ibid}.\ constructs a surjective linear map $L_{C_+}\colon \Theta_v\to N^1(S/(\CC^2/\Gamma))$ with kernel equal to the subspace spanned by $(-1,1,0,\dots,0)$, such that $L_{C_+}(C_+)$ is the ample cone of $S$ over $\CC^2/\Gamma$. Since $\theta_J=(-1,1,0,\dots,0)$ for $J=\{0\}$ and $n=1$, it follows that $\mathfrak{M}_{\theta_J}\cong \CC^2/\Gamma$ in that case. In addition, this explicit description of the kernel of $L_{C_+}$ for $n=1$ shows that the morphisms $\pi_{J,J'}$ and $\tau_{J,J'}$ from Propositions~\ref{prop:poset} and \ref{prop:poset2} are isomorphisms if and only if $J'\setminus J = \{0\}$.
\end{remark}

\appendix

\section{Bounding the dimension vectors of \texorpdfstring{\(\theta_J\)}{θJ}-stable modules}

\label{sec:the ugly proof}

\subsection{The key statement}
 We use the term `diagram' to mean `framed extended Dynkin diagram', and use the notation $ A_i, D_i, E_i $ for the framed extended versions of these Dynkin diagrams. An $A$-module $M$ of the 
 appropriate type naturally determines a representation $V$ of the corresponding quiver $Q^*$ that satisfies the relations from equation \eqref{eqn:Arelations}; we will call these simply 'quiver representations' below. The notion of $\theta_J$-stability for $M$ defines a
 notion of $\theta_J$-stability for $V$. 
		
 For $i\in Q_0^*=\{\infty, 0,1,\dots, r\}$ we write $ v_i \coloneqq\dim_i V $, and for $0\leq i\leq r$ we write $\delta_i\coloneqq \dim(\rho_i)$, so that the regular representation $\delta = \sum_{0\leq i\leq r} \delta_i \rho_i$ coincides with the minimal imaginary root of the 
affine Lie algebra associated to the extended Dynkin diagram. 
 
 The goal of this appendix is to prove the following result, which we require in the proof of Lemma~\ref{lem:existsemistable}. 
 
 \begin{prop}
 \label{prop:dimensionEstimateStableRep}
 Let $J\subseteq \{0,1,\dots, r\}$ be a non-empty subset. Assume that $V$ is a $ \theta_J $-stable quiver representation with $ v_\infty =1 $ and $v_i = n\delta_i$ for $i\in J$ and some fixed natural number $n$. Then $v_j \leq n\delta_j$ for $j\not\in J\cup \{\infty\}$.
 \end{prop}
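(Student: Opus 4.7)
The strategy is to combine structural consequences of $\theta_J$-stability with a case-by-case analysis over the ADE classification. The key observation is that, since $\theta_J(\rho_i) = 0$ for every $i \in I^c := \{0, 1, \ldots, r\} \setminus J$, no nonzero subrepresentation of $V$ can be supported entirely on $I^c$: such a subrepresentation would satisfy $\theta_J(\dim N) = 0$, violating $\theta_J$-stability. Dually, $V$ is generated as a $\Pi$-module by the subspaces $V_i$ for $i \in \{\infty\} \cup J$, so every vector at a vertex $j \in I^c$ arises by propagating, through arrows in $Q$, from a vector at one of the distinguished vertices.

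Next, I would exploit the combinatorics of extended Dynkin diagrams: after removing the vertices of $\{\infty\} \cup J$ (together with their incident edges) from the framed extended Dynkin diagram, each connected component of the remainder is a Dynkin diagram of finite type, as already recalled in the remark following Lemma~\ref{lem:existsemistable}. On each such component $Q^{(k)}$, the restriction of $V$ is a representation of the finite-type preprojective algebra $\Pi(Q^{(k)})$, which has finite representation type with indecomposables indexed by the positive roots. The stability-driven non-existence of submodules supported on $I^c$ forces a non-degeneracy condition on the boundary maps between $V|_{Q^{(k)}}$ and the fixed-dimensional spaces $V_i$ for $i \in J$ adjacent to $Q^{(k)}$. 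Because those adjacent $V_i$ have dimension exactly $n\delta_i$, the ``flow'' from $V|_{Q^{(k)}}$ to these boundary spaces is bounded, which in turn controls the dimensions on $Q^{(k)}$.

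The proof would then proceed by going through the types $\widetilde{A}_r$, $\widetilde{D}_r$, $\widetilde{E}_6$, $\widetilde{E}_7$, $\widetilde{E}_8$ and, within each, enumerating the choices of the nonempty set $J$ up to diagram symmetry. For each configuration one writes down the adjacency pattern between $J$ and the residual components $Q^{(k)}$, applies the preprojective relation at each boundary vertex, and uses the explicit form of $\delta$ together with the positive-root combinatorics of $Q^{(k)}$ to deduce $v_j \leq n\delta_j$ for each $j \in I^c$ lying in $Q^{(k)}$. A useful auxiliary observation in each case is that the restriction of $\delta$ to $Q^{(k)}$ is typically itself (a scalar multiple of) a positive root, so the target inequality matches a natural root-theoretic bound.

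The principal obstacle is the combinatorial complexity of the case analysis, particularly for $\widetilde{E}_7$ and $\widetilde{E}_8$, where many choices of $J$ must be considered and the residual components range over most Dynkin types of small rank. Each individual case reduces to a finite-dimensional linear-algebra exercise using the preprojective relations and the established non-degeneracy of the boundary maps, but collecting the cases uniformly — and verifying that no exotic dimension vector at an interior vertex can be stabilised by boundary data of the prescribed size — is where the length of the argument arises.
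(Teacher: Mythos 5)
Your opening observation---that $\theta_J$-stability forbids nonzero subrepresentations supported entirely on $I^c=\{0,\dots,r\}\setminus J$---is correct, but it is only half of what the paper extracts from stability, and on its own it does not yield $v_j\le n\delta_j$. The engine of the paper's argument is the inequality of Proposition~\ref{lem:taken from CrawBoe} (modelled on Crawley-Boevey): for every $i\notin J$,
\[
2v_i\ \le\ \sum_{\{a\in Q_1\,:\, \head(a)=i\}} v_{\tail(a)}.
\]
Its proof uses \emph{both} that the map $f\colon V_i\to V_\oplus$ built from the arrows out of $i$ is injective (your observation: no subrepresentation concentrated at $i$), \emph{and} that the map $g\colon V_\oplus\to V_i$ built from the arrows into $i$ is surjective. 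The surjectivity requires a different subrepresentation: if $\im(g)\subsetneq V_i$, then $U$ with $U_i=\im(g)$ and $U_j=V_j$ for $j\ne i$ is a proper nonzero submodule with $\theta_J(U)=0$, again destabilising $V$. Your sketch does not account for this second half, and without it there is no quantitative mechanism to propagate the fixed dimensions at $J$ inward. The remainder of the paper's proof is a purely linear ``pushing around'' of this inequality along the branches of the diagram (Lemmas~\ref{lem:basic inequalities}--\ref{lem:reduction_to_component} and the type-by-type cases); nowhere does it invoke finite representation type, indecomposables, or positive roots of the residual components.

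The alternative route you propose---restricting to the residual finite-type Dynkin components $Q^{(k)}$ and invoking positive-root combinatorics---has a structural gap. Finite representation type of $\Pi(Q^{(k)})$ does not bound dimension vectors, because indecomposables can occur with arbitrary multiplicity in a decomposition of $V|_{Q^{(k)}}$; the ``non-degeneracy of the boundary maps'' you invoke is exactly the unspecified step that must convert the bounded dimensions at $J$ into a bound on these multiplicities. Moreover, the claim that ``the restriction of $\delta$ to $Q^{(k)}$ is typically itself (a scalar multiple of) a positive root'' holds essentially only when $J=\{0\}$ (where it is the highest root of the unextended diagram); for general nonempty $J$ the components and restricted vectors need not be roots. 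Making the ``bounded flow'' step rigorous would, as far as I can see, require rediscovering the Crawley-Boevey-type estimate above. So while the intuition points in the right direction, the proposal as written is missing the central tool, and the substitute it suggests does not compensate for it.
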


The proof splits into two cases according to whether or not $0\in J$. We first treat the case $0\in J$ that is required for our conclusions about $\Hilb^{[n]}(\CC^2/\Gamma)$. We then go on to study the case $0\not\in J$ in a  lengthy case-by-case analysis beginning in Section~\ref{sec:remainingCases}.

 Our main tool for proving Proposition~\ref{prop:dimensionEstimateStableRep} is the following estimate, the proof of which is inspired by a result of Crawley-Boevey~\cite[Lemma~7.2]{CrawleyBoevey01}. This inequality is the only consequence of $\theta_J$-stability that we use in the subsequent numerical argument.

 \begin{lem}
 \label{lem:taken from CrawBoe}
Let $ V $ be a $ \theta_J $-stable quiver representation. If $ i\not\in J $, then $2v_i\le \sum_{\{a\in Q_1 \mid \head(a)=i \}} v_{\tail(a)}$.
 \end{lem}
 \begin{proof}
 Define 
 \[
	V_\oplus\coloneqq\bigoplus_{\substack{ a\in Q_1,\\ \head(a)=i }} V_{\tail(a)}.
	\]
	The maps in $V$ determined by arrows with tail and head at vertex $i$ combine to define maps $f\colon V_i\to V_\oplus$ and $g\colon V_\oplus\to V_i$ satisfying $g\circ f = 0$.
	
	If $\ker(f) \neq 0$, then $V$ admits a nonzero subrepresentation $W$ such that $W_i=\ker(f)$ and $W_j=0$ for $j\neq i$. 
	But then $W$ is a proper, nonzero subrepresentation of $V$ satisfying $\theta_J(W)=0$, thereby contradicting $\theta_J$-stability of $V$.
Thus $f$ is injective. Similarly, if $\im(g)\subsetneq V_i$, then $V$ admits a subrepresentation $U$ such that $U_i=\im(g)$ and $U_j=V_j$ for $j\neq i$. Then $U$ is a proper, nonzero subrepresentation of $V$ satisfying $\theta_J(U)=\theta_J(V)=0$ 
	which again contradicts 
	$\theta_J$-stability of $V$, so $g$ is surjective. It follows that the complex
	\begin{equation}\label{eq:stabilityComplex}
	0 \longrightarrow V_i\stackrel{f}{\longrightarrow} V_\oplus \stackrel{g}{\longrightarrow} V_i \longrightarrow 0
	\end{equation}
	has nonzero homology only at $V_\oplus$, so $\dim V_\oplus \geq 2\dim V_i$.
\end{proof}

\begin{proof}[Proof of Proposition~\ref{prop:dimensionEstimateStableRep} in the case $0\in J$]
Let $v'$ be the restriction of the stable dimension vector $v$ to the underlying extended Dynkin diagram, and let $C$ be the Cartan matrix of the same extended diagram. Define $u = v'-n\delta$. We will be done once we show that $u_i\le0$ for all $i$.

We can rephrase Lemma~\ref{lem:taken from CrawBoe} as saying that $(Cv')_i<0\textrm{ for $i \not\in J$}$. Since $C\delta = 0$, this also implies that $(Cu)_i\le 0$ for all $i$. As in Remark~\ref{rem:leftwithDynkin}, removing $J\cup\{\infty\}$ from the diagram leaves a collection of finite-type Dynkin diagrams. Let $Q'$ be any such subdiagram, let $C_{Q'}$ be its Cartan matrix and let $u_{Q'}$ be the restriction of $u$ to $Q'$. As $u_i=0$ for any $i\in J$,
it follows that $(C_{Q'}u_{Q'})_i\le 0$ for all $i$. Now $C_{Q'}^{-1}$ has only positive coefficients (see, e.g. \cite[1.157]{Rosenfeld}), and so 
\[u_{Q'}=C_{Q'}^{-1}C_{Q'}u_{Q'}\]
also satisfies $u_{Q',i}\le 0$ for all $i\in Q'$. Then $u_i\le 0$ for all $i$, giving $v'_i\leq n\delta_i$ for all $i$ as required. 
\end{proof}

\begin{remark}
Our original proof of Proposition~\ref{prop:dimensionEstimateStableRep} in the case $0\in J$ used a lengthy case-by-case argument, similar to that which follows for the case $0\not\in J$. We are grateful to the referee for suggesting this more elegant approach (see also \cite[p.6]{Nakajima20}). 
Unfortunately, we were unable to extend this technique to the case $0\not\in J$. Indeed, if we define $V_\oplus$ as the sum of all vector spaces indexed by adjacent vertices in the McKay quiver - i.e. excluding the framing vertex - then the complex~\eqref{eq:stabilityComplex} can 
have nonzero homology at the second $V_i$. 
\end{remark}

\subsection{Strategy and preparatory results for the case $0\not\in J$}\label{sec:remainingCases}
We now lay the foundation for the proof of Proposition~\ref{prop:dimensionEstimateStableRep} in the case $0\not\in J$. We argue by contradiction, performing a case-by-case analysis on Dynkin diagrams. The basic idea is as follows. First, if the inequality $v_i > n\delta_i$ holds for a vertex $i$ but not its neighbour $j$, we deduce a basic inequality~\eqref{eqn:basic} and show that this inequality can be `pushed along' the branches of the diagram (see Lemma~\ref{lem:basic inequalities}). If the diagram branches at a trivalent vertex, then we push the inequality further along at least one branch (see Lemma~\ref{lem:branching}). This leads either to a contradiction or to strong constraints on $ \dim V $. 

	\begin{lem}
	\label{lem:basic inequalities}
		\begin{enumerate}
			\item[\one] Let $ i, i-1 $ be adjacent vertices of the diagram. If $ v_i>n\delta_i $ and $ v_{i-1}\le n\delta_{i-1} $, then 
			\begin{equation}\label{eqn:basic}
			\delta_{i-1}v_i>\delta_iv_{i-1}.
			\end{equation}
			\item[\two] Suppose the vertex $i\not\in J$ is bivalent, and neither of its neightbours is $\infty$:
			\begin{equation}
			\begin{tikzpicture}[start chain]
			\dydots
			\dnode{i-1}
			\dnode{i}
			\dnode{i+1}
			\dydots
			\end{tikzpicture}
			\end{equation}
			Then $ \delta_{i-1}v_i>\delta_iv_{i-1} $ implies $ \delta_{i}v_{i+1}>\delta_{i+1}v_{i} $. If in addition $ v_i>n\delta_{i} $,  
			then $ v_{i+1}>n\delta_{i+1} $. 
		\end{enumerate}
	\end{lem}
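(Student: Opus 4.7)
The plan is to handle the two parts separately; both reduce to short arithmetic once the right ingredients are assembled.

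For part (1), the idea is to bracket the quantity $n\delta_{i-1}\delta_i$ on opposite sides using the two hypotheses. Multiplying $v_i > n\delta_i$ by $\delta_{i-1}$ and $v_{i-1} \le n\delta_{i-1}$ by $\delta_i$ yields
\[
\delta_{i-1} v_i \;>\; n\delta_{i-1}\delta_i \;\ge\; \delta_i v_{i-1},
\]
which is exactly \eqref{eqn:basic}. There is no serious obstacle here.

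For part (2), I would combine two ingredients. The first is Proposition~\ref{lem:taken from CrawBoe} applied at $i\notin J$: since $i$ is bivalent in the underlying graph with both neighbours $i-1, i+1$ distinct from $\infty$, the arrows of $Q$ with head $i$ are exactly the two ``reverse'' arrows of the two incident edges, so the inequality reads $2v_i \le v_{i-1}+v_{i+1}$. The second ingredient, from the theory of affine Lie algebras, is that at every bivalent vertex of the extended Dynkin diagram the components of the minimal imaginary root satisfy the balancing relation $2\delta_i = \delta_{i-1}+\delta_{i+1}$; this is precisely the $i$-th row of $C\delta=0$ for the affine Cartan matrix $C$.

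The rest is one line. Multiplying $2v_i \le v_{i-1}+v_{i+1}$ through by $\delta_i$, replacing $\delta_i v_{i-1}$ by the strict upper bound $\delta_{i-1} v_i$ provided by the hypothesis, and collecting the $v_i$ terms using $2\delta_i-\delta_{i-1}=\delta_{i+1}$ yields
\[
\delta_{i+1} v_i \;<\; \delta_i v_{i+1},
\]
which is the first conclusion of (2). For the second conclusion, dividing this last inequality by $\delta_i$ and substituting the extra assumption $v_i > n\delta_i$ gives $v_{i+1} > n\delta_{i+1}$.

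The only real point of care is a bookkeeping one: one must confirm that, for a bivalent vertex of the underlying graph whose neighbours are not $\infty$, the arrows of $Q$ with head $i$ contribute exactly $v_{i-1}+v_{i+1}$ to the Crawley-Boevey-type bound. Once this is in place, the whole argument is a single rearrangement, and the affine balancing relation $2\delta_i = \delta_{i-1}+\delta_{i+1}$ is the precise mechanism that lets the basic inequality \eqref{eqn:basic} propagate from a vertex to its neighbour along a linear branch of the diagram.
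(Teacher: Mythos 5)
Your proof is correct and follows essentially the same route as the paper: part (1) by the two multiplications, and part (2) by combining the affine balancing relation $2\delta_i = \delta_{i-1}+\delta_{i+1}$ (which holds because neither neighbour is $\infty$) with the bound $2v_i \le v_{i-1}+v_{i+1}$ from Proposition~\ref{lem:taken from CrawBoe}. The paper simply asserts these steps as "immediate" or "follows by combining", whereas you spell out the algebra; your bookkeeping check on the arrows with head $i$ is also the correct reading of the proposition.
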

\begin{proof}
Part~\one\ is immediate. Since $i$ and $\infty$ are not neighbours, $ 2\delta_i =\delta_{i-1}+\delta_{i+1}$ holds. Part~\two\ follows by combining this equality with 
the assumed inequality $ \delta_{i-1}v_i>\delta_iv_{i-1} $ and $ 2v_i\le v_{i-1}+v_{i+1} $ coming from Lemma~\ref{lem:taken from CrawBoe}. The last statement is again immediate.
\end{proof}

\begin{lem}\label{lem:branching}
	Suppose that the diagram has a trivalent vertex $i\not\in J$, not adjacent to the vertex $\infty$:
	\begin{equation}
	\begin{tikzpicture}
	\begin{scope}[start chain]
	\dydots
	\dnode{i-1}
	\dnode{i}
	\dnode{j}
	\dydots
	\end{scope}
	\begin{scope}[start chain=br going above]
	\chainin(chain-3);
	\dnodebr{k}
	\updots
	\end{scope}
	\end{tikzpicture}
	\end{equation} 
	and assume that $  \delta_{i-1}v_i>\delta_iv_{i-1}$. 
 \begin{enumerate}
\item[\one] At least one of the inequalities $ \delta_jv_i<\delta_iv_j $ and $\delta_{k} v_i< \delta_{i}v_k$ must hold.
\end{enumerate}
Suppose now that $v_i>n\delta_i$, that  $ \delta_{j}v_i< \delta_{i}v_j$ holds, and furthermore that the branch starting at $j$ does not branch further. Then

\begin{enumerate}[resume]

\item[\two] the branch starting at $ j $ does not contain any vertices in $ J $, and
\item[\three] the same branch must terminate at the framing vertex $\infty$, and in this case $ \delta_{i}v_j=\delta_jv_{i}+1 $.
\end{enumerate}
\end{lem}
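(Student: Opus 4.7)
The plan is to combine the stability inequality from Proposition~\ref{lem:taken from CrawBoe} at vertices $\notin J$ with the fundamental relations $2\delta_i = \sum_{a\in Q_1,\ \head(a)=i}\delta_{\tail(a)}$ holding at every vertex $i$ of the unframed McKay graph (because $\delta$ is the minimal imaginary root). The key technique will be to track and propagate the integer ``excess'' $\delta_{j_t} v_{j_{t+1}} - \delta_{j_{t+1}} v_{j_t}$ along the branch at $j$ using Lemma~\ref{lem:basic inequalities}(2).

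For part~(1), I would apply Proposition~\ref{lem:taken from CrawBoe} at the trivalent vertex $i\notin J$ to obtain $2v_i \le v_{i-1} + v_j + v_k$; multiplying by $\delta_i$ and using $2\delta_i = \delta_{i-1}+\delta_j+\delta_k$, this rearranges to
\[
(\delta_{i-1}v_i - \delta_i v_{i-1}) + (\delta_j v_i - \delta_i v_j) + (\delta_k v_i - \delta_i v_k) \le 0.
\]
Since the first summand is strictly positive by hypothesis, at least one of the remaining two must be strictly negative, which is the claim.

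For parts~(2) and~(3), assume $\delta_j v_i < \delta_i v_j$ and $v_i > n\delta_i$. Lemma~\ref{lem:basic inequalities}(2) lets me propagate both the strict inequality $\delta_{j_t}v_{j_{t+1}} > \delta_{j_{t+1}}v_{j_t}$ and the bound $v_{j_t} > n\delta_{j_t}$ along the linear branch $j = j_1, j_2, \dots$, so long as each intermediate vertex is not in $J$ and not adjacent to $\infty$. If a vertex $\ell\in J$ lay on the branch, the propagated bound $v_\ell > n\delta_\ell$ would contradict $v_\ell = n\delta_\ell$, proving~(2). For~(3), suppose the branch terminated at a non-framing leaf $j_m$: Proposition~\ref{lem:taken from CrawBoe} at $j_m$ gives $2v_{j_m}\le v_{j_{m-1}}$, while $\delta_{j_{m-1}} = 2\delta_{j_m}$ at such a leaf; combined with the propagated strict inequality these yield $\delta_{j_{m-1}}v_{j_m} > \delta_{j_m}v_{j_{m-1}} \ge 2\delta_{j_m}v_{j_m} = \delta_{j_{m-1}}v_{j_m}$, a contradiction. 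Hence the branch must reach $\infty$.

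For part~(4), the new ingredient is monotonicity of the positive integer excess $E_t \coloneqq \delta_{j_t}v_{j_{t+1}} - \delta_{j_{t+1}}v_{j_t}$: a direct computation using $2v_{j_t}\le v_{j_{t-1}}+v_{j_{t+1}}$ and $2\delta_{j_t} = \delta_{j_{t-1}}+\delta_{j_{t+1}}$ at each bivalent interior vertex gives $E_t - E_{t-1} = \delta_{j_t}(v_{j_{t-1}}+v_{j_{t+1}}-2v_{j_t})\ge 0$. Since the branch is linear and $\infty$ is adjacent only to $0$, the vertex $0$ must be bivalent in the framed quiver and hence monovalent in the McKay graph, so $\delta_0=1$ and its unique McKay-neighbour on the branch has $\delta$-value~$2$. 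Proposition~\ref{lem:taken from CrawBoe} at $0\notin J$ then gives $2v_0\le 1 + v_{\mathrm{prev}}$, so the final excess satisfies $2v_0 - v_{\mathrm{prev}}\le 1$. Combined with $E_0\ge 1$ from part~(1) and monotonicity, this forces $E_0 = \delta_i v_j - \delta_j v_i = 1$. The main obstacle here is establishing monotonicity of $E_t$; once that is in hand, part~(4) reduces to the tight numerical pinch between the propagated strict inequality and the stability bound at~$0$.
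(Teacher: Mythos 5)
Your proof is correct and follows essentially the same route as the paper. Part~(1) is the same weighted rearrangement of Proposition~\ref{lem:taken from CrawBoe}; part~(2) and the termination claim in part~(3) are proved by the same propagation along the branch via Lemma~\ref{lem:basic inequalities}(2), with the same contradiction at a non-framing leaf using $2\delta_\ell = \delta_{\ell'}$. For the equality in part~(3), the paper runs an induction from the vertex $0$ back toward $i$, establishing $\delta_{m-1}v_m = \delta_m v_{m-1}+1$ at each step; your monotonicity of the excess $E_t$ is exactly the same computation (multiply the stability inequality at $j_t$ by $\delta_{j_t}$ and substitute $2\delta_{j_t} = \delta_{j_{t-1}}+\delta_{j_{t+1}}$), packaged so that only the bounds $E_0\ge 1$ and $E_{m-1}\le 1$ are needed rather than the full induction; this is a mild cosmetic streamlining. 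Two small nits: the statement has three numbered parts, not four (what you call part~(4) is the second clause of part~(3)); and $E_0\ge 1$ comes from the standing hypothesis $\delta_j v_i < \delta_i v_j$ in parts~(2)--(3), not directly from part~(1).
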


\begin{remark}
 The only framed extended Dynkin diagrams where a trivalent vertex is adjacent to the framing vertex are of type $A_i$ for $i>1$. We handle the case of such a vertex not being in $J$ in Lemma A.8.
\end{remark}

\begin{proof}
For \one, combining $2\delta_{i}=\delta_{i-1}+\delta_j+\delta_{k} $ with $2v_i\le v_{i-1}+v_k+v_j $ and $ \delta_{i-1}v_i>\delta_iv_{i-1} $ leads to $\delta_{j}v_i+\delta_{k} v_i<\delta_i v_j+\delta_{i}v_k $ which implies the result. For \two\ and \three, we denote the vertices as 
		\begin{equation}
	\begin{tikzpicture}
	\begin{scope}[start chain]
	\dydots
	\dnode{i}
	\dnode{j}
	\dydots
	\dnode{j+l-1}
	\dnode{j+l}
	\end{scope}
	\begin{scope}[start chain=br going above]
	\chainin(chain-2);
	\updots
	\end{scope}
	\end{tikzpicture}
	\end{equation} 
	if the branch does not contain the framing vertex, or
	\begin{equation}
	\begin{tikzpicture}
	\begin{scope}[start chain]
	\dydots
	\dnode{i}
	\dnode{j}
	\dydots
	\dnode{j+l-1}
	\dnode{j+l}
	\dnode{\infty}
	\end{scope}
	\begin{scope}[start chain=br going above]
	\chainin(chain-2);
	\updots
	\end{scope}
	\end{tikzpicture}
	\end{equation} 
    if it does. To simplify notation, we take $ j-1=i $ in the following argument. 
    One of the following must occur. 
	\begin{itemize}
\item \emph{The branch contains another vertex in $ J $.}
	Suppose that $ j^\prime $ is the node with smallest index on the branch such that $ j^\prime \ne i $ and $ j^\prime\in J $. Lemma~\ref{lem:basic inequalities}\two\ gives $ \delta_{j^\prime-1}v_{j^\prime}>\delta_{j^\prime}v_{j^\prime-1} $ and $ v_{j^\prime}>n\delta_{j^\prime} $, contradicting $j^\prime\in J$.
	
\item \emph{The branch contains no vertices in $ J\cup \infty $.}	Repeated applications of Lemma~\ref{lem:basic inequalities}\two\ show that $ \delta_{j+l-1}v_{j+l}>\delta_{j+l}v_{j+l-1} $. However, since $ 2\delta_{j+l}=\delta_{j+l-1} $, this implies $ 2v_{j+l}>v_{j+l-1} $, contradicting Lemma~\ref{lem:taken from CrawBoe}.
	
\item \emph{The branch contains no vertices of $J$, and terminates at $\infty $.}
   We prove a slightly stronger statement, namely that for any vertex $m\neq \infty $ on the branch, we have $\delta_{m-1}v_{m} =\delta_{m}v_{m-1} +1$. We proceed by induction on the number of edges that lie between $\infty$ and $m$. For the base case $m=j+l$, note that $ \delta_{j+l-1}v_{j+l}>\delta_{j+l}v_{j+l-1} $ implies $ 2v_{j+l}>v_{j+l-1} $. However, since $ 2v_{j+l}\le v_{j+l-1}+1 $ by Lemma~\ref{lem:taken from CrawBoe}, we must have $ 2v_{j+l}= v_{j+l-1}+1 $. If there is more than one edge between $\infty$ and $m$, then the induction hypothesis gives $\delta_{m}v_{m+1}=\delta_{m+1}v_{m}+1 $.
   Combining this with $2v_m\le v_{m-1} + v_{m+1}$ from Lemma~\ref{lem:taken from CrawBoe} and $2\delta_m = \delta_{m+1}+\delta_{m-1}$ shows that $\delta_{m-1}v_m\le \delta_{m}v_{m-1} +1$. Lemma~\ref{lem:basic inequalities}\two\ gives $ \delta_{m-1}v_{m} >\delta_{m}v_{m-1} $ and the result follows.
\end{itemize}
This concludes the proof.
\end{proof}

\subsection{Proof for the case $0\not\in J$, types $A_1$ and $D_4$}
\label{sec:A1D4}
\begin{lem}\label{lem:final2cases}
Proposition~\ref{prop:dimensionEstimateStableRep} holds for $ A_1 $ and $ D_4 $.
\end{lem}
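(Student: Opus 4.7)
The plan is to handle both remaining diagrams by direct case analysis, using only the stability inequality from Proposition~\ref{lem:taken from CrawBoe} together with the integrality of each $v_i\in\ZZ$.

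For $A_1$, I would label the vertices $\{\infty,0,1\}$, with $\infty\text{--}0$ a single edge and $0\text{--}1$ a double edge, so $\delta_0=\delta_1=1$. Proposition~\ref{lem:taken from CrawBoe} then gives $2v_0\leq 1+2v_1$ at vertex $0$ and $2v_1\leq 2v_0$ at vertex $1$. The case $J=\{0,1\}$ is vacuous; for $J=\{0\}$ we have $v_0=n$, so $v_1\leq v_0=n$; and for $J=\{1\}$ we have $v_1=n$, so $2v_0\leq 1+2n$, whence $v_0\leq n$ by integrality.

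For $D_4$, I write $c$ for the central vertex (with $\delta_c=2$) and $0,a,b,d$ for the four outer vertices (each with $\delta=1$), where $0$ is adjacent to $\infty$. Proposition~\ref{lem:taken from CrawBoe}, applied at any vertex not in $J$, yields
\[
 2v_0\leq 1+v_c,\qquad 2v_c\leq v_0+v_a+v_b+v_d,\qquad 2v_x\leq v_c\ \text{for }x\in\{a,b,d\}.
\]
The argument splits on whether $c\in J$ and whether $0\in J$. If $c\in J$ then $v_c=2n$ and each remaining inequality immediately supplies the required bound. If $c\notin J$, set $K\coloneqq J\cap\{a,b,d\}$; summing the stability inequality at $c$ with those at the $x\in\{a,b,d\}\setminus J$ (and using $v_x=n$ for $x\in K$) yields $v_c(1+|K|)\leq 2v_0+2|K|n$. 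If $0\in J$, substituting $v_0=n$ gives $v_c\leq 2n$ directly; if $0\notin J$, combining with $2v_0\leq 1+v_c$ instead reaches $|K|\,v_c\leq 1+2|K|n$, which forces $v_c\leq 2n$ whenever $|K|\geq 2$ by integrality. In all of these subcases $v_c\leq 2n$ gives $v_0\leq n$ via $2v_0\leq 1+v_c$, and then each outer $v_x$ with $x\notin J$ satisfies $v_x\leq v_c/2\leq n$.

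The step I expect to be the main obstacle is the final subcase $0\notin J$ with $|K|=1$, which by the symmetry of the diagram I may take to be $J=\{a\}$; the linear inequalities alone give only $v_c\leq 2n+1$ and $v_0\leq n+1$, so integrality must do the remaining work. Supposing for contradiction that $v_0=n+1$, the inequality $2v_0\leq 1+v_c$ forces $v_c\geq 2n+1$, while combining $2v_c\leq v_0+n+v_b+v_d$ with $v_b+v_d\leq v_c$ forces $v_c\leq v_0+n=2n+1$; hence $v_c=2n+1$. Substituting back gives $v_b+v_d\geq 2v_c-v_0-n=2n+1$, whereas $2v_b\leq v_c=2n+1$ and $v_b\in\ZZ$ force $v_b\leq n$, and likewise $v_d\leq n$, so $v_b+v_d\leq 2n$, a contradiction. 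Therefore $v_0\leq n$, and then $v_c\leq v_0+n\leq 2n$ and $v_x\leq v_c/2\leq n$ for $x\in\{b,d\}$, completing the proof for $D_4$.
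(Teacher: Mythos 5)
Your proposal is correct, and it uses a genuinely different route for the $D_4$ case. For $A_1$ the two arguments coincide: both apply Proposition~\ref{lem:taken from CrawBoe} at the vertex not in $J$ and use integrality to finish when $J=\{1\}$. For $D_4$, the paper's proof invokes Lemma~\ref{lem:branching} for the subcases with $0\notin J$ (and, for $0\in J$, separately argues that $v_i>n\delta_i$ at any outer vertex forces $v_2>2n$). You instead work entirely from the three stability inequalities at vertices not in $J$, aggregate them into the single inequality $(1+|K|)\,v_c\le 2v_0+2|K|n$ where $K=J\cap\{a,b,d\}$, and then use integrality to close the gap. This avoids Lemma~\ref{lem:branching} altogether, making the $D_4$ case self-contained given only Proposition~\ref{lem:taken from CrawBoe}. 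What the paper's route buys is brevity in the subcases where Lemma~\ref{lem:branching} applies immediately; what your route buys is a uniform, elementary linear-arithmetic argument, at the modest cost of the extra integrality bookkeeping in the $|K|=1$, $0\notin J$ subcase. One small presentational point: when you say you ``sum'' the inequality at $c$ with those at $x\notin J$, what you actually do is double the inequality at $c$ and replace each $2v_x$ with $v_c$ for $x\notin J$ and with $2n$ for $x\in K$; stating it that way would make the derivation of $(1+|K|)v_c\le 2v_0+2|K|n$ transparent. Your case split is exhaustive: $c\in J$; $c\notin J$, $0\in J$; $c\notin J$, $0\notin J$, $|K|\ge 2$; and $c\notin J$, $0\notin J$, $|K|=1$ — the remaining possibility $|K|=0$ would force $J=\varnothing$, which is excluded.
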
 

\begin{proof} For type $A_1$, we have the diagram
\begin{equation}
\begin{tikzpicture}[start chain]
\dnode{\infty}
\dnode{0}
\dnode[ch]{1}
\path (chain-2) -- node[anchor=mid] {{\raisebox{-0.1ex}{$=\joinrel=$}}} (chain-3);
\end{tikzpicture}
\end{equation} where the symbol {\raisebox{-0.1ex}{$=\joinrel=$}}  indicates that the diagram has two edges. The only remaining case is  $J =\{1\}$. A straightforward adaptation of Lemma~\ref{lem:taken from CrawBoe} shows that if $J = \{1\}$, then $2v_0\le 2v_1+1=2n+1$, giving $v_0\le n$.

For type $ D_4 $, the diagram is:

\begin{equation}
\begin{tikzpicture}
\begin{scope}[start chain]
\dnode{\infty}
\dnode{0}
\dnodebr{2}
\dnode{3}
\end{scope}
\begin{scope}[start chain=br going above]
\chainin(chain-3);
\dnodebr{1}
\end{scope}
\begin{scope}[start chain=br going below]
\chainin(chain-3);
\dnode{4}
\end{scope}
\end{tikzpicture}.
\end{equation}
 Suppose without loss of generality that $ 1 \in J$. Then any other vertex $ i $ with $ v_i>n\delta_{i} $ will, by Lemma~\ref{lem:branching} or Lemma~\ref{lem:taken from CrawBoe} give that $ v_2>2n $. The same lemmas show that 
\begin{equation} 4v_2\le 2v_0+2v_1+2v_3+2v_4\le 2n + 3v_2 +1 \label{eq_last_ineq}\end{equation}
and thus $ v_2\le 2n+1 $. So $ v_2 =2n+1 $, but then Lemma~\ref{lem:taken from CrawBoe} gives that $ v_1=v_3=v_4 = n $. Plugging this into~\eqref{eq_last_ineq} gives $ 6n + 3 = 3v_2\le 6n +1 $, a contradiction.
\end{proof}

\subsection{Proof when $0\not\in J$, the general case}
\label{sec:generalcase}
For the rest, we need to handle each diagram type separately.

\begin{lem}\label{lem:dimEstimateTypeA}
	Proposition~\ref{prop:dimensionEstimateStableRep} holds for any diagram of type $ A_i $ with $ i>1 $.
\end{lem}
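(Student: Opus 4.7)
The case $0\in J$ is already handled by Lemma~\ref{lem:reduction_to_component}, so the plan is to treat the remaining case $0\notin J$, in which $J\subseteq\{1,\dots,i\}$ is nonempty. In type $A_i$ all $\delta_j=1$, so the goal reduces to showing $v_j\le n$ for every $j\notin J\cup\{\infty\}$. The strategy is to convert everything into a one-dimensional discrete convexity problem on the cycle $\{0,1,\dots,i\}$.

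First I would delete the $J$-vertices from the cycle, partitioning the remaining vertices into arcs of consecutive non-$J$ vertices; exactly one of these arcs (call it the \emph{exceptional} arc $A$) contains the vertex $0$. For any non-exceptional arc, every interior vertex is bivalent, lies outside $J$, and is not adjacent to $\infty$, so Proposition~\ref{lem:taken from CrawBoe} gives the discrete convexity inequality $2v_\ell\le v_{\ell-1}+v_{\ell+1}$, while both endpoints of the arc lie in $J$ and therefore carry the value~$n$. A standard discrete maximum principle (propagation of maxima) then yields $v_\ell\le n$ throughout the arc.

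The main obstacle is the exceptional arc, because at the vertex $0$ Proposition~\ref{lem:taken from CrawBoe} only yields the weaker inequality $2v_0\le v_\infty+v_1+v_i=1+v_1+v_i$; the extra $+1$ from the framing prevents a direct application of the maximum principle. I would parametrise $A$ as $c_0,c_1,\dots,c_{m+1}$ with $c_0,c_{m+1}\in J$ (identified as the same physical cycle vertex when $|J|=1$) and $c_p=0$ for some $1\le p\le m$. Setting $a_\ell\coloneqq v_{c_\ell}-n$, the data then satisfy $a_0=a_{m+1}=0$; the discrete second difference $\Delta^2 a_\ell\coloneqq a_{\ell-1}-2a_\ell+a_{\ell+1}$ is nonnegative for $\ell\ne p$, and $\Delta^2 a_p\ge -1$.

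The heart of the argument is to conclude $a_\ell\le 0$ for every $\ell$ by contradiction: assume $M\coloneqq\max_\ell a_\ell\ge 1$ (using $a_\ell\in\ZZ$) and let $\ell^*$ be the smallest index with $a_{\ell^*}=M$. The cases $\ell^*\in\{0,m+1\}$ immediately give $M=0$, whereas for $\ell^*\in\{1,\dots,m\}\setminus\{p\}$ ordinary convexity is ruled out at once, since $a_{\ell^*-1}\le M-1$ by minimality and $a_{\ell^*+1}\le M$ force $a_{\ell^*-1}+a_{\ell^*+1}\le 2M-1<2M$. The subtle case is $\ell^*=p$, and this is where I expect the real work to lie: integrality together with $\Delta^2 a_p\ge -1$ pins down $a_{p-1}=M-1$ and $a_{p+1}=M$ exactly, after which iterated convexity at the bivalent vertices $p+1,p+2,\dots,m$ propagates the value $M$ forward, ultimately forcing $a_{m+1}=M$ and hence $M=0$, a contradiction. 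It is this last step, in which the single unit of slack in $\Delta^2 a_p$ is entirely absorbed by the drop $a_{p-1}\to a_p$ so that the residual equality $a_{p+1}=M$ can propagate to the boundary, that genuinely exploits the fact (noted in the remark after Lemma~\ref{lem:branching}) that the unique trivalent vertex in a type-$A_i$ diagram is adjacent to the framing vertex.
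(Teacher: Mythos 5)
Your proof is correct, and it covers the needed cases. The treatment of the non-exceptional arcs by the discrete maximum principle is sound, and in the exceptional-arc argument the subtle case $\ell^*=p$ is correctly resolved: integrality of $a_\ell$ together with $\Delta^2 a_p\ge -1$ forces $a_{p-1}=M-1$ and $a_{p+1}=M$, and ordinary convexity at $p+1,\dots,m$ then propagates $a_\ell=M$ to the endpoint $a_{m+1}=0$, giving $M=0$.

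The route differs from the paper's in packaging rather than substance. The paper works with the ``basic inequality'' apparatus of Lemma~\ref{lem:basic inequalities}: it locates a crossing pair $(k,k+1)$ with $v_k>n\ge v_{k+1}$ on the arc toward a nearest $J$-vertex, pushes $v_k>v_{k+1}$ vertex-by-vertex to vertex $0$, repeats on the other side, and contradicts $2v_0\le v_1+v_r+1$ by integrality; the non-exceptional arcs are handled by invoking Lemma~\ref{lem:reduction_to_component}. You instead phrase everything uniformly as a discrete convexity statement on $a_\ell=v_{c_\ell}-n$, with Proposition~\ref{lem:taken from CrawBoe} supplying $\Delta^2 a_\ell\ge 0$ away from $0$ and $\Delta^2 a_p\ge -1$ at $0$, so the maximum principle does all the work at once. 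The two proofs rely on the same two inputs (the preprojective convexity estimate and the single unit of slack contributed by the framing edge at vertex $0$), and the integer unit of slack is absorbed in the same place; your version is arguably cleaner in that it dispenses with the crossing-pair bookkeeping and handles the $|J|=1$ case seamlessly, at the cost of not reusing the ``push the basic inequality'' machinery that the paper needs for the $D$ and $E$ cases anyway. One small note: the closing remark about vertex $0$ being trivalent and adjacent to $\infty$ isn't really what the argument exploits --- what matters is only that $0\notin J$ has an extra neighbour $\infty$ contributing $v_\infty=1$ to its convexity inequality --- but this is a stylistic point, not a gap.
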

\begin{proof}
	We number the vertices as follows:
	\begin{equation}
	\begin{tikzpicture}[start chain,node distance=1ex and 2em]
	\dnode{r}
	\dnode{r-1}
	\dydots
	\dnode{2}
	\dnode{1}
	\begin{scope}[start chain=br going above]
	\chainin(chain-3);
	\node[ch,join=with chain-1,join=with chain-5,label={below:\(\scriptstyle{0}\)}] {};
	\dnodeu{\infty};
	\end{scope}

	\end{tikzpicture}.		
	\end{equation}
	Assume that some vertex $ k'\ne \infty $ has $ v_{k'}>n\delta_{k'} =n $. Since $ 0\not\in J $, we may consider a subdiagram
	\begin{equation}
	\begin{tikzpicture}[start chain]
	
	\dydots
	\dnode{i}
	\dydots
	\dnode{r}
	\dnode{0}
	\dnode{1}
	\dydots
	\dnode{j}
	\dydots
	\begin{scope}[start chain=br going above]
	\chainin(chain-5);
	\dnodebr{\infty}
	\end{scope}
	\end{tikzpicture}		
	\end{equation}
	where $ i,j$ (possibly equal) are the only vertices in $ J $, with $ k' $ some vertex in this subdiagram.  We can without loss of generality assume $ 0\le k'<j $. Then there are adjacent vertices $ k, k+1 $ such that $ k'\le k,\  k+1\le j $ with $ v_k>n\ge v_{k+1}$. Repeatedly applying Lemma~\ref{lem:basic inequalities} gives
	\begin{equation}\label{eqdoubleineq}
	v_0>v_1>n.
	\end{equation}  
	There must also be adjacent vertices $ l, l+1 $ between $ i $ and $ 0 $ such that $ v_{l+1}>v_l$. 
In a similar way, this leads to $v_0>v_r$. Combining with~\eqref{eqdoubleineq}, we deduce $2v_0> v_1+v_r +1 $, contradicting Lemma~\ref{lem:taken from CrawBoe}.  
\end{proof}

\begin{lem}\label{lem:dimEstimateTypeD}
	Proposition~\ref{prop:dimensionEstimateStableRep} holds for diagrams of type $ D_i $, $ i>4 $.
\end{lem}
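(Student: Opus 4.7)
The approach is to extend the proof strategy of Lemma~\ref{lem:reduction_to_component} to the case $0\notin J$, in the same spirit as Lemma~\ref{lem:dimEstimateTypeA} handles type $A_i$. Recall the structure of the framed extended diagram of type $D_i$ for $i>4$: there are two trivalent vertices $T_1,T_2$ (with $T_1$ closer to $\infty$), four tips $0,1,d_1,d_2$ of dimension $\delta=1$ (with $0$ adjacent to $\infty$, $1$ adjacent to $T_1$, and $d_1,d_2$ adjacent to $T_2$), and a chain of $i-4$ bivalent vertices of dimension $\delta=2$ between $T_1$ and $T_2$ (for $i=5$ the two trivalent vertices are themselves adjacent). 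By Lemma~\ref{lem:reduction_to_component} I may assume $0\notin J$, and that any hypothetical $k'$ with $v_{k'}>n\delta_{k'}$ lies in the connected component $C_\infty$ of $\infty$ in the diagram obtained by deleting $J$; the path from $\infty$ to $k'$ therefore avoids $J$ entirely.

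Walking outward from $\infty$ along this path, let $k^*$ be the first vertex at which $v_{k^*}>n\delta_{k^*}$. If $k^*\ne 0$, the pair $(k^{**},k^*)$ consisting of $k^*$ and its predecessor $k^{**}$ yields the basic inequality $\delta_{k^{**}}v_{k^*}>\delta_{k^*}v_{k^{**}}$ via Lemma~\ref{lem:basic inequalities}(1). If $k^*=0$, Proposition~\ref{lem:taken from CrawBoe} at $0\notin J$ gives $v_{T_1}\ge 2v_0-1\ge 2n+1$, which forces $T_1\notin J$ and $T_1\in C_\infty$ with $v_{T_1}>n\delta_{T_1}$, and the analysis restarts one step further out.

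Once a basic inequality is in hand, I would propagate it using Lemma~\ref{lem:basic inequalities}(2) at each bivalent vertex of $C_\infty$ along the propagation path, and handle the trivalent vertices $T_1,T_2$ via Lemma~\ref{lem:branching}. The key observation is that, among the non-entry branches at $T_1$ and $T_2$, only the branch $\{0,\infty\}$ at $T_1$ terminates at the framing vertex; the single-vertex tip branches $\{1\}$, $\{d_1\}$, $\{d_2\}$ do not, so exit through any of them is forbidden by Lemma~\ref{lem:branching}(3). Hence any propagation reaching $T_2$ gives an immediate contradiction, while at $T_1$ entered from the chain side the only admissible exit is along $\{0,\infty\}$, in which case Lemma~\ref{lem:branching}(3) forces the tight equality $v_{T_1}=2v_0-1$. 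Propagation also cannot cross a $J$-vertex on the chain, because the ``if in addition'' clause of Lemma~\ref{lem:basic inequalities}(2) would force $v_i>n\delta_i$ there, contradicting $v_i=n\delta_i$.

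The main obstacle will be the case where the bad set $S=\{i\ :\ v_i>n\delta_i\}$ already contains both $0$ and $T_1$, so that no basic-inequality pair is available close to $\infty$. Here I would iterate Proposition~\ref{lem:taken from CrawBoe} at $0$, $T_1$, tip $1$, and at chain vertices in turn: the bounds $2v_0\le 1+v_{T_1}$, $2v_1\le v_{T_1}$, and $2v_{T_1}\le v_0+v_1+v_{c_1}$ (where $c_1$ is the neighbour of $T_1$ on the chain toward $T_2$, or $c_1=T_2$ when $i=5$), together with $v_j=n\delta_j$ for $j\in J$, push $S$ step by step along the chain toward $T_2$, where the previous paragraph's argument yields the desired contradiction. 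A case analysis in the style of Lemma~\ref{lem:dimEstimateTypeA}, tracking the positions of $J$-vertices and $S$-vertices along the chain between $T_1$ and $T_2$, handles the remaining configurations.
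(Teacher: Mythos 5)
Your overall shape is right: reduce to $0\notin J$ via Lemma~\ref{lem:reduction_to_component}, locate a violating pair near $\infty$, propagate the basic inequality through bivalent vertices with Lemma~\ref{lem:basic inequalities}(2), and invoke Lemma~\ref{lem:branching} at the trivalent vertices. You also correctly identify the ``main obstacle'' — the configuration where $0$ and $T_1$ are simultaneously violated so no strict basic-inequality pair is available near $\infty$. This matches the paper's case~(3).

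However, there is a genuine gap in how you close that obstacle case. Iterating Proposition~\ref{lem:taken from CrawBoe} at $0$, $1$, $T_1$, and the chain vertices yields only the \emph{weak} chain of inequalities $v_{T_1}\le v_{c_1}\le\dots\le v_{T_2}$, which shows all these vertices lie in $S$ but does \emph{not} produce the strict inequality $\delta_{c_{\mathrm{last}}}v_{T_2}>\delta_{T_2}v_{c_{\mathrm{last}}}$ required as the hypothesis of Lemma~\ref{lem:branching} at $T_2$. So ``the previous paragraph's argument'' does not apply at $T_2$, contrary to what you claim; if all the Prop bounds are tight, every $v_{c_i}$ equals a common value $M>2n$ and Lemma~\ref{lem:branching} is never triggered. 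To rule this out one must extract an arithmetic contradiction directly: tightness forces e.g.\ $2v_{r-1}=2v_r=M$ if the far tips avoid $J$, which makes $M$ even, while the bounds $2v_0\le 1+M$ and $M\le v_0+v_1$ near $T_1$ pin $M$ down to $2n+1$ (or $\le 2n$), odd (or too small) — contradiction. The paper achieves the same effect by propagating in the \emph{opposite} direction: it takes the maximal violator on the chain and pushes the basic inequality \emph{toward} $T_1=2$ and $\infty$, where Lemma~\ref{lem:branching}(3) produces the exact identity $2v_0=v_2+1$; combined with $2v_1\le v_2$ and Proposition~\ref{lem:taken from CrawBoe} at $v_2$, this gives $4v_2\le 4v_2-1$ at a stroke. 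Propagating outward, as you do, pushes the tight-equality book-keeping to $T_2$ where the tip structure is symmetric and gives no $\pm 1$ offset for free — so your route needs the extra parity argument to be stated explicitly, and the final sentence about ``a case analysis in the style of Lemma~\ref{lem:dimEstimateTypeA}'' is carrying the real weight of the proof without being written down.

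A smaller inaccuracy: you describe $T_1$ as ``entered from the chain side'' with exit along $\{0,\infty\}$, but when walking outward from $\infty$ the propagation enters $T_1$ from the $\{0,\infty\}$ side, so the admissible exit is toward the chain (the exit toward the tip $1$ being killed by Lemma~\ref{lem:branching}(2)--(3)); the $\{0,\infty\}$-exit reading is the one that applies in the paper's inward-propagation setup, not yours.
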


\begin{proof}
	We number the vertices as follows:
	\begin{equation}
	\begin{tikzpicture}
	\begin{scope}[start chain]
	\dnode{\infty}
	\dnode{0}
	\dnodebr{2}
	\dydots
	\dnode{r-2}
	\dnode{r-1}
	\end{scope}
	\begin{scope}[start chain=br going above]
	\chainin(chain-5);
	\dnodebr{r}
	\end{scope}
	\begin{scope}[start chain=br going below]
	\chainin(chain-3);
	\dnode{1}
	\end{scope}
	\end{tikzpicture}
	\end{equation}
	
 Our proof of Proposition~\ref{prop:dimensionEstimateStableRep} for the case $0\in J$ leaves only three possible configurations for the nodes in $J$ when $0\not\in J$, up to symmetry of the diagram. We prove Proposition~\ref{prop:dimensionEstimateStableRep} by contradiction in each case. 
\begin{enumerate}[leftmargin=*,align=left] 
	\item \emph{There is an $i$ such that $ 2\le i\le r-1 $,  $ v_i>n\delta_i = 2n $, and all $ j\in J $ have $ i<j $.} Let $ k $ be  maximal among the vertices such that $ v_k>n\delta_k $. If $ k\le r-2 $, we have $ \delta_{k+1}v_k>\delta_kv_{k+1} $. Otherwise, $ k = r-1 $. We must have $ J = \{r\} $, and by Lemma~\ref{lem:branching} we get $ \delta_{r-3}v_{r-2}>\delta_{r-2}v_{r-3} $.  By symmetry, the case $ k = r$ also leads to $ \delta_{r-3}v_{r-2}>\delta_{r-2}v_{r-3} $.

    Both cases lead (by Lemma~\ref{lem:basic inequalities}) to $ v_2>v_3 $, that is, $ v_2-1 \ge v_3  $. Then Lemma~\ref{lem:branching} gives $ 2v_0 = v_2+1 $ and $ 2v_1\le v_2 $. Combining these with         Lemma~\ref{lem:taken from CrawBoe} leads to 
    \[ 4v_2\le 2v_3 + 2v_1+2v_0\le 4v_2-1, \] which is absurd.
	
	\item \emph{$ v_1>n\delta_1=n $, and all $ j\in J $ have $ j>2 $.} This implies $v_2>2n$. Let $j$ be the least vertex such that $v_j\le n\delta_j$. Applying Lemmas~\ref{lem:basic inequalities}~and~\ref{lem:branching} to the vertices $j-1,j$ (or if $j=r$, the vertices $r, r-2$) we again find $v_2>v_3$. Then the conclusion of case (1) applies.
	
	\item \emph{$ v_0>n\delta_0 $, and all $ j\in J $ have $ j\ge2 $.} If $ 2\in J $, we have $ v_2=2n $, and then $ v_1>n $ leads to $ 2v_1 >2n+1 $, contradicting Lemma~\ref{lem:taken from CrawBoe}. If $ 2\not\in J $ we can again take $j$ as the least vertex with $v_j\le n\delta_j$ and argue as in case (2).
\end{enumerate}

\noindent Therefore, if $D_i$ has $i>4$, then  Proposition~\ref{prop:dimensionEstimateStableRep} holds.
\end{proof}

To conclude, we consider the diagrams $E_6$, $E_7$ and $E_8$. As the proof strategies for these are very similar, we only include the full argument for the $E_8$ case.

\begin{lem}\label{lem:dimEstimateE8}
Proposition~\ref{prop:dimensionEstimateStableRep} holds for type $ E_8 $.
\end{lem}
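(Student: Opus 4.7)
The plan is to mirror the proof of Lemma~\ref{lem:dimEstimateTypeD} for type~$D_i$ with $i>4$, adapted to the framed extended $E_8$ diagram. Label the affine vertex as $0$ and the main chain as $\infty\text{--}0\text{--}1\text{--}2\text{--}3\text{--}4\text{--}5\text{--}6\text{--}7$, with the branch vertex $8$ attached to the unique trivalent vertex~$5$; the minimal imaginary root is then $\delta=(1,2,3,4,5,6,4,2,3)$ at the vertices $(0,\dots,8)$. Since the trivalent vertex sits five edges away from $\infty$, the caveat in Lemma~\ref{lem:branching} never applies, and both of the non-$\infty$ arms emanating from vertex~$5$ are short (lengths $2$ and $1$). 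By Lemma~\ref{lem:reduction_to_component}, we may assume $0\notin J$.

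Suppose for contradiction that some $k\notin J\cup\{\infty\}$ satisfies $v_k>n\delta_k$. We choose a pair of adjacent vertices on the path from $k$ toward $\infty$ at which the transition from $v\le n\delta$ to $v>n\delta$ first occurs; Lemma~\ref{lem:basic inequalities}(1) furnishes the basic inequality~\eqref{eqn:basic}, which we then propagate along the diagram via Lemma~\ref{lem:basic inequalities}(2). If the propagation reaches a vertex of $J$, we get an immediate contradiction. If it reaches the trivalent vertex~$5$, Lemma~\ref{lem:branching}(1) ensures that the basic inequality continues onto at least one outgoing arm, and Lemma~\ref{lem:branching}(2)--(3) then forces that arm to avoid $J$ and to terminate at $\infty$, which is impossible since the two short arms end at the leaves $7$ and $8$.

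We split into cases according to the position of $J$ relative to vertex~$5$: either $J$ lies entirely in one arm emanating from $5$, or $J$ meets at least two arms. In the former case, pushing the inequality toward $\infty$ along the main chain forces $v_0>n$, which combined with Proposition~\ref{lem:taken from CrawBoe} at the framing edge (giving $2v_0\le 1+v_1$) contradicts the values of $\delta$ along the chain. In the latter case, we push inequalities out along the arms not containing $k$ and combine them with Proposition~\ref{lem:taken from CrawBoe} applied at vertex~$5$, namely $2v_5\le v_4+v_6+v_8$, using the identity $2\delta_5=\delta_4+\delta_6+\delta_8$ to extract a numerical contradiction, precisely as in the $D$ case.

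The main obstacle is the sheer volume of subcases, since $E_8$ has a longer main arm than the preceding types and several possible configurations for $J\subseteq\{0,\dots,8\}$ with $0\notin J$; nonetheless, no new idea beyond those of Lemmas~\ref{lem:dimEstimateTypeA}--\ref{lem:dimEstimateTypeD} is required, and the argument is entirely mechanical once the bookkeeping at the trivalent vertex is properly set up.
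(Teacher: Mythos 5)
Your overall strategy—reuse Lemmas~\ref{lem:basic inequalities}, \ref{lem:branching} and Proposition~\ref{lem:taken from CrawBoe}, split into cases, and push the basic inequality until a numerical contradiction appears—is the same as the paper's, and the relabelled diagram and value $\delta=(1,2,3,4,5,6,4,2,3)$ are correct. But as written the argument has a genuine gap: it is a sketch that asserts numerical contradictions without deriving them, and the places where it does commit to an argument do not close. Concretely, in your ``$J$ lies in one arm'' case, propagating the inequality to $v_0>n$ and combining with $2v_0\le 1+v_1$ gives only $v_1\ge 2n-1$, hence $v_1\ge 2n = n\delta_1$. That is \emph{not} a contradiction unless you additionally know $v_1\le 2n$, which requires either $1\in J$ or a further propagation argument; you have not supplied either. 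The sentence ``contradicts the values of $\delta$ along the chain'' is doing real work and needs to be replaced by an actual estimate.

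More importantly, ``precisely as in the $D$ case'' and ``entirely mechanical'' are not accurate for $E_8$. The paper's proof is not a routine transcription of the $D$ argument: it carries out four distinct cases keyed to the position of the minimal offending vertex $k$ relative to the trivalent vertex, and each case uses a bespoke arithmetic manipulation tied to the specific $\delta$-values (for instance the refined equality $\delta_{m-1}v_m=\delta_m v_{m-1}+1$ from Lemma~\ref{lem:branching}(3), giving relations like $5v_4+1=6v_5$, and chains like $12v_4\le 6v_2+6v_3+6v_5$ whose resolution depends on there being no integer solution). Your alternative case split (``$J$ in one arm'' vs.\ ``$J$ meets at least two arms'') is also not obviously exhaustive in the right way—for example when $J$ is a single vertex on a short branch (the paper's case~(4), $J=\{2\}$ in its labelling) the offending vertex lies on a different arm and the ``push toward $\infty$'' template is not the one that produces the contradiction. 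The proposal therefore identifies the correct toolbox but omits the part of the proof where the actual content lies.
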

\begin{proof}

	We number the vertices as follows:
		\begin{equation}
	\begin{tikzpicture}
	\begin{scope}[start chain]
	\dnode{1}
	\dnode{3}
	\dnode{4}
	\dnode{5}
	\dnode{6}
	\dnode{7}
	\dnode{8}
	\dnode{0}
	\dnode{\infty}
	\end{scope}
	\begin{scope}[start chain=br going above]
	\chainin(chain-3);
	\dnodebr{2}
	\end{scope}
	\end{tikzpicture}
	\end{equation}
	
	This time, we split the possible configurations for $J$ across the diagram in the case $0\not\in J$ into four possibilities. Let $ k $ be the minimal vertex with $ v_k>n \delta_k $. The possible configurations are:
	
	\begin{enumerate}[leftmargin=*,align=left]
	\item \emph{$ k>4 $ and all $ j\in J $ have $ j<k $,  or $k =0$.} By Lemma~\ref{lem:basic inequalities} and Lemma~\ref{lem:branching}, we find that $v_0>n\delta_0=n$. The same lemmas show that $\delta_{k+1}v_k     + 1 = \delta_kv_{k+1}$. Let us temporarily use the designation $9$ for the vertex marked $0$. By Lemma~\ref{lem:taken from CrawBoe}, we get
        \[2\delta_kv_k\le \delta_kv_{k+1}+\delta_kv_{k-1}\le \delta_{k+1}v_k+1+\delta_k n\delta_{k-1}\]
        implying $\delta_{k-1}(v_k-n\delta_k )\le1$. But this contradicts $v_k>n\delta_k$.
        
	\item \emph{$ k=4 $ and all $ j\in J $ have $ j<k $}: We have
        \[2v_4\le v_2 +v_3 +v_5\le n\delta_2 +n\delta_3 +v_5 = 7n+v_5.\]
        Since we also have (Lemma~\ref{lem:branching}) $5v_4 +1 =6v_5$, this implies that $7v_5-2\le 35n $. But since $v_5>5n$, this is impossible.
        
	\item \emph{$ k =2 $ and at least one of the vertices $1$ and $ 3 $ are in $J $}: By Lemma~\ref{lem:taken from CrawBoe}, we must have $ v_4\ge6n+1 $. By Lemma~\ref{lem:basic inequalities} applied to the vertex chain $ 1,3,4 $, we find $ 6v_3<4v_4 $. Then Lemma~\ref{lem:branching} shows that $ 6v_5 =5v_4+1 $. Now, if $ v_3\le n\delta_3 $, the same lemma and Lemma~\ref{lem:taken from CrawBoe} imply
        \[12v_4\le 6v_2+6v_3+6v_5\le 8v_4+24n+1 \] leading to $ 24n +4 \le 4v_4\le 24n+1 $, a contradiction. 
        So suppose that $ 1\in J $, and $ v_3> 4n $. By Lemma~\ref{lem:basic inequalities}, we get $ 6v_3<4v_4 $.  As above, we find
        \[ 12v_4\le 6v_2+6v_3+6v_5\le 8v_4 + 6v_3 +1 \]
	    leading to $ 4v_4\le 6v_3 +1 $. This implies that $4v_4 =6v_3+1$, which has no integer solutions. Hence we have a contradiction.
	    
	\item \emph{$ k=1 $ or $ k=3 $, and so $ J $ only consists of $ 2 $}:
		Suppose that $ v_2 =n\delta_2 = 3n $. Then, by Lemma~\ref{lem:branching} and Lemma~\ref{lem:basic inequalities}, we get $ v_4>4n $, say $ v_4 = 4n+t,\ t>0 $. But then Lemma~\ref{lem:branching} and Lemma~\ref{lem:taken from CrawBoe} give
        \[ 12v_4\le 6v_2 + 6v_3+6v_5 \le 18n+4v_4+5v_4+1 \]
        leading to $ 18n+3t=3v_4\le 18n+1 $, a contradiction.
	\end{enumerate}

\noindent Therefore Proposition~\ref{prop:dimensionEstimateStableRep} holds for $E_8$.
\end{proof}

\begin{proof}[Proof of Proposition~\ref{prop:dimensionEstimateStableRep} in the case $0\not\in J$]
Our case-by-case analysis is given in Lemmas~\ref{lem:final2cases}--\ref{lem:dimEstimateE8} where, as noted above, the $E_6$ and $E_7$ cases are similar to that of $E_8$ from Lemma~\ref{lem:dimEstimateE8}.
\end{proof}

\bibliographystyle{alpha}
\bibliography{hilbkleinian}

\end{document}